\newtheorem{thm}{Theorem}[section]
\newtheorem{lem}[thm]{Lemma}
\newtheorem{prp}[thm]{Proposition}
\theoremstyle{definition}
\newtheorem{rmk}{Remark}[section]
\numberwithin{equation}{section}
 \DeclareMathOperator{\RE}{Re}
  \newcommand{\tri}{\triangle}
 \newcommand{\eps}{\varepsilon}
 \newcommand{\vpi}{\varphi}
 \newcommand{\p}{\mathcal{P}}
 \newcommand{\D}{\mathcal{D}}
 \newcommand{\Real}{\mathbb{R}}
 \newcommand{\Natural}{\mathbb{N}}
 \newcommand{\nrho}{\inner{\frac{N}{\rho}}}
 \newcommand{\norm}[1]{\left\Vert#1\right\Vert}
 \newcommand{\bignorm}[1]{\big\Vert#1\big\Vert}
 \newcommand{\abs}[1]{\left\vert#1\right\vert}
 \newcommand{\set}[1]{\left\{#1\right\}}
 \newcommand{\bigcom}[1]{\big[#1\big]}
 \newcommand{\bigset}[1]{\big\{#1\big\}}
 \newcommand{\inner}[1]{\left(#1\right)}
 \newcommand{\biginner}[1]{\big(#1\big)}
 \newcommand{\Biginner}[1]{\Big(#1\Big)}
 \newcommand{\dol}[1]{$#1$}
 \newcommand{\mc}[1]{{\mathcal #1}}
 \newcommand{\bb}[1]{{\mathbb #1}}
 \newcommand{\reff}[1]{(\ref{#1})}
\newcommand\cD{\mathcal D}
\newcommand\cS{{\mathcal S}}
\newcommand\NN{{\mathbb N}}
\newcommand\RR{{{\mathbb R}}}
\begin{document}

\title[The Gevrey hypoellipticity for kinetic equations]
{Gevrey hypoellipticity for a class\\
of kinetic equations}
\author[H. Chen]{Hua CHEN}
\address{School of Mathematics and Statistics, Wuhan University,
Wuhan 430072, China} \email{chenhua@whu.edu.cn}
\author[W.-X. Li]{Wei-Xi LI}
\address{School of Mathematics and Statistics, Wuhan University,
Wuhan 430072, China} \email{wei-xi.li@whu.edu.cn}
\author[C.-J. Xu]{Chao-Jiang XU}
\address{Universit\'e de Rouen, UMR 6085-CNRS, Math\'ematiques,
Avenue de l'Universit\'e, \newline \indent BP.12, 76801 Saint
Etienne du Rouvray, France
\newline
\indent and\newline \indent School of mathematics, Wuhan University,
430072, Wuhan, China} \email{Chao-Jiang.Xu\@@univ-rouen.fr}

\date{2008-03-14\,\,\,  Research supported partially by the NSFC grant 10631020}
\subjclass[2000]{35 B, 35 H, 35 N}

\keywords{Gevrey regularity, kinetic equation, microlocal analysis}

\begin{abstract}
In this paper, we study the Gevrey regularity of weak solutions for
a class of linear and semi-linear kinetic equations, which are the
linear model of spatially inhomogeneous Boltzmann equations without
an angular cutoff.
\end{abstract}

\maketitle

%%%%%%%%%%%%%%%%%%%%%%%%%%%%%%%%%%%%%%%%%%%%%%%%%%%%%%%%%%%%%%%
\section{Introduction}\label{sect1}

In this paper, we study  the following  kinetic operator:
\begin{equation}\label{Fokker-Planck}
\mc{P}=\partial_t+v\cdot\partial_x+a(t,x,v)(-\widetilde\triangle_v)^\sigma
,\quad(t,x, v)\in{\Real\times\Real^n\times\bb R^{n}},
\end{equation}
where $ 0<\sigma < 1$,
$v\cdot\partial_x=\Sigma_{j=1}^nv_j\partial_{x_j}$, $a(t,x,v)\in
C^\infty(\bb R^{2n+1})$ and $a(t,x,v)>0$ on
${\Real\times\Real^n\times\bb R^{n}}$, the notation
$(-\widetilde\triangle_v)^\sigma$ denotes the Fourier multiplier of
symbol
$$p(\eta)=\bigset{\abs\eta^{\sigma}\omega(\eta)+\abs\eta(1-\omega(\eta))}^2,$$
 with $\omega(\eta)\in C^\infty(\Real^n)$, $0\leq\omega\leq 1$.
 Moreover, we have
 $\omega=1$ if $\abs\eta\geq 2$ and $\omega=0$ if $\abs\eta\leq1$.
Throughout the paper, we denote by $\hat u(\tau,\xi,\eta)$ the
Fourier transform of $u$ with respect to the variables $(t,x,v)$.
$\mc{P}$ is not a classical pseudo-differential operator in $\bb
R^{2n+1}$; for the coefficient in the kinetic part is not bounded in
$\bb R^{2n+1}$. When $\sigma=1$, the operator (1.1) is the so-called
Vlasov-Fokker-Planck operator (see \cite{helffer-nier,herau-nier}),
it is then a H\"ormander type operators, and we can apply the Gevrey
hypoellipticity results of M. Derridj and C. Zuily \cite{DZ} and M.
Durand \cite{Dur}, see also [5] for the optimal $G^3$-hypoelliptic
results.

As  is well known, the operator (1.1) is a linear model of the
spatially inhomogeneous Boltzmann equation without an angular cutoff
(cf. \cite{MX}). This is the main motivation for the study of the
regularizing properties of the operator (1.1) in  this paper. In the
past several years,  a lot of progress has been made in the study of
the spatially homogeneous Boltzmann equation without an angular
cutoff, (see \cite{al-1,al-2,desv-wen1,villani} and references
therein), in which the authors have proved that the singularity of
the collision cross-section  yields certain gain on the regularity
for the weak solution of the Cauchy problem in the Sobolev space
frame. That implies that there exists a $C^\infty$ smoothness effect
of the Cauchy problem for the spatially homogeneous Boltzmann
equation without an angular cutoff. The Gevrey regularity of the
local solutions has been constructed in \cite{ukai} for the initial
data having the same Gevrey regularity, and the propagation of
Gevrey regularity is proved recently in \cite{DFT}. In \cite{MUXY1},
the Gevrey smoothness effect of the Cauchy problem has been
established for the spatially homogeneous linear Boltzmann equation.
In \cite{MX2}, they  obtain the ultra-analytical effect results for
the non linear homogeneous Landau equations and inhomogeneous linear
Landau equations.

However, there is no general result for the smoothness effect of the
spatially inhomogeneous problem, which is actually related with the
regularity of the kinetic equation with its diffusion part  a
nonlinear operator in the  velocity variable $v$. Under the
singularity assumption on the collision cross section, the behavior
of the Boltzmann collision operator is similar to a fractional power
of the Laplacian $\inner{-\tri _v}^\sigma.$ In \cite{aumxy}, by
using the uncertainty principle of the micro-local analysis,  the
authors obtained $C^\infty$ regularity for the weak solution of the
linear spatially inhomogeneous Boltzmann equation without an angular
cutoff.

On the other hand, in \cite{MX}, the existence and the $C^\infty$
regularity have been proved for the solutions of the Cauchy problem
for  linear and semi-linear equations associated with the kinetic
operators (\ref{Fokker-Planck}). In this paper, we shall consider
the Gevrey regularity for such  problems.

Let us first recall the definition for the  functions in the Gevery
class. Let $U$ be an open subset of $\mathbb{R}^d$ and $1\leq
s<+\infty$, we say that $f\in G^s(U)$ if $f\in C^\infty(U)$ and for
any compact subset $K$ of $U$, there exists a constant (say Gevrey
constant of $f$) $C=C_K$, depending only on $K$ and $f$, such that
for all multi-indices $\alpha\in\mathbb{N}^d$,
\begin{eqnarray}\label{gevrey}
\|\partial^\alpha{f}\|_{L^\infty(K)} \leq
C_K^{|\alpha|+1}(\alpha!)^s.
\end{eqnarray}
If $W$ is a closed subset of $\bb{R}^d$, $G^s(W)$ denote the
restriction of $G^s(\tilde{W})$ on $W$ where $\tilde{W}$ is an open
neighborhood of $W$. The condition {\rm (\ref{gevrey})} is
equivalent to the following estimate (e.g. see \cite{CR} or
\cite{Ro}):
$$
\|\partial^\alpha{f}\|_{L^2(K)}\leq C_K^{|\alpha|+1}(|\alpha|!)^{s}.
$$

We say that an operator $P$ is $G^s$ hypoelliptic in $U$ if $u\in
\D'(U)$ and $ Pu\in G^s(U)$, then it follows that  $u\in G^s(U).$
Likewise, we say that  the operator $P$ is $C^\infty$ hypoelliptic
in $U$ if $u\in \D'(U)$ and $Pu\in C^\infty(U)$, then it follows
that  $u\in C^\infty(U).$

\vspace{0.5ex}In \cite{MX}, Morimoto-Xu  proved that the operator
(1.1) is $C^\infty$ hypoelliptic if $1/3<\sigma\leq 1$. Our first
main result of this paper is the following:

\begin{thm}\label{th2}
Let $0<\sigma<1$ and
$\delta=\max\set{\frac{\sigma}{4},~\frac{\sigma}{2}-\frac 1 6}$.
Then the operator $\mc{P}$ given by (\ref{Fokker-Planck}) is $G^s$
hypoelliptic in $\mathbb{R}^{2n+1}$ for any $s\geq \frac{2}{\delta}$
, provided the coefficient $ a(t,x,v) \in G^s(\mathbb{R}^{2n+1})$
and $a(t,x,v)>0$.
\end{thm}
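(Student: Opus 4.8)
The plan is to establish Gevrey regularity in two stages: first a microlocal/a priori energy estimate that captures the hypoelliptic gain of $\mc{P}$, and then an inductive control of high-order derivatives using that gain together with the Gevrey hypothesis on the coefficient $a$. The starting point is the subelliptic estimate associated with the operator $\partial_t+v\cdot\partial_x+a(-\widetilde\triangle_v)^\sigma$. Because $a>0$ and smooth, the diffusion part $a(-\widetilde\triangle_v)^\sigma$ gives control of $\|\langle D_v\rangle^\sigma u\|$; the transport part $\partial_t+v\cdot\partial_x$ then, via a commutator argument of H\"ormander type (writing $\partial_x = [\,\partial_{v_j},\,\partial_t+v\cdot\partial_x\,]$), trades derivatives in $v$ for fractional derivatives in $(t,x)$. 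Quantitatively one expects a gain of order $\delta$ in the variables $(t,x)$ and of order $\sigma$ in $v$ from one application of $\mc{P}$, where $\delta=\max\{\sigma/4,\ \sigma/2-1/6\}$ is exactly the exponent arising from interpolating the commutator estimate (the two expressions corresponding to the two relevant ranges of $\sigma$, cf. the $C^\infty$-hypoellipticity threshold $\sigma>1/3$ in Morimoto--Xu). I would first isolate this as a quantitative a priori estimate, uniform under localization, of the schematic form $\|\Lambda^\delta_{t,x}\varphi u\|+\|\Lambda^\sigma_{v}\varphi u\| \lesssim \|\varphi \mc{P}u\| + \|\psi u\|$ for cutoffs $\varphi\prec\psi$, with explicit track of the constants' dependence on the cutoffs.

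**Next**, I would run the Gevrey iteration. Fix a compact $K$ and nested cutoffs; the goal is to bound $\|\partial^\alpha(\varphi u)\|_{L^2}$ by $C^{|\alpha|+1}(|\alpha|!)^s$ with $s=2/\delta$. Apply the a priori estimate to $\varphi\partial^\beta u$: this produces $\mc{P}\varphi\partial^\beta u = \varphi\partial^\beta\mc{P}u + [\mc{P},\varphi\partial^\beta]u$, and the commutator splits into a part where derivatives land on the cutoff (handled by the standard Ehrenpreis--type families of cutoffs, $|\partial^\gamma\varphi|\le C^{|\gamma|+1}|\alpha|^{|\gamma|}$ on the relevant scale) and a part where derivatives land on the coefficient $a$, namely terms like $\sum_{\gamma\le\beta}\binom{\beta}{\gamma}(\partial^\gamma a)(-\widetilde\triangle_v)^\sigma\partial^{\beta-\gamma}u$. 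The Gevrey bound on $a$, $\|\partial^\gamma a\|_{L^\infty}\le C^{|\gamma|+1}(\gamma!)^s$, is what makes this sum summable against the inductive hypothesis. The delicate bookkeeping is that each application of the estimate only gains $\delta$ in $(t,x)$ and $\sigma$ in $v$, so to absorb a full derivative $\partial_{t,x}$ one needs $\lceil 1/\delta\rceil$ iterations, which is precisely why the Gevrey index is $s=2/\delta$ rather than $1/\delta$ — the extra factor $2$ comes from the loss in estimating a fractional power $(-\widetilde\triangle_v)^\sigma$ of a product by the Leibniz rule in the nonlocal setting, where one pays an additional derivative.

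**The main obstacle** will be controlling the nonlocal commutator $[(-\widetilde\triangle_v)^\sigma,\ a]$ and more generally $\partial^\gamma$ acting on the product $a\,(-\widetilde\triangle_v)^\sigma u$ with constants of Gevrey type: since $(-\widetilde\triangle_v)^\sigma$ is a Fourier multiplier, the Leibniz expansion is not finite, and one must use a symbolic/paradifferential decomposition or a Fourier-side estimate, keeping the number of resulting terms and their combinatorial weights compatible with an $(|\alpha|!)^s$ bound. I would handle this by a dyadic decomposition in $\eta$ separating the region $|\eta|\le 2$ (where $p(\eta)$ is smooth and bounded, contributing harmless lower-order terms because of the cutoff $\omega$) from $|\eta|\ge 2$ (genuine fractional Laplacian $|\eta|^{2\sigma}$), and then estimating $\|\partial^\gamma a\cdot \Lambda^{2\sigma}_v\partial^{\beta-\gamma}u\|$ by passing the $\gamma$-derivatives onto $a$ via the Gevrey bound and distributing $\Lambda^{2\sigma}_v$ onto the $u$-factor. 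A secondary technical point is the semilinear case alluded to in the introduction, but the statement of Theorem~\ref{th2} is for the linear operator $\mc{P}$, so the iteration above suffices; one only needs to verify that the constants produced at each step form a geometric series whose ratio can be made $<1$ by choosing $K$ and the cutoff scale appropriately, which closes the induction.
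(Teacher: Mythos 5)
Your two-stage plan (a subelliptic a priori estimate with gain $\delta$, followed by a Gevrey iteration with nested Ehrenpreis-type cutoffs and a Leibniz expansion of $[\mc P,\Phi_{\rho,N}D^\alpha]$) is exactly the scaffold the paper uses: Proposition~\ref{prp2} gives $\|f\|_{r+\delta}\lesssim\|\mc Pf\|_r+\|f\|_0$, Section~\ref{sect+3} provides the commutator bounds with the cutoffs, and Section~\ref{sect3+} runs the induction. Two comments on the details.

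First, for the commutator $[\widetilde\Lambda_v^\sigma,\varphi_{\rho,N}]$ the paper does not use a dyadic decomposition but the singular integral representation of $|D_v|^\sigma$ (formula \reff{FourierM}, following Durand), which delivers the clean quantitative bounds of Lemma~\ref{+lem9}, e.g. $\|[\widetilde\Lambda_v^\sigma,\varphi_{\rho,N}]f\|_\kappa\lesssim(N/\rho)^\sigma\|f\|_\kappa+(N/\rho)^{\kappa+\sigma}\|f\|_0$. Your Littlewood--Paley route could be made to work, but you would still need exactly this type of inequality with explicit powers of $N/\rho$, so it is not just a matter of separating $|\eta|\le2$ from $|\eta|\ge2$.

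Second, and more importantly, your explanation for the index $s=2/\delta$ — ``the extra factor $2$ comes from the loss in estimating a fractional power by the nonlocal Leibniz rule, where one pays an additional derivative'' — does not match the actual mechanism. In the paper, the inductive estimate $(E)_{r,N}$ deliberately tracks \emph{two} quantities at offset Sobolev levels, $\|\Phi_{\rho,N}D^\alpha u\|_{r+n+1}$ and $\|\Phi_{\rho,N}\widetilde\Lambda^\sigma D^\alpha u\|_{r-\delta/2+n+1}$. Applying the subelliptic estimate to $\mc P\Phi_{\rho,N}D^\alpha u$ at level $-\delta/2+n+1$ gains $\delta$ and brings $\Phi D^\alpha u$ to level $\delta/2+n+1$, while the Cauchy--Schwarz step (Step~4 of Lemma~\ref{r1/3}) pins $\widetilde\Lambda^\sigma\Phi D^\alpha u$ at the midpoint $n+1$. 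So the parameter $r$ only advances by $\delta/2$ per iteration. Since differentiating the cutoff once per step costs a factor $(N/\rho)$, closing the induction requires $(N/\rho)^1\le(N/\rho)^{s\delta/2}$, i.e.\ $s\ge2/\delta$; the double-commutator loss $(N/\rho)^{2\sigma}$ from $[\widetilde\Lambda^\sigma_v,[\widetilde\Lambda^\sigma_v,\varphi_{\rho,N}]]$ is then also absorbed because $2\sigma<2\le s\delta$. So the factor $2$ comes from the halved step size $\delta/2$ forced by tracking $\widetilde\Lambda^\sigma D^\alpha u$ at the intermediate level, not from ``paying an extra derivative'' in the nonlocal Leibniz rule. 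Without this structural device your sketch would not close with the stated exponent, so I would encourage you to set up the inductive quantity with both terms from the start.
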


Compared with what is obtained  in \cite{MX}, the result of Theorem
1.1 implies that the operator (1.1) is also $C^\infty$ hypoelliptic
in the case of $0<\sigma\leq 1/3$.

Next, we consider the following semi-linear equation:
\begin{equation}\label{++1.2}
\partial_t u+v \cdot \nabla _x u+ a
 (-\widetilde\triangle_v)^{\sigma}u=F(t, x, v;u)
\end{equation}
where $F$ is a nonlinear function of the real variables $(t, x,
v,q)$. The following is the second main result of the paper, which
implies that the weak solution of equation (1.3) has Gevrey
regularity:

\begin{thm}\label{th3}

Let $0<\sigma<1$ and
$\delta=\max\set{\frac{\sigma}{4},~\frac{\sigma}{2}-\frac 1 6}$.
Suppose that $u\in L_{loc}^{\infty}(\bb R^{2n+1})$ is a weak
solution of Equation (\ref{++1.2}). Then $u\in
G^s(\mathbb{R}^{2n+1})$ for any $s\geq \frac{2}{\delta}$, provided
that the coefficient $ a\in G^s(\mathbb{R}^{2n+1})$, $a(t,x,v)>0$
and the nonlinear function $ F(t, x, v, q)\in
G^s(\mathbb{R}^{2n+2}).$
\end{thm}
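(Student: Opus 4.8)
\medskip
\noindent We sketch how the proof goes. The conclusion being local, it suffices to obtain $u\in G^s$ on a neighbourhood of an arbitrary point $z_0=(t_0,x_0,v_0)$. The plan is to freeze the nonlinearity: writing $g:=F(t,x,v;u)$, the function $u$ solves the \emph{linear} equation $\mc P u=g$ with precisely the operator of \reff{Fokker-Planck}, and $g$ will be shown to inherit the regularity of $u$ through the composition $F(\cdot\,;u)$. So the two tools are (i) the quantitative subelliptic and Gevrey a priori estimates for $\mc P$ underlying Theorem \ref{th2}, and (ii) Gevrey composition (Fa\`{a} di Bruno) estimates for $F(\cdot\,;u)$. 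The argument runs in two stages: first lift the weak solution to $C^\infty$, then run an induction on the order of derivatives up to $G^s$.

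First I would prove $u\in C^\infty$ near $z_0$. The subelliptic estimate that yields the $C^\infty$ hypoellipticity of $\mc P$ for all $0<\sigma<1$ (the $\sigma\le 1/3$ extension noted after Theorem \ref{th2}, together with \cite{MX} for the remaining range) gives a fixed, possibly anisotropic, gain $\eps_0>0$: for $\chi\in C_0^\infty$ near $z_0$, $\mc P u\in H^r_{loc}$ implies $\chi u\in H^{r+\eps_0}_{loc}$. Since $u\in L^\infty_{loc}\hookrightarrow H^0_{loc}$ and $F$ is smooth, $g=F(\cdot\,;u)\in L^\infty_{loc}\subset H^0_{loc}$, hence $\chi u\in H^{\eps_0}_{loc}$; iterating, and using Moser-type estimates (the $L^\infty$ bound on $u$ lets one pass the Sobolev threshold $H^{(2n+1)/2}$) to carry $g\in H^r_{loc}$ over from $u\in H^r_{loc}$, one reaches $u\in H^r_{loc}$ for all $r$, i.e.\ $u\in C^\infty$ near $z_0$, whence $g\in C^\infty$ there. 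Difference quotients justify applying the a priori estimates along the way.

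Next I would rerun the induction behind Theorem \ref{th2}, now with the datum $g$. Fix a compact neighbourhood $K$ of $z_0$ and Ehrenpreis-type cut-offs $\varphi_N\in C_0^\infty$, $\varphi_N\equiv1$ near $K$, with $\norm{\partial^\beta\varphi_N}_{L^\infty}\le C^{|\beta|+1}N^{|\beta|}$ for $|\beta|\le N$. The core estimate of Theorem \ref{th2} bounds, for $|\alpha|=N$, the relevant anisotropic norm of $\varphi_N\partial^\alpha u$ by a constant times the same norm of $\varphi_N\partial^\alpha(\mc P u)$ plus commutator terms in $\partial^\beta u$, $|\beta|<N$, organized so that the bound $\norm{\partial^\alpha u}\le A^{|\alpha|+1}(|\alpha|!)^s$ with $s\ge 2/\delta$ closes inductively ($2/\delta$ being dictated by the subelliptic gain $\delta$ of $\mc P$). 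The only new quantity is $\varphi_N\partial^\alpha g=\varphi_N\partial^\alpha\bigl(F(\cdot\,;u)\bigr)$, which by the Fa\`{a} di Bruno formula expands as
\[
\partial^\alpha\bigl(F(t,x,v;u)\bigr)
=\sum(\partial_q^{\,j}\partial^{\gamma}_{(t,x,v)}F)(t,x,v;u)\prod_{i=1}^{j}\partial^{\beta_i}u ,
\]
summed over $\gamma+\beta_1+\dots+\beta_j=\alpha$ with $\beta_i\neq0$, plus the term $j=0$, $\gamma=\alpha$. The single term $j=1$, $\gamma=0$, $\beta_1=\alpha$, namely $(\partial_qF)(\cdot\,;u)\,\partial^\alpha u$, is a zeroth order term in the degenerate directions and is absorbed exactly as the zeroth order commutator terms already present in the proof of Theorem \ref{th2} (using the subelliptic gain, after slightly enlarging the cut-off). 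In all remaining terms $|\beta_i|\le N-1$, so the induction hypothesis controls each factor $\partial^{\beta_i}u$ while $F\in G^s(\RR^{2n+2})$ controls $(\partial_q^{\,j}\partial^\gamma F)(\cdot\,;u)$; the standard combinatorial summation, convergent because $s>1$, then gives $\norm{\varphi_N\partial^\alpha g}\le B^{|\alpha|+1}(|\alpha|!)^s$ with $B$ depending only on the Gevrey constants of $F$ and of $u$ at orders $<N$. Feeding this into the estimate of Theorem \ref{th2} closes the induction and yields $u\in G^s(\RR^{2n+1})$.

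The hard part will be to keep the degenerate, anisotropic bookkeeping of the second stage compatible with the Fa\`{a} di Bruno combinatorics: one must split $\partial^\alpha$ according to the variables differentiated, balance the loss in each block against the exponent $2/\delta$, re-derive the commutator estimates of Theorem \ref{th2} in a form robust enough to carry the extra nonlinear terms, and check that distributing derivatives among several factors $u$ never costs more than the Gevrey weights can afford — in particular that $(\partial_qF)(\cdot\,;u)\,\partial^\alpha u$ is the only Fa\`{a} di Bruno term that has to be absorbed rather than estimated. I would expect essentially all of the difficulty to be in this quantitative re-derivation, and none of it in a new conceptual ingredient.
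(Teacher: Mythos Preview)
Your proposal is correct and follows the same two-stage strategy as the paper (Section~5): bootstrap $u$ to $C^\infty$ via the subelliptic estimate and composition stability (Proposition~\ref{+smooth}), then rerun the Gevrey induction of the linear case with the Fa\`a di Bruno expansion controlling $D^\alpha F(\cdot;u)$ (Proposition~\ref{prp4'}, the combinatorics being packaged into a Friedman-type lemma on monotone sequences, Lemma~\ref{+Friedman}). The only organizational difference is that the paper, following Durand, carries an auxiliary parameter $r\in[0,1]$ and first proves the $r=0$ estimate at level $N$ from the $r=1$ estimate at level $N-1$ by pure cut-off manipulation (no use of the equation), so that the top-order Fa\`a di Bruno term $(\partial_q F)\,D^\alpha u$ is already controlled at a lower norm when the subelliptic estimate is invoked, rather than absorbed by interpolation as you propose; both devices work.
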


\begin{rmk}
Our results here are  local  interior regularity results. This
implies that if there exists a weak solution in $\D'$, then the
solution is in Gevrey class in the interior of the domain. Thus, the
interior regularity of a weak solution does not depend much on the
regularity of the initial Cauchy data. Also, without loss of
generality, we can assume that $a(t,x,v)\geq c_0>0$ for all $(t, x,
v)\in\RR^{2n+1}$ with $c_0$ a positive constant, and all derivatives
of the coefficient $a$ are bounded in $\RR^{2n+1}$.
\end{rmk}

The paper is organized as follows: in section \ref{sect2}, we prove
that $\mc P$ is  subelliptic by using the method of  subelliptic
multiplier developed by J. Kohn [\ref{Kohn}]. Section \ref{sect+3}
is devoted to the study of  the commutator of
$(-\widetilde\triangle_v)^\sigma$ with the cut-off function in the
$v$ variable. In section \ref{sect3+}, we use the subelliptic
estimates to prove the Gevrey hypoellipticity of the operator $\mc
P$. Section \ref{sect4} is devoted to the proof of the Gevrey
regularity for the weak solution of the semilinear kinetic equation
(1.3).

%%%%%%%%%%%%%%%%%%%%%%%%%%%%%%%%%%%%%%%%%%%%%%%%%%%%%%%%%%%%%%%%%%%%%
%%%%%%%%%%%%%%%%%%%%%%%%%%%%%%%%%%%%%%%%%%%%%%%%%%%%%%%%%%%%%%%%%%%%%

\section{Subelliptic estimates}
\label{sect2} \setcounter{equation}{0}

In this paper, the notation, $\|\cdot\|_\kappa, \kappa\in\mathbb R,$
is used for the classical Sobolev norm in $H^\kappa(\mathbb
R^{2n+1})$, and
 $(f,~g)$ is the inner product of $f, g\in L^2(\bb R^{2n+1})$. Moreover
 if $f,~g\in C_0^\infty(\bb R^{2n+1}),$ it is easy to see that
\begin{eqnarray}\label{Young}
|(f,~g)|\leq \|f\|_\kappa \|g\|_{-\kappa}\leq
\frac{\varepsilon\|f\|_\kappa^2}{2}+\frac{\|g\|_{-\kappa}^2}{2\varepsilon}.
\end{eqnarray}

We have also the interpolation inequality in Sobolev space: For any
$\varepsilon>0$ and $r_1< r_2<r_3,$
\begin{eqnarray}\label{interpolation}
\|f\|_{r_2}\leq \varepsilon
\|f\|_{r_3}+\varepsilon^{-(r_2-r_1)/(r_3-r_2)}\|f\|_{r_1}.
\end{eqnarray}

Let $\Omega$ be an open subset of $\bb R^{2n+1}$ and $S^m(\Omega),
m\in \bb R,$ be the symbol space of the classical
pseudo-differential operators (when there is no risk to cause the
confusion, we will simply write $S^m$ for $S^m(\Omega)$). We say
$P=P(t,x,v,D_t,D_x,D_v)\in{\rm Op}(S^m)$ to be a pseudo-differential
operator of order $m,$ if its symbol $p(t,x,v;\tau,\xi,\eta)\in
S^m.$ If $P\in{\rm Op}(S^m)$, then $P$ is a continuous operator from
$H_{c}^\kappa(\Omega)$ to $H_{loc}^{\kappa-m}(\Omega)$, where
$H_{c}^\kappa(\Omega)$ is the subspace of $H^\kappa(R^{2n+1})$ which
consists of the distributions having their compact support in
$\Omega$. $H_{loc}^{\kappa-m}(\Omega)$ consists of the distributions
$h$ such that $\phi h\in H^{\kappa-m}(\bb R^{2n+1})$ for any
$\phi\in C_0^\infty(\Omega)$. For more details on the
pseudo-differential operators, we refer to Treves [\ref{Treves}].
Observe that if $P_1\in {\rm Op}(S^{m_1})$, $P_2\in {\rm
Op}(S^{m_2})$, then $[P_1,~P_2]\in {\rm Op}(S^{m_1+m_2-1}).$

\bigbreak
 We study now the operator $\mc P$ given by
(\ref{Fokker-Planck}). For simplicity, we introduce the following
notations
$$
{\widetilde\Lambda}^{\sigma}_v=(-\widetilde\triangle_v)^{\frac{\sigma}{2}},
\hspace{0.3cm}X_0=\partial_t+v\cdot\partial_x ,
\hspace{0.3cm}X_j=\partial_{v_j},  j=1, \cdots, n ,
$$
$$
\Lambda^\kappa=(1+|D_{t}|^2+|D_{x}|^2+|D_{v}|^2)^{\kappa/2}.
$$
Then $\mc P$ can be written as
$\mc{P}=X_0+a(t,x,v){\widetilde\Lambda}^{2\sigma}_v,$ and
$\partial_{x_j}=[X_j,~X_0].$ The following simple fact is used
frequently: For any compact $K\subset {\mathbb R^{2n+1}} $ and
$r\geq 0$, there exists $C_{K, r}>0$ such that for any $f\in
C_0^\infty(K),$
\begin{eqnarray}\label{T}
\|{\widetilde\Lambda}^{\sigma}_v f\|_r \leq C_{K, r}\bigset{\|\mc
Pf\|_r+\|f\|_r}.
\end{eqnarray}
In fact, a simple computation gives that
\begin{eqnarray*}
\|{\widetilde\Lambda}^{\sigma}_v f\|_r^2 &=&\RE(\mc Pf,
~a^{-1}\Lambda^{2r}f)-
\RE(X_0f, ~a^{-1}\Lambda^{2r}f)\\
&=&\RE(\mc Pf, ~a^{-1}\Lambda^{2r}f)- {\frac 1 2}(f,~
[a^{-1}\Lambda^{2r},~\widetilde{X}_0]f)-
{\frac 1 2}(f, ~[\Lambda^{2r},~a^{-1}]~\widetilde{X}_0f)\\
&\leq&C_{K, r}\bigset{\|\mc Pf\|_r+\|f\|_r},
\end{eqnarray*}
where $\widetilde{X}_0=\partial_t+\tilde\psi(v) v\cdot\partial_x$
and $\tilde\psi \in C^\infty_0(\RR^n_v)$ is a cutoff function in the
$v$ variable such that $\tilde\psi=1$ in the projection of $K$ on
$\RR^n_v$. Remark that, with the choice of  such a cutoff function,
we have that
$$
\widetilde{X}_0 P(t, x, v, D_t, D_x, D_v) f={X}_0 P(t, x, v, D_t,
D_x, D_v) f
$$
for any $f\in C_0^\infty(K)$ and any partial differential operator
$P(t, x, v, D_t, D_x, D_v)$.

\bigskip

First we show  $\mc P$ is a subelliptic operator on $\RR^{2n+1}$
with a gain of order
$\delta=\max\set{\frac{\sigma}{4},~\frac{\sigma}{2}-\frac 1 6}.$

\begin{prp}\label{prp2}

Let $K$ be a compact subset of $\bb{R}^{2n+1}.$  For any $r\geq 0,$
there exists a constant $C_{K,r},$ depending only on $K$ and $r,$
such that for any $f\in C_0^\infty(K),$
\begin{eqnarray}
\label{sub0} \|f\|_{r+\delta}\leq
C_{K,r}\{~\|\mc{P}f\|_r+\|f\|_0~\},
\end{eqnarray}
where $\delta=\max\set{\frac{\sigma}{4},~\frac{\sigma}{2}-\frac 1
6}.$

\end{prp}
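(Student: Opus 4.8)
The plan is to run Kohn's subelliptic multiplier scheme starting from the a priori estimate already available for $\mc P$. From the identity preceding the statement we have the basic estimate
\begin{eqnarray}\label{basic-plan}
\norm{{\widetilde\Lambda}^{\sigma}_v f}_r^2 + \norm{X_0 f}_{-\sigma+r}^2 \lesssim_{K,r} \norm{\mc Pf}_r^2+\norm{f}_r^2
\end{eqnarray}
for $f\in C_0^\infty(K)$ (the first term is (\ref{T}), and the control of $X_0f$ in $H^{-\sigma+r}$ comes by writing $X_0f=\mc Pf-a{\widetilde\Lambda}^{2\sigma}_v f$ and using that ${\widetilde\Lambda}^{\sigma}_v$ maps $H^r$ to $H^{r-\sigma}$ continuously on functions supported in $K$). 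So $\mc P$ controls one full ${\widetilde\Lambda}^{\sigma}_v$-derivative in $v$ and, microlocally where $\abs\xi$ dominates, the operator $X_0$ behaves like $iv\cdot\xi$, which is elliptic in the $x$-direction away from the characteristic set $v\cdot\xi=0$. The point of the multiplier method is to recover a genuine gain $\norm{f}_{r+\delta}$ in all variables by iterating: commuting $X_j=\partial_{v_j}$ against $X_0$ produces $\partial_{x_j}=[X_j,X_0]$, so two "half-derivatives" in $v$ (cost $\sigma/2$ each, roughly) plus the good directions of $X_0$ buy a derivative in $x$; combined with $X_0$ itself one then gets $\partial_t$. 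Bookkeeping the exponents is exactly what produces $\delta=\max\{\sigma/4,\ \sigma/2-1/6\}$: the first branch $\sigma/4$ is the naive gain from splitting one $v$-derivative across the $[X_j,X_0]$ bracket, and the second branch $\sigma/2-1/6$ comes from a sharper interpolation (a $\tfrac13$-vs-$\tfrac23$ split, the same numerology behind the threshold $\sigma>1/3$ in \cite{MX}) that wins when $\sigma$ is not too small.

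Concretely I would proceed in the following steps. First, fix a conic partition of unity in the cotangent variables $(\tau,\xi,\eta)$ separating three regions: (i) $\abs\eta$ dominant, where ${\widetilde\Lambda}^{\sigma}_v\sim\abs\eta^\sigma$ is already elliptic of order $\sigma$ and (\ref{basic-plan}) gives the gain outright; (ii) $\abs\xi$ dominant, which is the main region; (iii) $\abs\tau$ dominant, handled at the end using $\partial_t=X_0-v\cdot\partial_x$ together with the bounds on $X_0 f$ and on $\partial_{x_j}f$ obtained in region (ii). Second, in region (ii) introduce the first-order (in the calculus where ${\widetilde\Lambda}^{\sigma}_v$ counts as order $\sigma$) multiplier and compute $\norm{\partial_{x_j}f}^2_{s}=([X_j,X_0]f, \Lambda^{2s}\partial_{x_j}f)$ for a suitable small $s$; expand the bracket, move $X_0$ onto the other factor using its own a priori control, and estimate the commutator $[X_j,X_0]=\partial_{x_j}$ error terms and the $[\Lambda^{2s},\cdot]$ terms by the pseudodifferential order count $[P_1,P_2]\in{\rm Op}(S^{m_1+m_2-1})$ noted in the text. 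Third, absorb the resulting ${\widetilde\Lambda}^{\sigma}_v$-factors using (\ref{basic-plan}) and the interpolation inequality (\ref{interpolation}) to convert intermediate-order norms into $\eps\norm{f}_{r+\delta}+C_\eps\norm{f}_0$, then choose $\eps$ small to absorb the $\eps\norm{f}_{r+\delta}$ into the left side. Fourth, reassemble the three regions and iterate the estimate a fixed finite number of times to promote the low-regularity gain to the stated $\norm{f}_{r+\delta}$ for every $r\geq 0$ (the iteration is needed because a single pass only gains $\delta$ starting from $\norm f_0$, and one must feed that back in to reach general $r$).

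I expect the main obstacle to be the precise exponent accounting in step two: one has to split a single available $v$-derivative of strength $\sigma$ between the two slots created by $[X_j,X_0]f$ and by the test function $\Lambda^{2s}\partial_{x_j}f$, and decide how much of the "ellipticity of $X_0$ in the $\xi$-direction" (which is only first order, not elliptic in the full symbol sense) to spend. The honest gain is $\delta$, and getting the sharp constant $\max\{\sigma/4,\sigma/2-1/6\}$ rather than a weaker power requires using the refined interpolation between the ${\widetilde\Lambda}^{\sigma}_v$-control and the $X_0$-control with the optimal weights — this is where the $-1/6$ appears — while simultaneously keeping every commutator remainder of lower order so it can be absorbed. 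Everything else (the conic localization, the $\partial_t$ region, the final iteration) is routine once the core region-(ii) estimate is in hand.
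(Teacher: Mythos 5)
Your plan is a reasonable heuristic sketch — the idea of exploiting $\partial_{x_j}=[X_j,X_0]$ and $\partial_t=X_0-\sum v_j[X_j,X_0]$, paired with a multiplier computation and a conic split, is the standard way to think about subellipticity of kinetic transport operators — but as written it contains a genuine gap precisely where you yourself locate "the main obstacle": you never actually produce the exponent $\delta$. The paper's proof rests on two nontrivial inputs that your plan has no substitute for. First, Lemma 2.2 (quoted from Prop.~3.1 of \cite{MX}) gives $\norm{\Lambda^{-1/3}X_0 f}_0\lesssim\norm{\mc Pf}_0+\norm f_0$ and $\norm{\Lambda^{-1}X_jf}_\sigma\lesssim\norm{\mc Pf}_0+\norm f_0$. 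Your estimate (\ref{basic-plan}) only gives $\norm{X_0f}_{-\sigma}\lesssim\norm{\mc Pf}_0+\norm f_0$, which is an immediate corollary of (\ref{T}); but note that the branch $\delta=\sigma/2-1/6$ wins exactly when $\sigma>2/3$, and in that regime $-\sigma<-2/3<-1/3$, so your bound on $X_0f$ is strictly weaker than the $-1/3$ estimate and cannot drive the sharper branch. The $-1/3$ gain is itself a subelliptic estimate, not a byproduct of (\ref{T}), and your proposal treats it as if it were trivial. Second, the paper's Lemma 2.3 establishes the commutator bounds $\norm{[X_j,\Lambda^{-1}\widetilde X_0]f}_{\sigma/2-1/6}\lesssim\norm{\mc Pf}_0+\norm f_0$ and $\norm{[\Lambda^{-1}X_j,\widetilde X_0]f}_{\sigma/4}\lesssim\norm{\mc Pf}_0+\norm f_0$; these are the computations that actually produce the two branches of $\delta$, and the case split in the paper's proof is governed by which of these two wins. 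Your "step two" is meant to replace them but is left as a placeholder ("one has to split... and decide how much... to spend").

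In short: the conic decomposition is a legitimate alternative framing, and the reduction of the general $r\geq 0$ case to $r=0$ by cutoffs, $\Lambda^r$, commutators and interpolation in your step four matches the paper. But the core of Proposition \ref{prp2} is not the bookkeeping around it, it is Lemmas 2.2 and 2.3, and neither appears in your proposal in any verifiable form. Without a concrete derivation of $\norm{\Lambda^{-1/3}X_0f}_0$ (or an equivalent) and of at least one of the two commutator estimates at the stated Sobolev orders, the claimed gain $\max\{\sigma/4,\sigma/2-1/6\}$ is asserted, not proved.
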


In order to prove Proposition \ref{prp2}, we need the following two
lemmas.

\begin{lem}\label{lem2}
Let $K$ be any compact subset of $\bb{R}^{2n+1}.$ Then for any $f\in
C_0^\infty(K),$ we have
\begin{equation}
\label{re2}
\begin{array}{l}
 \|\Lambda^{-1/3}X_0f\|_0 \leq
 C_K(~\|\mc{P}f\|_0+\|f\|_0~),\hspace{2.2cm}
\end{array}
\end{equation}
and
\begin{equation}
\label{re1}
\begin{array}{l}
\|\Lambda^{-1}X_jf\|_\sigma \leq
C_K(~\|\mc{P}f\|_0+\|f\|_0~),\hspace{0.5cm} j=1,\cdots ,n.
\end{array}
\end{equation}
\end{lem}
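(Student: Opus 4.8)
The plan is to extract both bounds directly from the basic identity already used to prove \eqref{T}, by pairing $\mc P f$ against a cleverly chosen test function rather than against $a^{-1}\Lambda^{2r}f$. Recall $\mc P f = X_0 f + a\,\widetilde\Lambda_v^{2\sigma} f$. For \eqref{re1}, I would take the $L^2$-inner product of $\mc P f$ with $a^{-1}\widetilde\Lambda_v^{2\sigma}\Lambda^{-2}X_j f$ (with the appropriate cutoff $\tilde\psi$ inserted as in the proof of \eqref{T} so that $X_0$ may be replaced by $\widetilde X_0$). The key point is that $\widetilde\Lambda_v^{2\sigma}\Lambda^{-2}X_j$ is, on the support of the cutoff, a pseudodifferential operator whose symbol behaves like $|\eta|^{2\sigma}\langle(\tau,\xi,\eta)\rangle^{-2}\eta_j$; commuting it past $a$ and past $X_0$ costs only lower order. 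The term coming from $a\,\widetilde\Lambda_v^{2\sigma}f$ produces essentially $\|\widetilde\Lambda_v^{\sigma}\Lambda^{-1}X_j f\|_\sigma^2$ up to controllable errors, and since on $\mathrm{supp}\,\tilde\psi$ the elliptic gain in $v$ gives $\|\Lambda^{-1}X_j f\|_\sigma \lesssim \|\widetilde\Lambda_v^{\sigma}\Lambda^{-1}X_j f\|_\sigma + \|f\|_0$, one closes the estimate after absorbing the $\widetilde\Lambda_v^\sigma f$ factors via \eqref{T} and \eqref{Young}. More directly: since $X_j = \partial_{v_j}$, the operator $\Lambda^{-1}X_j$ is of order $0$, so $\|\Lambda^{-1}X_j f\|_\sigma \le \|f\|_\sigma \lesssim \|\widetilde\Lambda_v^\sigma f\|_0 + \|f\|_0$ fails to give anything sharp; the genuine gain must come from using the equation, so the pairing above is the right device.

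For \eqref{re2} I would use the same identity but now test against $a^{-1}\Lambda^{-2/3}X_0 f$. Writing $\|\Lambda^{-1/3}X_0 f\|_0^2 = (X_0 f,\ a^{-1}\Lambda^{-2/3}\,a\,\Lambda^{-2/3}X_0 f)$ is awkward because of the $a$; instead write
\[
\|\Lambda^{-1/3}X_0 f\|_0^2 = (X_0 f,\ \Lambda^{-2/3}X_0 f) = (\mc P f,\ \Lambda^{-2/3}X_0 f) - (a\,\widetilde\Lambda_v^{2\sigma}f,\ \Lambda^{-2/3}X_0 f).
\]
The first term is bounded by $\|\mc P f\|_0\,\|\Lambda^{-2/3}X_0 f\|_0 \le \|\mc P f\|_0\,\|\Lambda^{1/3}f\|_0$ after moving $\Lambda^{-2/3}$ and one factor of $X_0$ around — but this overshoots, so the real argument must keep $\|\Lambda^{-1/3}X_0 f\|_0$ on the right and absorb. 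For the second term, I would integrate by parts to move $X_0$ onto the left factor: $-(a\,\widetilde\Lambda_v^{2\sigma}f,\ \Lambda^{-2/3}X_0 f) = (X_0(a\,\widetilde\Lambda_v^{2\sigma}f),\ \Lambda^{-2/3}f) + (\text{commutator terms})$; using $[\partial_{v_j},X_0]=\partial_{x_j}$ and $[X_0,\widetilde\Lambda_v^{2\sigma}]$ being lower order in $v$, the dangerous piece is $(a\,\widetilde\Lambda_v^{2\sigma}X_0 f,\ \Lambda^{-2/3}f)$, which I bound by $\|\widetilde\Lambda_v^{\sigma}X_0 f\|_{-1/3}\,\|\widetilde\Lambda_v^{\sigma}f\|_{1/3}$ and then relate $\|\widetilde\Lambda_v^{\sigma}X_0 f\|_{-1/3}$ back to the equation. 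The interpolation inequality \eqref{interpolation} is what converts the exponent $\sigma$ lost in $v$ plus the $-1$ from $\Lambda^{-1}$ into the claimed $-1/3$ (this is where the arithmetic $\sigma \cdot \tfrac{1}{?}$ producing $1/3$, $\sigma/2 - 1/6$, etc., enters).

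The main obstacle, as always in subelliptic-multiplier arguments, is bookkeeping the commutators: $\widetilde\Lambda_v^\sigma$ is a Fourier multiplier in $v$ only, so commuting it with the coefficient $a(t,x,v)$ is fine, but one must be careful that the $(-\widetilde\triangle_v)^\sigma$ has the non-homogeneous symbol $p(\eta)$ (it is $|\eta|^{2\sigma}$ only for $|\eta|\ge 2$), so all "gain" estimates are really up to an additive $\|f\|_0$ from the low-frequency part — this is harmless and is exactly why $\|f\|_0$ appears on the right-hand side of \eqref{re2}–\eqref{re1}. I expect the cleanest route is: (i) prove \eqref{re1} first, essentially by testing $\mc P f$ against $\widetilde\Lambda_v^{-2+2\sigma}\partial_{v_j}\langle D\rangle^{0}f$-type multipliers and using \eqref{T}; (ii) then combine \eqref{re1} with the identity $X_0 = \mc P - a\widetilde\Lambda_v^{2\sigma}$ and with \eqref{T} to get \eqref{re2}, since $\|\Lambda^{-1/3}X_0 f\|_0 \le \|\Lambda^{-1/3}\mc P f\|_0 + C\|\widetilde\Lambda_v^{\sigma}f\|_{\sigma-1/3}$ and the last norm is controlled by interpolating $\|\widetilde\Lambda_v^\sigma f\|_\sigma$ (handled by \eqref{T}) against $\|\widetilde\Lambda_v^\sigma f\|_{-1/3}$, the latter again via \eqref{T}. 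The delicate quantitative step is checking that these interpolation exponents are consistent with $\sigma<1$ so that no norm stronger than what \eqref{T} supplies is needed on the right.
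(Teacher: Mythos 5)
The paper does not actually prove Lemma~\ref{lem2}; it is imported verbatim with the remark ``This is the result of Proposition~3.1 in \cite{MX}.'' So there is no in-paper proof to compare against, and the review below addresses your sketch on its own merits.

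For \eqref{re1} your proposed multiplier is broken, and moreover the estimate is far simpler than you make it. Pairing $a\widetilde\Lambda_v^{2\sigma}f$ against $a^{-1}\widetilde\Lambda_v^{2\sigma}\Lambda^{-2}X_jf$ gives, up to commutators with $a$,
\[
\bigl(\widetilde\Lambda_v^{2\sigma}f,\ \widetilde\Lambda_v^{2\sigma}\Lambda^{-2}X_jf\bigr)
=\int p(\eta)^2\langle\zeta\rangle^{-2}(i\eta_j)\,|\hat f|^2\,d\zeta ,
\]
whose real part vanishes; this is \emph{not} $\|\widetilde\Lambda_v^\sigma\Lambda^{-1}X_jf\|_\sigma^2$ up to error. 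To get a square you would need a second factor of $X_j$, i.e.\ a multiplier of the shape $a^{-1}\Lambda^{2\sigma-2}X_j^2f$ as the paper uses in the proof of Lemma~\ref{mul}. But none of this machinery is needed for \eqref{re1}. Since $0<\sigma<1$, the pointwise bound $|\eta_j|\,\langle\zeta\rangle^{\sigma-1}\le |\eta|^\sigma+1$ holds for all $\zeta=(\tau,\xi,\eta)$ (split into $|\eta|\le1$ and $|\eta|>1$, using $\langle\zeta\rangle\ge|\eta|$ and $\sigma-1<0$), so
\[
\|\Lambda^{-1}X_jf\|_\sigma^2=\int |\eta_j|^2\langle\zeta\rangle^{2\sigma-2}|\hat f|^2\,d\zeta
\le C\bigl(\|\widetilde\Lambda_v^\sigma f\|_0^2+\|f\|_0^2\bigr),
\]
and \eqref{T} with $r=0$ finishes. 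You explicitly dismissed the ``elementary'' path by way of a false intermediate inequality $\|f\|_\sigma\lesssim\|\widetilde\Lambda_v^\sigma f\|_0+\|f\|_0$ (false because the left side sees $(t,x)$-frequencies); the point is not to pass through $\|f\|_\sigma$ at all but to use that the particular symbol $\eta_j\langle\zeta\rangle^{\sigma-1}$ is dominated by the kinetic weight.

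For \eqref{re2} your proposed route (ii) cannot close for the full range $0<\sigma<1$. Writing $X_0f=\mc Pf-a\widetilde\Lambda_v^{2\sigma}f$ and bounding $\|\Lambda^{-1/3}a\widetilde\Lambda_v^{2\sigma}f\|_0$ by $\|\widetilde\Lambda_v^\sigma f\|_{\sigma-1/3}$ is fine as far as it goes, but the interpolation you propose needs $\|\widetilde\Lambda_v^\sigma f\|_\sigma$, and \eqref{T} at order $r=\sigma$ only gives $\|\mc Pf\|_\sigma+\|f\|_\sigma$ on the right, which is strictly stronger than the target $\|\mc Pf\|_0+\|f\|_0$. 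Equivalently, at the symbol level you would need $|\eta|^{2\sigma}\langle\zeta\rangle^{-1/3}\lesssim|\eta|^\sigma+1$, i.e.\ $\sigma\le 1/3$. So this ``algebraic bookkeeping'' route is correct when $\sigma\le 1/3$ (where in fact $\|\Lambda^{-\sigma}X_0f\|_0\lesssim\|\mc Pf\|_0+\|f\|_0$ already holds and is stronger), but fails for $\sigma>1/3$. The $1/3$ in \eqref{re2} is $\sigma$-independent and comes from the Kohn-type subelliptic multiplier argument exploiting the Hörmander bracket $[\partial_{v_j},X_0]=\partial_{x_j}$ — this is the genuine content of Proposition~3.1 of \cite{MX}, and it is not reproduced by commutator bookkeeping built on \eqref{T} alone. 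Your sketch gestures at ``the delicate quantitative step,'' but that step is not a detail to be checked: it is the place the argument actually has to change, and the bracket structure is what supplies the missing $(t,x)$-regularity.
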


This is the result of Proposition 3.1 in \cite{MX}. The following
lemma is to estimate the commutators, which is different from the
calculation in \cite{MX} for the second part of the lemma.

\begin{lem}\label{mul}

Let $K$ be a compact subset of $\bb{R}^{2n+1}.$ Then for any $f\in
C_0^\infty(K),$ we have
\begin{equation}
\label{mul1}
\begin{array}{l}
\|[X_j,~\Lambda^{-1}{\widetilde{X}}_0]f\|_{\sigma/2-1/6} \leq
C_K(~\|\mc{P}f\|_0+\|f\|_0~),\hspace{0.5cm} j=1,\cdots ,n,
\end{array}
\end{equation}
and
\begin{equation}
\label{mul2}
\begin{array}{l}
 \|[\Lambda^{-1}X_j,~{\widetilde{X}}_0]f\|_{\sigma/4} \leq
 C_K(~\|\mc{P}f\|_0+\|f\|_0~),\hspace{0.5cm} j=1,\cdots ,n.
\end{array}
\end{equation}

\end{lem}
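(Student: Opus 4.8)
The plan is to estimate each commutator by expressing it in terms of the "good" quantities already controlled by Lemma~\ref{lem2}, namely $\widetilde\Lambda_v^\sigma f$ (via \eqref{T}) and the quantities $\Lambda^{-1/3}X_0 f$ and $\Lambda^{-1}X_j f$ in appropriate norms. The starting observation is the identity $\partial_{x_j}=[X_j,X_0]=[X_j,\widetilde X_0]$ on $C_0^\infty(K)$ (using the cutoff $\tilde\psi$), so after inserting $\Lambda^{-1}$ one is really estimating $\Lambda^{-1}\partial_{x_j}f$ up to lower order commutator terms coming from $[X_j,\Lambda^{-1}]$ and $[\Lambda^{-1}X_j,\cdot]$; since $[X_j,\Lambda^{-1}]\in\mathrm{Op}(S^{-2})$ and $\Lambda^{-1}$ commutes with the constant-coefficient pieces of $\widetilde X_0$, the main contributions come from the interaction of $X_j$ with the $v$-dependent coefficient $\tilde\psi(v)v\cdot\partial_x$ in $\widetilde X_0$.

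For \eqref{mul1}: I would write $[X_j,\Lambda^{-1}\widetilde X_0]f = \Lambda^{-1}[X_j,\widetilde X_0]f + [X_j,\Lambda^{-1}]\widetilde X_0 f$. The second term is of order $-2$ applied to $\widetilde X_0 f$, hence bounded by $\|\widetilde X_0 f\|_{-1}$, which is controlled by $\|\Lambda^{-1/3}X_0 f\|_0$ up to lower order, hence by the right-hand side via \eqref{re2}. The first term equals $\Lambda^{-1}\partial_{x_j}f$ (plus a term from differentiating $\tilde\psi(v)$ which is again lower order), and the point is to bound $\|\Lambda^{-1}\partial_{x_j}f\|_{\sigma/2-1/6}$, i.e.\ essentially $\|\partial_{x_j}f\|_{\sigma/2-1/6-1}$. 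Here one interpolates: $\partial_{x_j}=X_jX_0-X_0X_j$ type relations, or more precisely one uses that $\Lambda^{-1}\partial_{x_j}f$ gains half a derivative in $t,x$ by combining the bound on $\Lambda^{-1/3}X_0 f$ (the streaming direction) with the bound on $\widetilde\Lambda_v^\sigma f$ (the velocity-diffusion direction), exactly as in the standard hypoellipticity gain for kinetic operators: transporting along $X_0$ for "time" $\Lambda^{-1}$ against diffusion of strength $\widetilde\Lambda_v^{2\sigma}$ yields a gain of $\sigma/(1+\sigma)$-ish in $\xi$, and the specific exponent $\sigma/2-1/6$ is what the numerology of \eqref{re2}--\eqref{re1} produces. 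I would carry this out with a Fourier-side dyadic decomposition in $(\tau,\xi,\eta)$, splitting into the region where $|\eta|$ is comparable to $|\xi|^{1/(1+\sigma)}$ and its complement, bounding each piece by one of the two controlled quantities.

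For \eqref{mul2}: similarly $[\Lambda^{-1}X_j,\widetilde X_0]f = \Lambda^{-1}[X_j,\widetilde X_0]f + [\Lambda^{-1},\widetilde X_0]X_j f$. The first term is again $\Lambda^{-1}\partial_{x_j}f$ up to lower order; the second involves $[\Lambda^{-1},\widetilde X_0]\in\mathrm{Op}(S^{-1})$ applied to $X_j f$, so it is bounded by $\|\Lambda^{-1}X_j f\|_0$, controlled by \eqref{re1} (which gives even the stronger $\sigma$-norm). So the real content is once more the bound $\|\Lambda^{-1}\partial_{x_j}f\|_{\sigma/4}\le C_K(\|\mathcal Pf\|_0+\|f\|_0)$, and the two commutator estimates differ only in how much regularity of $\Lambda^{-1}\partial_{x_j}f$ one can afford to extract, $\sigma/2-1/6$ versus $\sigma/4$; note $\max$ of these two is exactly the subelliptic gain $\delta$, so Proposition~\ref{prp2} will follow by combining them.

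The main obstacle is the sharp interpolation estimate for $\Lambda^{-1}\partial_{x_j}f$ in a positive-order Sobolev norm: one must genuinely exploit the coupling between transport and fractional diffusion, and track the exponents carefully through the dyadic frequency decomposition so that the loss matches $\sigma/2-1/6$ and $\sigma/4$ respectively. The commutators with the cutoff $\tilde\psi(v)$ and the symbol-calculus remainders are routine (all are of strictly lower order and absorbed by the interpolation inequality \eqref{interpolation} together with the trivial bound $\|f\|_{\text{small}}\le\eps\|f\|_{r+\delta}+C_\eps\|f\|_0$), so they should not cause difficulty; the delicate point is purely the optimal gain in the $x$-direction.
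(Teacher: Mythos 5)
Your structural reduction is sound: since $X_j=\partial_{v_j}$ and $\Lambda^{-1}$ are both constant--coefficient Fourier multipliers they actually commute (so $[X_j,\Lambda^{-1}]=0$, not merely $\mathrm{Op}(S^{-2})$), and on $C_0^\infty(K)$ one has $[X_j,\widetilde X_0]=\partial_{x_j}$, so indeed both commutators in the lemma reduce to $\Lambda^{-1}\partial_{x_j}f$ plus terms that are controlled by \eqref{re1} via $[\Lambda^{-1},\widetilde X_0]\in\mathrm{Op}(S^{-1})$. Up to here you are consistent with the paper.

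The gap is that the entire content of the lemma --- the positive-order Sobolev bound on $\Lambda^{-1}\partial_{x_j}f$ with the precise exponents $\sigma/2-1/6$ and $\sigma/4$ --- is not proved: you state that you ``would carry this out with a Fourier-side dyadic decomposition'' and that ``the specific exponent \dots is what the numerology \dots produces,'' which is a restatement of what must be shown, not an argument. Nor does the sketch explain why the same quantity $\Lambda^{-1}\partial_{x_j}f$ should come with two \emph{different} exponents in \eqref{mul1} and \eqref{mul2}; the answer in the paper is that two genuinely different mechanisms are invoked. For \eqref{mul1} the paper writes $\|\Lambda^{-1}\partial_{x_j}f\|_{\sigma/2-1/6}^2$ as the inner product $(\Lambda^{-1}\partial_{x_j}f,\,Q_jf)$ with $Q_j=\Lambda^{\sigma-4/3}\partial_{x_j}$, expands $\Lambda^{-1}\partial_{x_j}=\Lambda^{-1}[X_j,X_0]$, integrates by parts, and pairs $\|\Lambda^{-1}X_0f\|_{2/3}=\|\Lambda^{-1/3}X_0f\|_0$ (the $2/3$ gain from \eqref{re2}) against $\|Q_jX_jf\|_{-2/3}\le\|\Lambda^{-1}X_jf\|_\sigma$ (from \eqref{re1}); the exponent $\sigma/2-1/6$ is exactly the balance of $\sigma-1/3$ at order $2(\sigma/2-1/6)$. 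For \eqref{mul2} the paper pairs against $P_{\sigma/2}=\Lambda^{\sigma/2}[\Lambda^{-1}X_j,\widetilde X_0]$ and, crucially, replaces $X_0$ by $\mathcal P-a\widetilde\Lambda_v^{2\sigma}$, which shifts the burden onto the $v$-diffusion and requires a further integration-by-parts bound for $\|\widetilde\Lambda^\sigma_v\Lambda^{\sigma/2}\Lambda^{-1}X_jf\|_0$; that is where the $\sigma/4$ comes from. Neither of these mechanisms is present in your proposal, and it is far from clear that a dyadic split at $|\eta|\sim|\xi|^{1/(1+\sigma)}$ would reproduce these two exponents without re-deriving Lemma~\ref{lem2}--type estimates from scratch. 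As written, the proposal identifies the target but does not reach it.
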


\begin{proof} We denote
$Q_j=\Lambda^{\sigma-1/3-1 }[X_j, {X}_0]=
\Lambda^{\sigma-1/3-1}\partial_{x_j}\in {\rm Op}(S^{\sigma-1/3}).$
Note that $[X_k,~Q_j]=0$ for any $1\leq k\leq n$. Therefore for any
$f\in C_0^\infty(K),$
\begin{align*}
&\|[X_j,~\Lambda^{-1}\widetilde{X}_0]f\|_{\sigma/2-1/6}^2=
\|[X_j,~\Lambda^{-1}X_0]f\|_{\sigma/2-1/6}^2\\
&\leq |(X_j\Lambda^{-1}X_0f,~Q_jf)|+|(\Lambda^{-1}{\widetilde{X}}_0
 X_jf,~Q_jf)|\\
&\leq|(\Lambda^{-1}X_0f,~Q_jX_jf)|+|(X_jf,~{\widetilde{X}}_0\Lambda^{-1}Q_jf)|\\
&\leq~\|\Lambda^{-1}X_0f\|_{2/3}\|Q_j X_jf\|_{-2/3} +|(X_jf,
~[{\widetilde{X}}_0,~\Lambda^{-1}Q_j]f)|
+|(X_jf, ~\Lambda^{-1}Q_jX_0f)|~\\
&\leq
C_K\{~\|\Lambda^{-1/3}X_0f\|_{0}^2+\|\Lambda^{-1}X_jf\|_\sigma^2+\|f\|_0^2
~\},
\end{align*}
where we have used the simple fact that
$[{\widetilde{X}}_0,~\Lambda^{-1}Q_j]\in {\rm
Op}(S^{\sigma-1/3-1}).$ Then (\ref{re2}) and (\ref{re1}) give
immediately (\ref{mul1}).

We now study (\ref{mul2}). First of all, we have
\begin{eqnarray*}
\|[\Lambda^{-1}X_j,~{\widetilde{X}}_0]f\|_{\sigma/4}^2&=&(\Lambda^{-1}X_j
{\widetilde{X}}_0f,~\Lambda^{\sigma/2}[\Lambda^{-1}X_j,~{\widetilde{X}}_0]f)\\
&&- ({\widetilde{X}}_0 \Lambda^{-1}X_jf,
~\Lambda^{\sigma/2}[\Lambda^{-1}X_j, ~{\widetilde{X}}_0]f).
\end{eqnarray*}
By a straightforward calculation, it follows that
\begin{align*}
&|({\widetilde{X}}_0 \Lambda^{-1}X_jf,
~\Lambda^{\sigma/2}[\Lambda^{-1}X_j,
~{\widetilde{X}}_0]f)|=|(\Lambda^{-1}X_j f,
~{\widetilde{X}}_0\Lambda^{\sigma/2}
[\Lambda^{-1}X_j, ~{\widetilde{X}}_0]f)|\\
&\leq|(\Lambda^{-1}X_j f, ~\Lambda^{\sigma/2}[\Lambda^{-1}X_j,
~{\widetilde{X}}_0] {\widetilde{X}}_0f)| +|(\Lambda^{-1}X_j
f,~[\Lambda^{\sigma/2}[\Lambda^{-1}X_j,
~{\widetilde{X}}_0],~{\widetilde{X}}_0]f)|\\
&\leq C_K\big\{~|(\Lambda^{-1}X_j f, ~
\Lambda^{\sigma/2}[\Lambda^{-1}X_j,
~{\widetilde{X}}_0]X_0f)|+\|\Lambda^{-1}X_j
f\|_{\sigma/2}^2+\|f\|_0^2~\big\}\\
&\leq C_K\big\{~|(\Lambda^{-1}X_j f,
~\Lambda^{\sigma/2}[\Lambda^{-1}X_j,
~{\widetilde{X}}_0]X_0f)|+\|\mc{P}f\|_0^2+\|f\|_0^2~\big\}.
\end{align*}
In the last inequality,  we have used (\ref{re1}) in Lemma
\ref{lem2}.

Denote $P_{\sigma/2}=\Lambda^{\sigma/2}[\Lambda^{-1}X_j,
~{\widetilde{X}}_0] \in {\rm Op}(S^{\sigma/2})$. Recall that
$X_0=\mc P-a{\widetilde\Lambda}^{2\sigma}_v$.  We have
\begin{align*}
&|(\Lambda^{-1}X_j f, ~\Lambda^{\sigma/2}[\Lambda^{-1}X_j,
~{\widetilde{X}}_0]X_0f)|
=|(\Lambda^{-1}X_j f,~P_{\sigma/2}X_0f)|\\
&\leq |(\Lambda^{-1}X_j f,~P_{\sigma/2}\mc{P}f)| +|(\Lambda^{-1}X_j
f,~P_{\sigma/2}a{\widetilde\Lambda}^{2\sigma}_v
 f)|\\
&\leq C_K\{~\|\Lambda^{-1}X_j f\|_{\sigma/2}^2+\|\mc{P}f\|_0^2
+|({\widetilde\Lambda}^{\sigma}_v\Lambda^{-1}X_j f,
~{\widetilde\Lambda}^{-\sigma}_v
P_{\sigma/2}a{\widetilde\Lambda}^{2\sigma}_v f)|~\}\\
&\leq C_K\{~\|\Lambda^{-1}X_j f\|_{\sigma/2}^2+\|\mc{P}f\|_0^2
+\|{\widetilde\Lambda}^{\sigma}_v\Lambda^{-1}X_j f\|_{\sigma/2}^2+
\|{\widetilde\Lambda}^{\sigma}_vf\|_0^2~\}\\
&\leq
C_K\{~\|{\widetilde\Lambda}^{\sigma}_v\Lambda^{\sigma/2}\Lambda^{-1}X_j
f\|_0^2+\|\mc{P}f\|_0^2+\|f\|_0^2~\}.
\end{align*}
For the last inequality,  we used results from  (\ref {T}) and
(\ref{re1}). Clearly,
$[{\widetilde\Lambda}^{\sigma}_v,~\Lambda^{-1}X_j]=[{\widetilde\Lambda}^{\sigma}_v,
~\Lambda^{\sigma/2}]=[\Lambda^{-1}X_j,~\Lambda^{\sigma/2}]=0$. Then
we get
\begin{align*}
&\|{\widetilde\Lambda}^{\sigma}_v\Lambda^{\sigma/2}\Lambda^{-1}X_jf\|_0^2=
-{\rm Re}(\mc{P}f, ~a^{-1}\Lambda^\sigma \Lambda^{-2}X_j^2f) +{\rm
Re}({\widetilde{X}}_0f, ~a^{-1}\Lambda^\sigma
 \Lambda^{-2}X_j^2f)\\
&\leq C_K\{~\|\mc Pf\|_0^2+\|\Lambda^{-1}X_jf\|_\sigma^2 +{\frac 1
2}|(f,~[\Lambda^\sigma
\Lambda^{-2}X_j^2,~a^{-1}{\widetilde{X}}_0]f)|\\
&\,\,\,\,\,\,\,\,\,\,\,+ {\frac 1
2}|(f,~[a^{-1},~{\widetilde{X}}_0]\Lambda^\sigma
 \Lambda^{-2}X_j^2f)|~\}\\
&\leq C_K\{~\|\mc Pf\|_0^2+\|f\|_0^2+\|\Lambda^{-1}X_jf\|_\sigma^2
+|(f,~\Lambda^{-1}X_j[\Lambda^\sigma
\Lambda^{-1}X_j,~a^{-1}{\widetilde{X}}_0]f)|\\
&\,\,\,\,\,\,\,\,\,\,\,+
|(f,~[\Lambda^{-1}X_j,~a^{-1}{\widetilde{X}}_0]\Lambda^\sigma
 \Lambda^{-1}X_j f)|~\}\\
&\leq C_K\{~\|\mc Pf\|_0^2+\|\Lambda^{-1}X_jf\|_\sigma^2+\|f\|_0^2~\}\\
&\leq C_K\{~\|\mc Pf\|_0^2+\|f\|_0^2~\}.
\end{align*}
The above three estimates show immediately
\begin{eqnarray*}
|({\widetilde{X}}_0\Lambda^{-1}X_jf,~P_{\sigma/2}f)| \leq
C_K\{~\|\mc P f\|_0^2+\|f\|_0^2~\}.
\end{eqnarray*}
Similarly, we can prove
\begin{eqnarray*}
|(\Lambda^{-1}X_j {\widetilde{X}}_0 f,~P_{\sigma/2}f)| \leq
C_K\{~\|\mc P f\|_0^2+\|f\|_0^2~\}.
\end{eqnarray*}
This completes the proof of  Lemma \ref{mul}.
\end{proof}

The rest of this section is devoted to the proof of proposition
\ref{prp2}:

\smallskip

\emph{Proof of Proposition \ref{prp2}.}\hspace{0.1cm} Notice that
$\partial_{x_j}=[X_j,~X_0]$ and $
\partial_t=X_0-\sum\limits_{j=1}^n
v_j\cdot[X_j,~X_0].$ Hence, for any $f\in C_0^\infty(K),$ we have
\begin{eqnarray*}
\|f\|^2_{\delta}
&=&~\|\partial_tf\|^2_{\delta-1}+\sum\limits_{j=1}^n\|\partial_{x_j}f\|^2_{\delta-1}
+\sum\limits_{j=1}^n\|\partial_{v_j}f\|^2_{\delta-1}+\|f\|^2_0~\\
&\leq&C_K~\{~\|\Lambda^{-1}X_0f\|^2_{\delta}+
\sum\limits_{j=1}^n\big(\|\tilde\psi(v) v_j
 [X_j,~{\widetilde{X}}_0]f\|^2_{\delta-1}\\
&&\,\,\,\,\,\,\,\,\,\, +
\|[X_j,~{\widetilde{X}}_0]f\|^2_{\delta-1}+\|\Lambda^{-1}X_jf\|^2_{\delta}\big)+\|f\|^2_0~\}.
\end{eqnarray*}
Since
$\delta=\max\set{\sigma/4,~\sigma/2-1/6}\leq\min\set{2/3,~\sigma},$
applying (\ref{re2}) and (\ref{re1}) to Lemma \ref{lem2}, we have
that
$$
\|\Lambda^{-1}X_0f\|_{\delta}+\sum_{j=1}^n\|\Lambda^{-1}X_jf\|_{\delta}\leq
C_K\{~\|\mc{P}f\|_0+\|f\|_0~\}
$$
and
$$\|\tilde\psi(v) v_j [X_j,~{\widetilde{X}}_0]f\|_{\delta-1}\leq C_K\{
 \|
[X_j,~{\widetilde{X}}_0]f\|_{\delta-1}+\|f\|_{0}\}.
$$
 It remains to treat the  term $\|[X_j,
\,{\widetilde{X}}_0]f\|_{\delta-1}.$ We consider the following two
cases.

\smallskip

\emph{Case (i).}
 \:$\delta=\max\set{\sigma/4,~\sigma/2-1/6}=\sigma/2-1/6.$

We apply (\ref{mul1}) in Lemma \ref{mul} to get
\begin{eqnarray*}
\|[X_j,~{\widetilde{X}}_0]f\|_{\delta-1}&\leq& \|[X_j,
~\Lambda^{-1}{\widetilde{X}}_0]f\|_\delta+\|[X_j,~\Lambda^{-1}]
{\widetilde{X}}_0 f\|_\delta\\
&\leq&C_K\{~\|\mc{P}f\|_0+\|\Lambda^{-1}X_0f\|_\delta+ \|f\|^2_0~\}.
\end{eqnarray*}
Since $\delta<2/3,$ then applying (\ref{re2}) again, we get
immediately
\begin{eqnarray*}
\|[X_j,~{\widetilde{X}}_0]f\|_{\delta-1} \leq
C_K\{~\|\mc{P}f\|_0+\|f\|_0~\}.
\end{eqnarray*}

\smallskip

\emph{Case (ii).} \:$\delta=\max(\sigma/4, \sigma/2-1/6)=\sigma/4.$

By (\ref{mul2}) in Lemma \ref{mul}, it follows that
\begin{eqnarray*}
\|[X_j,~{\widetilde{X}}_0]f\|_{\delta-1}&\leq& \|[\Lambda^{-1}X_j,
~{\widetilde{X}}_0]f\|_\delta+\|
[\Lambda^{-1}, ~{\widetilde{X}}_0]X_jf\|_\delta\\
&\leq&C_K\{~\|\mc{P}f\|_0+\|\Lambda^{-1}X_jf\|_\delta+\norm{f}_0~\}.
\end{eqnarray*}
Note that $\delta<\sigma,$ and hence from (\ref{re1}), we have
\begin{eqnarray*}
\|[X_j,~{\widetilde{X}}_0]f\|_{\delta-1}\leq
C_K\{~\|\mc{P}f\|_0+\|f\|_0~\}.
\end{eqnarray*}

\smallskip

A combination of  Case (i) and  Case (ii) yields that for
$\delta=\max\set{\sigma/4,~\sigma/2-1/6},$
\begin{eqnarray*}
\|[X_j,~{\widetilde{X}}_0]f\|_{\delta-1} \leq
C_K\{~\|\mc{P}f\|_0+\|f\|_0~\}.
\end{eqnarray*}
Then we get
\begin{eqnarray}\label{delta}
\|f\|_\delta \leq C_K\{~\|\mc{P}f\|_0+\|f\|_0~\}.
\end{eqnarray}

\bigbreak

Choose now a cutoff function $\psi\in C_0^{\infty}(\bb R^{2n+1})$
such that $\psi|_K\equiv1$ and Supp $\psi$ is a neighborhood of $K$.
Then for any $r\geq0$,  $\varepsilon>0$ and  $f\in C^\infty_0(K)$,
by (\ref{delta}), we have
\begin{eqnarray*}
\|f\|_{r+\delta}&=&\|\Lambda^r \psi
f\|_{\delta}\leq\|\psi\Lambda^rf\|_\delta+\|[\Lambda^r,~\psi]f\|_\delta
\leq C_K\{~\|\mc{P}\psi\Lambda^rf\|_0+\|f\|_r~\}.
\end{eqnarray*}
Furthermore, notice  that
$$
[a {\widetilde\Lambda}^{2\sigma}_v,
\psi\Lambda^r]=2a[{\widetilde\Lambda}^{\sigma}_v,
~\psi\Lambda^r]{\widetilde\Lambda}^{\sigma}_v
+a[{\widetilde\Lambda}^{\sigma}_v,~
[{\widetilde\Lambda}^{\sigma}_v,~\psi\Lambda^r]~]+[a,
~\psi\Lambda^r]{\widetilde\Lambda}^{2\sigma}_v.
$$
Hence
\begin{eqnarray*}
\|\mc{P}\psi\Lambda^rf\|_0&\leq&
\|\psi\Lambda^r\mc{P}f\|_0+\|[{\widetilde{X}}_0,~\psi\Lambda^r]f\|_0+
\|a[{\widetilde\Lambda}^{\sigma}_v,~[{\widetilde\Lambda}^{\sigma}_v,\psi\Lambda^r]~]f\|_0\\
&&+2\|a[{\widetilde\Lambda}^{\sigma}_v,~\psi\Lambda^r]{\widetilde\Lambda}^{\sigma}_v
f\|_0 +\|[a,
~\psi\Lambda^r]{\widetilde\Lambda}^{2\sigma}_v f\|_0\\
&\leq&C_{K,
r}\{~\|\mc{P}f\|_r+\|f\|_r+\|{\widetilde\Lambda}^{\sigma}_v
f\|_r~\},
\end{eqnarray*}
Combining  with (\ref{T}), we have
\begin{eqnarray*}
\|\mc{P}\psi\Lambda^rf\|_0 \leq C_{K, r}\{~\|\mc{P}f\|_r+\|f\|_r~\}.
\end{eqnarray*}
The above three estimates show that
\begin{eqnarray*}
\|f\|_{r+\delta}\leq C_{K, r}\{~\|\mc Pf\|_r+\|f\|_r~\}.
\end{eqnarray*}
Applying the interpolation inequality (\ref{interpolation}), it
follows that
\[
\|f\|_{r+\delta}\leq C_{\varepsilon,r,
K}\{~\|\mc{P}f\|_r+\|f\|_0~\}+\varepsilon\|f\|_{r+\delta}.
\]
Taking $\varepsilon$ small enough, we get the desired subelliptic
estimate (\ref {sub0}). This completes the proof of Proposition
\ref{prp2}.

\bigbreak Since the subelliptic estimate in  Proposition \ref{prp2}
is true for $0<\sigma<1$, we can now improve the
$C^\infty$-hypoellipticity result of [\ref {MX}]( which is for
$1/3<\sigma<1$ ) as in the  following Theorem:

\begin{thm}\label{th2.1}
Let $0<\sigma<1$. Then the operator $\mc{P}$ given by
(\ref{Fokker-Planck}) is $C^\infty$ hypoelliptic in
$\mathbb{R}^{2n+1}$, provided that the coefficient $ a(t,x,v) $ is
in the space  $C^\infty(\mathbb{R}^{2n+1})$ and $a(t,x,v)>0$ .
\end{thm}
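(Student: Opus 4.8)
\emph{Proof proposal.} The plan is to deduce $C^\infty$ hypoellipticity from the subelliptic estimate \reff{sub0} by a standard bootstrap argument. The key point is that \reff{sub0} gives, for any compact $K$ and any $r\geq 0$, a gain of $\delta>0$ derivatives: if $f\in C_0^\infty(K)$ then $\|f\|_{r+\delta}\leq C_{K,r}\{\|\mc Pf\|_r+\|f\|_0\}$. To use this on a distributional solution $u$ of $\mc Pu=g$ with $g\in C^\infty(\Omega)$, I would first localize. Fix an open $\Omega'\Subset\Omega$ and a cutoff $\chi\in C_0^\infty(\Omega)$ with $\chi\equiv 1$ near $\overline{\Omega'}$. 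Since $u\in\D'(\Omega)$, we have $\chi u\in H^{s_0}(\RR^{2n+1})$ for some $s_0\in\RR$ (possibly very negative). We want to show $\chi' u\in H^{s}$ for every $s$ and every cutoff $\chi'$ supported where $\chi\equiv 1$.

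The subtlety is that $\mc P$ is not an honest pseudodifferential operator on $\RR^{2n+1}$ (the coefficient of the kinetic part is unbounded), and \reff{sub0} applies only to $C_0^\infty$ functions, not directly to $\chi u$. So I would use a regularization: introduce Friedrichs mollifiers $J_\eps$ (or the operators $\Lambda^{-N}$ truncated appropriately) so that $J_\eps\chi u\in C_0^\infty$, apply \reff{sub0} to $f=J_\eps\chi u$, and control the commutator terms $[\mc P,J_\eps\chi]u$. The commutator $[\mc P,\chi]$ involves $[X_0,\chi]$, which is a smooth first-order term supported in $\mathrm{supp}\,\nabla\chi$, and the terms coming from $[a\widetilde\Lambda_v^{2\sigma},\chi]$, which by the commutator calculus (as already used at the end of the proof of Proposition \ref{prp2}, expanding $[a\widetilde\Lambda_v^{2\sigma},\chi]=2a[\widetilde\Lambda_v^\sigma,\chi]\widetilde\Lambda_v^\sigma+a[\widetilde\Lambda_v^\sigma,[\widetilde\Lambda_v^\sigma,\chi]]+[a,\chi]\widetilde\Lambda_v^{2\sigma}$) are of order $\leq 2\sigma-1<1$ in the $v$ variable and supported in $\mathrm{supp}\,\nabla\chi$. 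Crucially, one also needs the commutator with the mollifier $[\mc P,J_\eps]$ to stay bounded uniformly in $\eps$ on the relevant Sobolev scale; this is where \reff{T} is used again to absorb $\|\widetilde\Lambda_v^\sigma(\cdot)\|$-type terms. I would set things up so that, assuming inductively $\chi u\in H^{r}$ for some $r$, the right-hand side of \reff{sub0} applied to $J_\eps\chi_1 u$ (with $\chi_1$ a slightly smaller cutoff) is bounded uniformly in $\eps$, whence $\chi_1 u\in H^{r+\delta}$; iterating, $\chi' u\in H^s$ for all $s$, so $u\in C^\infty(\Omega')$, and since $\Omega'$ was arbitrary, $u\in C^\infty(\Omega)$.

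A cleaner route, which I would actually prefer to present, is to invoke the abstract fact that a \emph{subelliptic} operator (in the sense that \reff{sub0} holds with some positive gain $\delta$, together with the a priori estimate for all higher $r$ which \reff{sub0} already provides) is hypoelliptic — this is classical (Kohn--Nirenberg, Hörmander); the only thing to check is that the local a priori estimate \reff{sub0} upgrades to a genuine local regularity statement, which is precisely the mollifier argument sketched above. The main obstacle is purely this technical passage from the a priori estimate on test functions to regularity of distributions, i.e.\ the uniform-in-$\eps$ control of the mollifier commutators against the unbounded kinetic coefficient; once that is in place the induction on $r$ is immediate because the gain $\delta$ is a fixed positive number independent of $r$. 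Note this also gives, as remarked after the statement, the new range $0<\sigma\leq 1/3$ not covered by \cite{MX}, since Proposition \ref{prp2} holds for all $0<\sigma<1$.
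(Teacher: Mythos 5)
Your proposal is correct and follows essentially the same route as the paper: combine the subelliptic estimate \reff{sub0} (valid for all $0<\sigma<1$) with a mollifier/commutator argument to upgrade the a priori estimate to a local regularity gain $H^r_{loc}\to H^{r+\delta}_{loc}$, then bootstrap and conclude by Sobolev embedding. The paper simply delegates the mollifier-commutator estimates to Section~4 and Proposition~4.1 of Morimoto--Xu \cite{MX}, whereas you sketch them directly; the substance is identical, including your (correct) remark that one cannot invoke an abstract pseudodifferential hypoellipticity theorem off the shelf because the coefficient $v$ in $X_0$ is unbounded, so the uniform-in-$\eps$ commutator control is the real content.
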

In fact, if we consider only the local regularity problem, as in
Proposition 4.1 of [\ref {MX}], we can prove that if
 $f\in H^s_{{loc}}(\RR^{2n+1}), u\in\cD'(\RR^{2n+1})$ and $\mc{P}u=f$
then $u\in H^{s+\delta}_{{loc}}(\RR^{2n+1})$. By using the
subelliptic estimate (\ref {sub0}), the estimate for the commutators
between the operator $\mc{P}$ and the mollifiers  are exactly the
same as in  Section 4 of [\ref {MX}]. This gives the $C^\infty$
hypoellipticity by the  Sobolev embedding theorem. The same argument
applies to the semi-linear equations.

Remark that the results of [\ref {MX}] are not only  regularity
results. The authors also proved a global estimate with weights (the
moments). This  is another important problem for the kinetic
equation.

%%%%%%%%%%%%%%%%%%%%%%%%%%%%%%%%%%%%%%%%%%%%%%%%%%%%%%%%%%%%%%%%%%%%%
%%%%%%%%%%%%%%%%%%%%%%%%%%%%%%%%%%%%%%%%%%%%%%%%%%%%%%%%%%%%%%%%%%%%%

\section{Cutoff functions and commutators}
\label{sect+3} \setcounter{equation}{0}

To prove the Gevrey regularity of a solution, we have to prove an
uniformly iteration estimate (\ref{gevrey}). Our only tool is the
subelliptic estimate (\ref {sub0}).  Since it is a local estimate,
we have to control the commutators between the operator $\mc{P}$ and
the cutoff functions. This is always the technical key step in the
Gevrey regularity problem. Our additional difficulty comes from the
complicated nature of the operator $\mc{P}$.

Since the Gevrey hypoellipticity is a local property, it suffices to
show $\mc{P}$ is Gevrey hypoelliptic in the open domain
$\Omega\subset\Real^{2n+1}$ given by
\[
\Omega=\Omega^1\times \Omega^2=\{(t,x)\in \bb{R}^{n+1};\,
t^2+|x|^2<1 \}\times \set{v\in \bb{R}^{n};\,  |v|^2<1}.
\]
Define $W$ by setting
 \[ W=2\Omega=\set{\inner{t,x,v};\: \abs
t^2+\abs x^2\leq 2^2, \:\abs v\leq 2}
\]
For $0\leq \rho<1$, set
$\Omega_\rho=\Omega_\rho^1\times\Omega_\rho^2$ with $\Omega_\rho^1$
and $\Omega_\rho^2$ to be given by
  \[
  \Omega_\rho^1=\set{(t,x)\in\bb{R}^{n+1}; \:\:
  \biginner{t^2+|x|^2}^{1/2}<1-\rho},\qquad
   \Omega_\rho^2=
   \set{v\in \bb{R}^{n};\:\: |v|^2<1-\rho}.
\]
Let $\chi_{\rho}$  be the characteristic function of the set
$\Omega_{\rho}^2$, and let $\phi\in C_0^\infty(\Omega^2)$ be a
function satisfying  $0\leq \phi\leq 1$ and
$\int_{\bb{R}^{n}}\phi(v)dv=1.$ For any $\eps, ~\tilde\eps>0$,
setting $\phi_\varepsilon(v)=\varepsilon^{-n}
\phi\inner{{\frac{v}{\varepsilon}}}$ and
$\varphi_{\varepsilon,\tilde\eps}(v)=
\phi_{\varepsilon/2}*\chi_{\varepsilon/2 +\tilde\eps}(v)$. Then for
a small $\eps, ~\tilde\eps>0$,
\begin{eqnarray*}
&&\varphi_{\varepsilon,\tilde\eps}\in
C_0^\infty(\Omega_{\tilde\eps}^2);\,\,\,\,\,\,\varphi_{\eps,
\tilde\eps}=1\,\,\,\, \mbox{in}\,\,\,
\Omega_{\varepsilon+\tilde\eps}^2;\\
&& \sup_{v\in\RR^n}\abs{D^\alpha\varphi_{\varepsilon,\tilde\eps}(v)}
\leq C_\alpha\varepsilon^{-|\alpha|} \,\,\,\,\,\,\,\, \mbox{for
any}\,\,\,\,\,\,\, \alpha\in\NN^n.
\end{eqnarray*}
In the same way, we can find a function
$\psi_{\eps,\tilde\eps}(t,x)\in C_0^\infty(\Omega_{\tilde\eps}^1)$
such that $\psi_{\eps, \tilde\eps}=1$ in
$\Omega_{\varepsilon+\tilde\eps}^1$ and
$\sup\abs{D^\alpha\psi_{\varepsilon,\tilde\eps}} \leq
C_\alpha\varepsilon^{-|\alpha|}.$

Now for any $N\in\NN, N\geq 2$ and any $0<\rho<1$,  we set
$$
\Phi_{\rho, N}(t,x,v)=\psi_{\frac{\rho}{N}, \frac{(N-1)\rho}
{N}}(t,x) \vpi_{\frac{\rho}{N}, \frac{(N-1)\rho} {N}}(v).
$$
Then we have,
\begin{equation}\label{cutoff}
   \left\{ \begin{array}{lll}
            \Phi_{\rho, N}\in C_0^\infty(\Omega_{\frac{N-1}
            {N}\rho})\\
            \smallskip
            \Phi_{\rho,N}(t,x,v)=1, \quad (t,x,v)\in\Omega_{\rho},
            \\
            \smallskip
            \sup\abs{D^\alpha\Phi_{\rho,N}}
            \leq C_\alpha(N/\rho)^{|\alpha|}.
            \end{array}
   \right.
\end{equation}

\vspace{0.5ex}
 For such cut-off functions, we have the following Lemma (see  Corollary 0.2.2 of
 \cite{Dur}).
\begin{lem}\label{3.1}
  There exists a constant $C_n,$ depending only on $n,$ such that
  for any $0\leq\mu\leq n+2,$ and  $f\in\cS(\Real^{n+1}),$ we have
  \begin{equation}\label{cutoffnorm}
   \norm{\inner{D^\gamma\Phi_{\rho,N}}f}_{\mu}\leq C_n\set{(N/\rho)^{\abs\gamma}\norm
   f_{\mu}+(N/\rho)^{\abs\gamma+\mu}\norm{f}_0}, \quad \abs\gamma\leq 2.
  \end{equation}
\end{lem}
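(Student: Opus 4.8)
The plan is to reduce \reff{cutoffnorm} to a clean multiplier estimate, and then to prove that estimate by a Leibniz expansion combined with a product estimate in fractional Sobolev spaces. Only the factor $g:=D^\gamma\Phi_{\rho,N}$ matters, and by \reff{cutoff} this is a $C_0^\infty$ function supported in the fixed compact set $\overline\Omega$ with $\sup\abs{D^\alpha g}\leq C_\alpha (N/\rho)^{\abs\gamma+\abs\alpha}$ for every multi-index $\alpha$. Writing $A=(N/\rho)^{\abs\gamma}$ and $M=N/\rho$, it therefore suffices to prove: if $g\in C_0^\infty(\RR^d)$ is supported in a fixed compact set and $\sup\abs{D^\alpha g}\leq C_\alpha AM^{\abs\alpha}$ for all $\alpha$, then for every $0\leq\mu\leq n+2$ and every $f\in\cS(\RR^d)$,
\[
\norm{gf}_\mu\leq C_n A\bigl(\norm{f}_\mu+M^\mu\norm{f}_0\bigr).
\]
The bound $\mu\leq n+2$ is used only to control the number of derivatives of $g$ appearing and the number of terms produced below, which is why the constant ends up depending on $n$ alone.

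I would then expand by Leibniz. Write $\mu=k+s$ with $k=\lfloor\mu\rfloor$ and $s\in[0,1)$. From $\langle\xi\rangle^{2\mu}\sim\langle\xi\rangle^{2s}\sum_{\abs\beta\leq k}\xi^{2\beta}$ one gets $\norm{gf}_\mu^2\sim\sum_{\abs\beta\leq k}\norm{D^\beta(gf)}_s^2$, so it is enough to bound each $\norm{(D^{\beta'}g)(D^{\beta-\beta'}f)}_s$ with $\beta'\leq\beta$, $\abs\beta\leq k$. If $s=0$ (in particular if $\mu\in\NN$), Hölder's inequality gives this immediately: $\norm{(D^{\beta'}g)(D^{\beta-\beta'}f)}_0\leq\norm{D^{\beta'}g}_{L^\infty}\norm{D^{\beta-\beta'}f}_0\leq C_\beta AM^{\abs{\beta'}}\norm{f}_{\abs\beta-\abs{\beta'}}$.

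The substantive case is $s\in(0,1)$, which I would treat with the Gagliardo--Slobodeckij seminorm $[\,\cdot\,]_s$. Using $(uw)(x)-(uw)(y)=u(x)\bigl(w(x)-w(y)\bigr)+w(y)\bigl(u(x)-u(y)\bigr)$, the bound $\abs{u(x)-u(y)}\leq\min\set{2\norm{u}_{L^\infty},\,\norm{\nabla u}_{L^\infty}\abs{x-y}}$, Minkowski's inequality, and $\int_{\RR^d}\min(c_1,c_2\abs z)^2\abs z^{-d-2s}\,dz\leq C_s\,c_1^{2-2s}c_2^{2s}$ (split the integral at $\abs z=c_1/c_2$), one obtains
\[
\norm{uw}_s\leq C_s\bigl\{\norm{u}_{L^\infty}\norm{w}_s+\norm{u}_{L^\infty}^{1-s}\norm{\nabla u}_{L^\infty}^{s}\norm{w}_0\bigr\}.
\]
Taking $u=D^{\beta'}g$, so $\norm{u}_{L^\infty}\leq CAM^{\abs{\beta'}}$, $\norm{\nabla u}_{L^\infty}\leq CAM^{\abs{\beta'}+1}$ and hence $\norm{u}_{L^\infty}^{1-s}\norm{\nabla u}_{L^\infty}^{s}\leq CAM^{\abs{\beta'}+s}$, and $w=D^{\beta-\beta'}f$, so $\norm{w}_s\leq\norm{f}_{\abs{\beta-\beta'}+s}\leq\norm{f}_{\mu-\abs{\beta'}}$ and $\norm{w}_0\leq\norm{f}_{\abs{\beta-\beta'}}\leq\norm{f}_{\mu-\abs{\beta'}-s}$ (using $\abs{\beta-\beta'}=\abs\beta-\abs{\beta'}\leq k-\abs{\beta'}$), I would get
\[
\norm{(D^{\beta'}g)(D^{\beta-\beta'}f)}_s\leq C_\beta A\bigl\{M^{\abs{\beta'}}\norm{f}_{\mu-\abs{\beta'}}+M^{\abs{\beta'}+s}\norm{f}_{\mu-\abs{\beta'}-s}\bigr\}.
\]

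Finally I would interpolate. For $0\leq\lambda\leq\mu$, log-convexity of the Sobolev norms gives $\norm{f}_{\mu-\lambda}\leq\norm{f}_\mu^{1-\lambda/\mu}\norm{f}_0^{\lambda/\mu}$, whence $M^\lambda\norm{f}_{\mu-\lambda}\leq(M^\mu\norm{f}_0)^{\lambda/\mu}\norm{f}_\mu^{1-\lambda/\mu}\leq\norm{f}_\mu+M^\mu\norm{f}_0$ by the weighted arithmetic--geometric mean inequality. Applying this with $\lambda=\abs{\beta'}$ and with $\lambda=\abs{\beta'}+s$ (both $\leq\mu$ because $\abs{\beta'}\leq\abs\beta\leq k$; the $s=0$ terms are covered by the first choice) bounds each Leibniz term by $C_\beta A(\norm{f}_\mu+M^\mu\norm{f}_0)$, and summing the finitely many terms — their number being controlled by $k\leq n+2$ — yields the reduced estimate and hence \reff{cutoffnorm}. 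The one step I expect to need genuine care rather than bookkeeping is the fractional product estimate above: it has to be done directly at the order $s$, since interpolating the integer-order bounds between $k$ and $k+1$ would cost a spurious extra power of $N/\rho$. This estimate is, in any case, exactly what Corollary~0.2.2 of \cite{Dur} provides.
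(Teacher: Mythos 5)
Your proof is correct. The paper itself does not prove Lemma~\ref{3.1}; it simply cites Corollary~0.2.2 of Durand~\cite{Dur}, so there is no in-text argument to compare against. Your derivation is a clean, self-contained alternative: the reduction to a multiplier estimate $\norm{gf}_\mu\leq C_n A(\norm f_\mu+M^\mu\norm f_0)$ with $g$ satisfying $\sup\abs{D^\alpha g}\leq C_\alpha A M^{\abs\alpha}$ is exactly the right normalization, the norm equivalence $\norm{h}_\mu^2\sim\sum_{\abs\beta\leq k}\norm{D^\beta h}_s^2$ (for $\mu=k+s$, $s\in[0,1)$) is standard, the Leibniz expansion and the Gagliardo--Slobodeckij product bound
\[
\norm{uw}_s\leq C_s\bigl\{\norm{u}_{L^\infty}\norm{w}_s+\norm{u}_{L^\infty}^{1-s}\norm{\nabla u}_{L^\infty}^{s}\norm{w}_0\bigr\}
\]
are proved correctly (the splitting of the kernel integral at $\abs z=c_1/c_2$ is the key computation, and it is right), and the final interpolation $M^\lambda\norm f_{\mu-\lambda}\leq\norm f_\mu+M^\mu\norm f_0$ for $0\leq\lambda\leq\mu$ closes the argument without losing any power of $N/\rho$. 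Your remark at the end is also well taken: one cannot simply interpolate the integer-order bounds, because the estimate has an additive structure with two different powers of $M$, not a single operator-norm bound, so the fractional case genuinely requires working at order $s$. The only cosmetic point worth flagging is that the lemma as printed says $f\in\cS(\Real^{n+1})$ while the cutoffs $\Phi_{\rho,N}$ live on $\Real^{2n+1}$; your argument is dimension-independent and applies verbatim in either case, so this does not affect the proof.
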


\vspace{1ex}

We study now the commutator of above cutoff function with the
operator $\mc{P}$. Since the operator is a differential operator
with respect to the $(t, x)$ variables, it is enough to consider the
commutator of ${\widetilde\Lambda}^{\sigma}_v$ with a cut-off
function in the $v$ variable. We set $\varphi_{\rho,N}(v)
=\vpi_{{\rho\over N},{{(N-1)\rho}\over N}}(v) $. The proof of the
following Lemma is very similar to that of M. Durand \cite{Dur}.
Since  our calculus  is much  easier and much more direct, we repeat
it here.

\begin{lem}\label{+lem9}
There exists a constant $C_{\sigma,n}$, depending only on $n$ and
$\sigma$, such that for any $\kappa$ with $1\leq \kappa\leq n+3,$
and  $f\in\cS(\Real^{2n+1}),$
\begin{equation}\label{+com1}
  \|[{\widetilde\Lambda}^{\sigma}_v,~~\varphi_{\rho,N}] f \|_\kappa
    \leq C_{\sigma,n}\set{
\inner{N/\rho}^\sigma\norm{f}_{\kappa}
+\inner{N/\rho}^{\kappa+\sigma}\norm{f}_0}
\end{equation}
and
\begin{eqnarray}\label{+com2}
\|[{\widetilde\Lambda}^{\sigma}_v,~~[{\widetilde\Lambda}^{\sigma}_v,~~\varphi_{\rho,N}]~]
f \|_\kappa\leq  C_{\sigma,n}\set{
\inner{N/\rho}^{2\sigma}\norm{f}_{\kappa}
+\inner{N/\rho}^{\kappa+2\sigma}\norm{f}_0}.
\end{eqnarray}

\end{lem}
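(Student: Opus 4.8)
The plan is to estimate the commutator $[{\widetilde\Lambda}^{\sigma}_v,~\varphi_{\rho,N}]$ by passing through the Fourier side in the $v$-variable. Since $\varphi_{\rho,N}$ acts only in $v$ and ${\widetilde\Lambda}^{\sigma}_v$ is a Fourier multiplier in $\eta$ with symbol $p(\eta)^{1/2}$ (which equals $\abs\eta^\sigma$ for $\abs\eta\geq 2$), the kernel of the commutator is
\[
K(v,w)=\biginner{p(v-w\text{-side})}\,,
\]
more precisely the operator $[{\widetilde\Lambda}^{\sigma}_v,~\varphi_{\rho,N}]$ has Schwartz kernel $m(v-w)\inner{\varphi_{\rho,N}(w)-\varphi_{\rho,N}(v)}$ where $m$ is the inverse Fourier transform of $p(\eta)^{1/2}$. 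The symbolic calculus gives an asymptotic expansion whose leading term is $\sigma$-lower order than ${\widetilde\Lambda}^{\sigma}_v$ itself, which already heuristically explains the gain: each commutator with $\varphi_{\rho,N}$ costs one $v$-derivative of $\varphi_{\rho,N}$, i.e. a factor $N/\rho$, but lowers the order by $1$, so the net symbol order is $\sigma-1<\sigma$; combining with the bound $\abs{D^\alpha\varphi_{\rho,N}}\leq C_\alpha(N/\rho)^{\abs\alpha}$ one expects exactly the stated structure $(N/\rho)^\sigma\norm f_\kappa+(N/\rho)^{\kappa+\sigma}\norm f_0$.

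The cleanest route, and the one I would follow (mimicking Durand but more directly), is to split ${\widetilde\Lambda}^{\sigma}_v$ into a smoothing part from the region $\abs\eta\leq 2$ (harmless, since there the multiplier is a fixed Schwartz-class-behaving bounded symbol and its commutator with a bounded-derivative function is trivially controlled) and the genuine part with symbol $\abs\eta^\sigma$. For the latter, I would use the integral representation of the fractional power: for $0<\sigma<1$, write ${\widetilde\Lambda}^{\sigma}_v$ (modulo lower order) via $\abs\eta^\sigma=c_\sigma\int_0^\infty \lambda^{\sigma/2-1}\frac{\abs\eta^2}{\abs\eta^2+\lambda}d\lambda$ or, more simply, exploit that the kernel $m(v)$ of $\abs{D_v}^\sigma$ satisfies $\abs{m(v)}\lesssim \abs v^{-n-\sigma}$ away from $0$. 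Then
\[
[{\widetilde\Lambda}^{\sigma}_v,~\varphi_{\rho,N}]f(v)=\int m(v-w)\biginner{\varphi_{\rho,N}(w)-\varphi_{\rho,N}(v)}f(w)\,dw,
\]
and using $\abs{\varphi_{\rho,N}(w)-\varphi_{\rho,N}(v)}\leq \min\set{2,\,C(N/\rho)\abs{v-w}}$ one estimates the kernel of the commutator by $C(N/\rho)^\sigma$ times a fixed Calderón–Zygmund-type kernel, yielding $L^2\to L^2$ boundedness with the factor $(N/\rho)^\sigma$. To upgrade to the $H^\kappa$ estimate one commutes $\Lambda^\kappa$ through: $\Lambda^\kappa[{\widetilde\Lambda}^{\sigma}_v,\varphi_{\rho,N}]=[{\widetilde\Lambda}^{\sigma}_v,\varphi_{\rho,N}]\Lambda^\kappa+[\Lambda^\kappa,[{\widetilde\Lambda}^{\sigma}_v,\varphi_{\rho,N}]]$, and the double commutator term, after distributing the $\kappa$ derivatives onto $\varphi_{\rho,N}$ (each producing another $N/\rho$), is estimated by the same kernel argument, producing the $(N/\rho)^{\kappa+\sigma}\norm f_0$ term. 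Estimate \eqref{+com2} follows by iterating: $[{\widetilde\Lambda}^{\sigma}_v,[{\widetilde\Lambda}^{\sigma}_v,\varphi_{\rho,N}]]$ has kernel involving second differences of $\varphi_{\rho,N}$, controlled by $(N/\rho)^2\abs{v-w}^2$ against a kernel of order $2\sigma$, giving the net factor $(N/\rho)^{2\sigma}$.

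The main obstacle is keeping the dependence on $N/\rho$ explicit and uniform while dealing with the non-homogeneous symbol $p(\eta)^{1/2}$ (which interpolates between $\abs\eta^\sigma$ and $\abs\eta$ near $\abs\eta\sim 1$ but is smooth and equals $\abs\eta^\sigma$ for $\abs\eta\geq 2$): one must check that the $\abs\eta\leq 2$ contribution, which is an honest smoothing operator but whose kernel one still differentiates, contributes only lower-order terms that are absorbed into the right-hand side — this is where the constraint $\kappa\leq n+3$ (so that the relevant kernels are still integrable after the finitely many differentiations one performs, à la Lemma \ref{3.1} with $\mu\leq n+2$) enters. A secondary technical point is that $\varphi_{\rho,N}$ depends on $v$ only, so all commutators are in the $v$-variable and the $(t,x)$-frequencies ride along passively inside $\Lambda^\kappa$; one has to be slightly careful that $\Lambda^\kappa$ mixes all variables, but since $[{\widetilde\Lambda}^{\sigma}_v,\varphi_{\rho,N}]$ is translation-invariant in $(t,x)$ it commutes with $D_t,D_x$, so only the $D_v$-part of $\Lambda^\kappa$ generates genuine commutator terms, which is exactly the computation sketched above.
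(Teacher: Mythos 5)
Your proposal follows essentially the same route as the paper: reduce to the $v$-variable by partial Fourier transform in $(t,x)$ (your observation that the commutator is translation-invariant in $(t,x)$ so the $(t,x)$-frequencies ride along passively), represent $|D_v|^\sigma$ by the singular kernel $C_\sigma|\tilde v|^{-n-\sigma}$ so that the commutator has the difference kernel $m(v-w)(\varphi_{\rho,N}(w)-\varphi_{\rho,N}(v))$, split near/far at the scale $\rho/N$ using $|\varphi_{\rho,N}(w)-\varphi_{\rho,N}(v)|\leq\min\{2,C(N/\rho)|v-w|\}$ together with Young's inequality for convolutions, dispose of the low-frequency modification $|D_v|^\sigma-\widetilde\Lambda^\sigma_v$ as a bounded operator, and then obtain $H^\kappa$ by distributing derivatives onto $\varphi_{\rho,N}$ (Leibniz / commuting $\Lambda^\kappa$ through) with interpolation handling non-integer $\kappa$; the double commutator \eqref{+com2} is the same iteration. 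The paper's proof is this argument written out, so no comparison is needed.
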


\begin{rmk}
Observe for $\tilde\rho=\frac{(N-1)\rho}{N},$\:
$\varphi_{\rho,N}{\widetilde\Lambda}^{\sigma}_v(1-\varphi_{\tilde\rho,N})f
=-\varphi_{\rho,N}\,[{\widetilde\Lambda}^{\sigma}_v,
\varphi_{\tilde\rho,N}]\,f.$ Then as a consequence of \reff{+com1},
we have
  \begin{equation*}
  \|\varphi_{\rho,N}{\widetilde\Lambda}^{\sigma}_v(1-\varphi_{\tilde\rho,N})f \|_\kappa
    \leq C_{\sigma,n}\set{
\inner{N/\rho}^\sigma\norm{f}_{\kappa}
+\inner{N/\rho}^{\kappa+\sigma}\norm{f}_0}.
\end{equation*}
Hence, in the following, we omit the detailed discussions for such
terms.
\end{rmk}

\begin{proof} To simplify the notation, in the course of the proof, we shall use $C$
 to
denote a constant which depend only on $n$ and $\sigma$ and may be
different in different contexts. We denote by $(\tau, \xi, \eta)$
the Fourier transformation variable of $(t,x,v)$. $\mc F_{t,x}(g),
\:\mc F_v(g)$ are the partial Fourier transforms, and $\hat g$ is
the full Fourier transform with respect to $(t,x,v).$ Set
\[
h=[{\widetilde\Lambda}^{\sigma}_v,~\vpi_{\rho,N}]f, \quad
H(v)=H_{\tau,\xi}(v)=\mc F_{t,x}(f)(\tau, \xi, v)
\]
In the following discussion, we always write $H(v)$ for
$H_{\tau,\xi}(v)$, if there is  no risk of causing the  confusion.
It is clear that
\begin{equation}\label{relation}
\mc F_{t,x}(h)(\tau, \xi,
v)=[{\widetilde\Lambda}^{\sigma}_v,~\vpi_{\rho,N}]\mc
F_{t,x}(f)(\tau,\xi,v)=[{\widetilde\Lambda}^{\sigma}_v,~\vpi_{\rho,N}]
H(v) .
\end{equation}
Observe that the desired inequality \reff{+com1} will follow if we
show that, for each fixed pair $(\tau,\xi),$
\begin{equation}\label{+++com1}
  \norm{\bigcom{\widetilde\Lambda_v^\sigma,\:\varphi_{\rho,N}(\cdot)}
  H(\cdot)}_{H^\kappa(\Real^n_v)}
  \leq C\Big\{\inner{N/\rho}^\sigma\norm{
  H(\cdot)}_{H^\kappa(\Real^n_v)}
  +\inner{N/\rho}^{\kappa+\sigma}
  \norm{H(\cdot)}_{L^2(\Real^n_v)}\Big\}.
\end{equation}
Indeed, a direct computation yields that
\begin{align*}
&\norm{h}_\kappa^2=\int_{\bb
R^{2n+1}}(1+\tau^2+|\xi|^2+|\eta|^2)^\kappa \big|\hat{
h}(\tau,\xi,\eta)\big|^2d\tau d\xi d\eta\\
&\leq C\int_{\bb
R^{2n+1}}\set{(1+\tau^2+|\xi|^2)^\kappa+|\eta|^{2\kappa}} \big|\hat{
h}(\tau,\xi,\eta)\big|^2d\tau d\xi d\eta\\
&\leq C\int_{\bb R^{n+1}}(1+\tau^2+|\xi|^2)^\kappa\inner{\int_{\bb
R^{n}}\inner{1+|\eta|^2}^{\kappa}\big|\hat{ h}(\tau,\xi,\eta)\big|^2
d\eta}d\tau d\xi\\
&=C\int_{\bb
R^{n+1}}(1+\tau^2+|\xi|^2)^\kappa\inner{\norm{\bigcom{\widetilde\Lambda_v^\sigma,
\:\varphi_{\rho,N}(\cdot)}
  H_{\tau,\xi}(\cdot)}_{H^\kappa(\Real^n_v)}}d\tau d\xi.
\end{align*}
This along with \reff{+++com1} yields the desired inequality
\reff{+com1}.

Next, we shall prove \reff{+++com1}.  First, for any
$g\in\cS(\Real^{n}),$ we have
\begin{equation}\label{FourierM}
  |D_v|^\sigma\, g(v)=C_\sigma\int_{\Real^n}\frac{g(v)-g(v-\tilde
 v)}{\abs
  {\tilde v}^{n+\sigma}}\,d\tilde v
\end{equation}
with $C_\sigma\neq 0$ being a complex constant depending only on
$\sigma$ and the dimension $n.$

In fact,
\[
  \int_{\Real^n}\frac{g(v)-g(v-\tilde v)}{\abs
  {\tilde v}^{n+\sigma}}\,d\tilde v=\int_{\Real^n}\mc
  F_v(g)(\eta)\,e^{i\,v\cdot\eta}\left(
  \int_{\Real^n}\frac{1-e^{-i\,\tilde v\cdot\eta}}{\abs
  {\tilde v}^{n+\sigma}}\,d\tilde v\right)\,d\eta
\]
On the other hand, it is clear that
\[
  \int_{\Real^n}\frac{1-e^{-i\,\tilde v\cdot\eta}}{\abs
  {\tilde v}^{n+\sigma}}\,d\tilde v=\abs\eta^\sigma \int_{\Real^n}
  \frac{1-e^{-i\, u\cdot\frac{\eta}{\abs\eta}}}{\abs
  {u}^{n+\sigma}}\,d u.
\]
Observe that $\int_{\Real^n}\frac{1-e^{i\,
u\cdot\frac{\eta}{\abs\eta}}}{\abs
  {u}^{n+\sigma}}\,d u \neq 0$ is a complex constant depending only on
 $\sigma$ and
the dimension $n$, but independent of $\eta.$  Then the above two
equalities give \reff{FourierM}.

Next, we use \reff{FourierM} to get
\begin{align*}
  &|D_v|^\sigma\,\biginner{H(v)\varphi_{\rho,N}(v)}=
  C_\sigma\int_{\Real^n}\frac{H(v)\varphi_{\rho,N}(v)
  -H(v-\tilde v)\varphi_{\rho,N}(v-\tilde v)}{\abs
  {\tilde v}^{n+\sigma}}\,d\tilde v\\
  &=\varphi_{\rho,N}(v)|D_v|^\sigma\, H(v)
  +C_\sigma\int_{\Real^n}\frac{
  H(v-\tilde v)\biginner{\varphi_{\rho,N}(v)
  -\varphi_{\rho,N}(v-\tilde v)}}{\abs
  {\tilde v}^{n+\sigma}}\,d\tilde v,
\end{align*}
which gives that
\begin{align}\label{formula}
  \bigcom{|D_v|^\sigma,\:\varphi_{\rho,N}(v)}H(v)=
  C_\sigma\int_{\Real^n}\frac{H(v-\tilde
 v)\biginner{\varphi_{\rho,N}(v)
  -\varphi_{\rho,N}(v-\tilde v)}}{\abs
  {\tilde v}^{n+\sigma}}\,d\tilde v.
\end{align}
Let $\widetilde\chi_{\rho/N}$ be the characteristic function of the
set $\set{v;\: \abs v\leq \rho/N}.$ By the above expression, we
compute
\begin{align*}
  &\|\bigcom{|D_v|^\sigma,\:\varphi_{\rho,N}}H\|_{L^2(\Real^n_v)}^2=
  |C_\sigma|^2\int_{\Real^n}\abs{\int_{\Real^n}\frac{H(v-\tilde
 v)\biginner{\varphi_{\rho,N}(v)
  -\varphi_{\rho,N}(v-\tilde v)}}{\abs
  {\tilde v}^{n+\sigma}}\,d\tilde v}^2dv\\
  &\leq
 2|C_\sigma|^2\int_{\Real^n}\abs{\int_{\Real^n}\frac{\widetilde{\chi}_{\rho/N}(\tilde v)
   H(v-\tilde v)\biginner{\varphi_{\rho,N}(v)
  -\varphi_{\rho,N}(v-\tilde v)}}{\abs
  {\tilde v}^{n+\sigma}}\,d\tilde v}^2dv\\
  &\indent+2|C_\sigma|^2\int_{\Real^n}\abs{\int_{\Real^n}\frac{\inner{1-
  \widetilde{\chi}_{\rho/N}(\tilde v)}
  H(v-\tilde v)\biginner{\varphi_{\rho,N}(v)
  -\varphi_{\rho,N}(v-\tilde v)}}{\abs
  {\tilde v}^{n+\sigma}}\,d\tilde v}^2dv\\
  &\leq C\biginner{\sup\abs{\partial_v\,\varphi_{\rho,N}}}^2
\int_{\Real^n}\inner{\int_{\Real^n}\frac{\widetilde{\chi}_{\rho/N}(\tilde
v)
  \abs{H(v-\tilde v)}}{\abs
  {\tilde v}^{n+\sigma-1}}\,d\tilde v}^2dv\\
  &\indent+C\biginner{\sup\abs{\varphi_{\rho,N}}}^2
\int_{\Real^n}\inner{\int_{\Real^n}\frac{\inner{1-\widetilde{\chi}_{\rho/N}(\tilde
v)}
  \abs{H(v-\tilde v)}}{\abs
  {\tilde v}^{n+\sigma}}\,d\tilde v}^2dv\\
  &=:\mc A_1+\mc A_2,
\end{align*}
For the term $\mc A_1$, Young's inequality for convolutions gives
\[
\int_{\Real^n}\inner{\int_{\Real^n}\frac{\widetilde{\chi}_{\rho/N}(\tilde
v)
  \abs{H(v-\tilde v)}}{\abs
  {\tilde v}^{n+\sigma-1}}\,d\tilde v}^2dv\leq
  \norm{H}_{L^2(\Real_v)}^2\Big\|\frac{\widetilde\chi_{\rho/N}(
 v)}{\abs
  { v}^{n+\sigma-1}}\Big\|_{L^1(\Real_v)}^2.
\]
Then \reff{cutoff} with $|\alpha|=1$ and the following inequality
\[
  \norm{\frac{\widetilde\chi_{\rho/N}( v)}{\abs
  { v}^{n+\sigma-1}}}_{L^1(\Real_v)}^2\leq
 C\left(\int_0^{\rho/N}\frac{dr}{r^{\sigma}}\right)^2
  \leq C\inner{\rho/N}^{2(1-\sigma)}
\]
deduce that
\[
  \mc A_1\leq C
  \inner{N/\rho}^{2\sigma}\norm{H}_{L^2(\Real^n_v)}^2.
\]
Similarly, we can use \reff{cutoff}  with $|\alpha|=0$ and the
inequality
\begin{align*}
 \int_{\Real^n}\inner{\int_{\Real^n}\frac{\inner{1-\widetilde{\chi}_{\rho/N}(\tilde
v)}
  \abs{H(v-\tilde v)}}{\abs
  {\tilde v}^{n+\sigma}}\,d\tilde v}^2dv&\leq
  \norm{H}_{L^2(\Real_v)}^2\Big\|\frac{1-\widetilde\chi_{\rho/N}(
 v)}{\abs
  { v}^{n+\sigma}}\Big\|_{L^1(\Real_v)}^2\\
  &\leq C\inner{\rho/N}^{-2\sigma}\norm{H}_{L^2(\Real_v)}^2
\end{align*}
to get
\[
 \mc A_2\leq C
  \inner{N/\rho}^{2\sigma}\norm{H}_{L^2(\Real^n_v)}^2.
\]
On the other hand, it is trivial to see
\[
  \|\bigcom{\left(|D_v|^\sigma-\widetilde\Lambda_v^\sigma\right),
  \:\varphi_{\rho,N}}H\|_{L^2(\Real^n_v)}
  \leq C\norm{H}_{L^2(\Real^n_v)}.
\]
Now we combine these inequalities to conclude
\begin{equation}\label{0norm}
\|\bigcom{\widetilde\Lambda_v^\sigma,\:\varphi_{\rho,N}}H\|_{L^2(\Real^n_v)}
  \leq C\inner{N/\rho}^\sigma\norm{H}_{L^2(\Real^n_v)}.
\end{equation}

Next we treat $\|\bigcom{\widetilde\Lambda_v^\sigma,
\:\varphi_{\rho,N}}H\|_{H^\kappa(\Real^n_v)}.$ Similar to the above
argument,  we study only the commutator
$\|\bigcom{|D_v|^\sigma,\:\varphi_{\rho,N}}H\|_{H^\kappa(\Real^n_v)}.$
First, we consider the case when $\kappa$ is a positive integer. Let
$\alpha$ be an arbitrary multi-index with $\abs\alpha\leq \kappa.$
Then taking derivatives in \reff{formula}, and then using Leibnitz's
formula; we get
\begin{eqnarray*}
 &&
 \partial_v^\alpha\left(\bigcom{|D_v|^\sigma,\:\varphi_{\rho,N}(v)}H(v)\right)\\
 && =
C_\sigma\sum_{\beta\leq\alpha}C_\alpha^\beta\int_{\Real^n}\frac{\left(
  \partial_v^\beta H(v-\tilde v)\right)
  \cdot\left(\partial_v^{\alpha-\beta}\biginner{\varphi_{\rho,N}(v)
  -\varphi_{\rho,N}(v-\tilde v)}\right)}{\abs
  {\tilde v}^{n+\sigma}}\,d\tilde v.
\end{eqnarray*}
Thus  similar arguments as above show that
\begin{align*}
\norm{\partial_v^\alpha\Biginner{\bigcom{|D_v|^\sigma,\:
\varphi_{\rho,N}(v)}H(v)}}_{L^2(\Real^n_v)}
  \leq
 C\sum_{\beta\leq\alpha}\inner{N/\rho}^{\abs{\alpha-\beta}+\sigma}
  \norm{\partial_v^\beta H}_{L^2(\Real^n_v)}.
\end{align*}
Together with the interpolation inequality \reff{interpolation}, we
obtain
\begin{align*}
& \norm{\partial_v^\alpha\left(\bigcom{|D_v|^\sigma,\:
\varphi_{\rho,N}(v)}H(v)\right)}_{L^2(\Real^n_v)}
 \\
 & \leq C\set{\inner{N/\rho}^\sigma\norm{H}_{H^\kappa(\Real^n_v)}
  +\inner{N/\rho}^{\abs{\alpha}+\sigma}
  \norm{H}_{L^2(\Real^n_v)}}.
\end{align*}
Since $\alpha, \abs\alpha\leq\kappa,$ is arbitrary,   we conclude
\begin{align*}
  \norm{\bigcom{\abs{D_v}^\sigma,\:\varphi_{\rho,N}(v)}H(v)}_{H^\kappa(\Real^n_v)}
  \leq
 C\set{\inner{N/\rho}^\sigma\norm{H}_{H^\kappa(\Real^n_v)}+\inner{N/\rho}^{\kappa+\sigma}
  \norm{H}_{L^2(\Real^n_v)}}.
\end{align*}
This implies \reff{+++com1}, when $\kappa$ is a positive integer.

Now we consider the case when  $\kappa$ is not a integer.  Without
loss of generality, we may assume $0<\kappa<1.$ Write
$\kappa+\sigma=1+\mu.$  Then $0\leq \mu<1,$  and
\begin{align*}
  \norm{\bigcom{\abs{D_v}^{\kappa+\sigma},\:\varphi_{\rho,N}(v)}H(v)}_{L^2(\Real^n_v)}
 &\leq
 \norm{\bigcom{\abs{D_v}^\mu,\:\varphi_{\rho,N}(v)}H(v)}_{H^1(\Real^n_v)}\\
 &\indent
 +\norm{\bigcom{\abs{D_v}^1,\:\varphi_{\rho,N}(v)}\abs{D_v}^\mu
 H(v)}_{L^2(\Real^n_v)}.
\end{align*}
We have treated the first term on the right, that is,
\[
  \norm{\bigcom{\abs{D_v}^\mu,\:\varphi_{\rho,N}(v)}H(v)}_{H^1(\Real^n_v)}\leq
  C\set{\inner{N/\rho}^\mu\norm{H}_{H^1(\Real^n_v)}+\inner{N/\rho}^{1+\mu}
  \norm{H}_{L^2(\Real^n_v)}}.
\]
On the other hand, one has
\[
  \norm{\bigcom{\abs{D_v}^1,\:\varphi_{\rho,N}(v)}\abs{D_v}^\mu
  H(v)}_{L^2(\Real^n_v)}\leq
  C\inner{N/\rho}\norm{H}_{H^\mu(\Real^n_v)}.
\]
For the proof of this estimate, we refer to \cite{Dur} for instance.
Hence
\begin{align*}
 \norm{\bigcom{\abs{D_v}^{\kappa+\sigma},\:\varphi_{\rho,N}(v)}H(v)}_{L^2(\Real^n_v)}
 &\leq
 C\Big\{\inner{N/\rho}^\mu\norm{H}_{H^1(\Real^n_v)}+\inner{N/\rho}^{1+\mu}
  \norm{H}_{L^2(\Real^n_v)}\\
  &\indent+\inner{N/\rho}\norm{H}_{H^\mu(\Real^n_v)}\Big\}.
\end{align*}
Notice that $\kappa\geq 1,$. The interpolation inequality
\reff{interpolation} gives
\begin{align*}
& \norm{\bigcom{\abs{D_v}^{\kappa+\sigma},\:\varphi_{\rho,N}(v)}H(v)}_{L^2(\Real^n_v)}\\
 &\leq
 C\Big\{\inner{N/\rho}^\sigma\norm{H}_{H^\kappa(\Real^n_v)}+\inner{N/\rho}^{\kappa+\sigma}
  \norm{H}_{L^2(\Real^n_v)}\Big\}.
\end{align*}
 Since $0<\kappa<1,$ then
\begin{align*}
& \norm{\bigcom{\abs{D_v}^{\kappa},\:\varphi_{\rho,N}(v)}
\abs{D_v}^{\sigma}H(v)}_{L^2(\Real^n_v)}\\
 &\leq
 C\Big\{\inner{N/\rho}^\kappa\norm{H}_{H^\sigma(\Real^n_v)}+\inner{N/\rho}^{\kappa+\sigma}
  \norm{H}_{L^2(\Real^n_v)}\Big\}\\
 &\leq
 C\Big\{\inner{N/\rho}^\sigma\norm{H}_{H^\kappa(\Real^n_v)}+\inner{N/\rho}^{\kappa+\sigma}
  \norm{H}_{L^2(\Real^n_v)}\Big\}.
\end{align*}
In the last inequality, we have used the interpolation inequality
\reff{interpolation}. The above two inequalities yield that
\begin{align*}
&\norm{\abs{D_v}^{\kappa}\bigcom{\abs{D_v}^{\sigma},
\:\varphi_{\rho,N}(v)}H(v)}_{L^2(\Real^n_v)}\\
&\leq
\norm{\bigcom{\abs{D_v}^{\kappa+\sigma},\:\varphi_{\rho,N}(v)}H(v)}_{L^2(\Real^n_v)}\\
&\indent+\norm{\bigcom{\abs{D_v}^{\kappa},\:
\varphi_{\rho,N}(v)}\abs{D_v}^{\sigma}H(v)}_{L^2(\Real^n_v)}\\
&\leq
 C\Big\{\inner{N/\rho}^\sigma\norm{H}_{H^\kappa(\Real^n_v)}+\inner{N/\rho}^{\kappa+\sigma}
  \norm{H}_{L^2(\Real^n_v)}\Big\}.
\end{align*}
Hence
\begin{align*}
&\norm{\bigcom{\abs{D_v}^{\sigma},\:\varphi_{\rho,N}(v)}H(v)}_{H^\kappa(\Real^n_v)}\\
&\leq C\Big\{\norm{\abs{D_v}^{\kappa}\bigcom{\abs{D_v}^{\sigma},\:
\varphi_{\rho,N}(v)}H(v)}_{L^2(\Real^n_v)}\\
&\indent+\norm{\bigcom{\abs{D_v}^{\sigma},\:
\varphi_{\rho,N}(v)}H(v)}_{L^2(\Real^n_v)}\Big\}\\
&\leq
 C\Big\{\inner{N/\rho}^\sigma\norm{H}_{H^\kappa(\Real^n_v)}+\inner{N/\rho}^{\kappa+\sigma}
  \norm{H}_{L^2(\Real^n_v)}\Big\}.
\end{align*}
This implies \reff{+++com1} for general $\kappa, 1\leq\kappa\leq
n+2,$ and thus \reff{+com1} follows. The inequality \reff{+com2} can
be handled quite similarly. Thus the proof of Lemma \ref{+lem9} is
complete.

\end{proof}

%%%%%%%%%%%%%%%%%%%%%%%%%%%%%%%%%%%%%%%%%%%%%%%%%%%%%%%%%%%%%%%%%%%%%
%%%%%%%%%%%%%%%%%%%%%%%%%%%%%%%%%%%%%%%%%%%%%%%%%%%%%%%%%%%%%%%%%%%%%

\section{Gevrey regularity of linear operators}
\label{sect3+} \setcounter{equation}{0}

In this section, we prove  the Gevrey hypoellipticity of $\mc P$. We
will follow the idea of M.Durand \cite{Dur}. We consider the
following linear equation
\begin{equation}\label{Fokker-Planck++}
\mc{P}u=\partial_t u+v\cdot\partial_x
u+a(t,x,v)(-\widetilde\triangle_v)^\sigma u=f,\quad(t,x,
v)\in{\Real\times\Real^n\times\bb R^{n}},
\end{equation}
where $ 0<\sigma<1$. From Theorem \ref{th2.1}, any weak solution of
the above equation is in $C^\infty(\bb R^{2n+1})$ if $f\in
C^\infty(\bb R^{2n+1})$ . Hence, we start from a $C^\infty$
solution, and prove the Gevrey hypoellipticity in the following
proposition, where $\Omega$ and  $W=2\Omega$ are open domains of
$\mathbb{R}^{2n+1}$ defined in the section \ref{sect+3}.

\begin{prp}\label{prp4}
Set $\delta=\max\set{{\sigma\over4},~{\sigma\over2}-{1\over6}}$ and
let $s\geq {2\over\delta}$. Suppose the coefficient $ a(t,x,v) \in
G^s(\bar\Omega), a>0,$ and $u\in C^\infty(\bar W)$ be such that
$\mc{P}u=f\in G^s(\bar{\Omega})$. Then there exits a constant $L$
such that for any $r\in[0,1]$ and any $N\in\bb{N}$, $N\geq4,$
$$
(E)_{r, N} \quad\quad
\begin{array} {l}\|\Phi_{\rho,N}D^\alpha
u\|_{r+n+1}+\|\Phi_{\rho,N}{\widetilde\Lambda}^{\sigma} D^\alpha
u\|_{r-\frac{\delta}{2}+n+1}\\
\hskip 1cm \leq \frac{L^{|\alpha|-1}}{\rho^{(s+n)(|\alpha|-3)}}
\big((|\alpha|-3)!\big)^{s}\inner{\frac{N}{\rho}}^{sr}
\end{array}
$$
holds for any $\alpha\in \bb{N}^{2n+1},\:\:|\alpha|= N$ and  any $
0<\rho< 1$.
\end{prp}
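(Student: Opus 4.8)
The strategy is a double induction on $|\alpha|=N$ and on the number of $v$-derivatives contained in $\alpha$, in the spirit of Durand \cite{Dur}. Writing $\alpha=(\alpha_0,\alpha_x,\alpha_v)\in\NN\times\NN^n\times\NN^n$, I would first observe that the two norms in $(E)_{r,N}$ are, up to the subelliptic estimate \reff{sub0} of Proposition \ref{prp2} applied to $f=\Phi_{\rho,N}D^\alpha u$ (with the gain $\delta$ producing the shift by $\delta/2$ between the two terms after using the interpolation inequality \reff{interpolation}), controlled by $\|\mc P(\Phi_{\rho,N}D^\alpha u)\|_{r-\delta/2+n+1}+\|\Phi_{\rho,N}D^\alpha u\|_0$. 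So everything reduces to estimating the commutator $[\mc P,\Phi_{\rho,N}D^\alpha]u$ plus the term $\Phi_{\rho,N}D^\alpha f$. The latter is harmless because $f\in G^s(\bar\Omega)$, so $\|D^\alpha f\|$ is bounded by $C^{|\alpha|+1}(\alpha!)^s$, and one checks this fits the right-hand side of $(E)_{r,N}$ for $L$ large.

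The commutator $[\mc P,\Phi_{\rho,N}]D^\alpha u=[X_0,\Phi_{\rho,N}]D^\alpha u+[a\widetilde\Lambda_v^{2\sigma},\Phi_{\rho,N}]D^\alpha u$ is the heart of the matter. For the transport part, $[X_0,\Phi_{\rho,N}]$ is multiplication by $(\partial_t+v\cdot\partial_x)\Phi_{\rho,N}$, which by \reff{cutoff} costs a factor $N/\rho$; here Lemma \ref{3.1} is used to absorb the derivative landing on the cutoff, and the lost power of $N/\rho$ is exactly balanced by passing from $\Phi_{\rho,N}$ to a fatter cutoff $\Phi_{\rho',N}$ with $\rho'=\frac{N-1}{N}\rho$ (this is the standard ``loss of one step in the chain of cutoffs'' bookkeeping; note $|\alpha|=N$ so we have $N$ steps available). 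For the kinetic part I would expand using the identity already displayed in the proof of Proposition \ref{prp2},
\[
[a\widetilde\Lambda_v^{2\sigma},\Phi_{\rho,N}]
=2a[\widetilde\Lambda_v^\sigma,\Phi_{\rho,N}]\widetilde\Lambda_v^\sigma
+a\bigcom{\widetilde\Lambda_v^\sigma,[\widetilde\Lambda_v^\sigma,\Phi_{\rho,N}]}
+[a,\Phi_{\rho,N}]\widetilde\Lambda_v^{2\sigma},
\]
and apply Lemma \ref{+lem9}: \reff{+com1}--\reff{+com2} give that the first two terms cost $(N/\rho)^\sigma$ and $(N/\rho)^{2\sigma}$ respectively while keeping a factor $\widetilde\Lambda_v^\sigma u$ or $u$ to which the inductive hypothesis $(E)$ (with fewer $v$-derivatives, or the same $|\alpha|$ but a smaller Sobolev index via interpolation) applies. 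The term $[a,\Phi_{\rho,N}]\widetilde\Lambda_v^{2\sigma}D^\alpha u$ needs the Gevrey regularity of $a$ together with $a>c_0>0$ (Remark 1.1): one divides and uses $\widetilde\Lambda_v^{2\sigma}D^\alpha u=a^{-1}(f-X_0u-[\,\cdot\,])$ recursively, or more simply estimates $\widetilde\Lambda_v^\sigma D^\alpha u$ by the second term in $(E)_{r,N}$ with the smaller index.

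The delicate combinatorial point, which I expect to be the main obstacle, is the accounting of the Gevrey factor $((|\alpha|-3)!)^s\rho^{-(s+n)(|\alpha|-3)}$ through these recursions. Each time a derivative of $\Phi_{\rho,N}$ or of $a$ is produced, one either loses a power of $N/\rho$ (compensated by shrinking $\rho$ along the cutoff chain, using $N=|\alpha|$ and the elementary inequality $(1-1/N)^{-(s+n)N}\le C$) or one transfers a derivative onto $a$, invoking $a\in G^s$ to pay $C^{|\beta|+1}(\beta!)^s$ and using the convexity $(\beta!)^s((|\alpha|-|\beta|)!)^s\le(\alpha!)^s$. Choosing $\delta\ge 2/s$, i.e. $s\ge 2/\delta$, is precisely what makes the gain $\delta/2$ per step outrun the $\sigma$ (or $2\sigma$) loss in the commutator estimates, since $\delta\le\sigma$ and one must convert two ``$\widetilde\Lambda_v^\sigma$'' factors' worth of loss against the subelliptic gain; this is why the threshold $2/\delta$ appears. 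Once the iterative inequality closes with a fixed $L$ independent of $N,\rho,\alpha$, Proposition \ref{prp4} follows by induction; the base cases $|\alpha|\le 3$ are immediate from $u\in C^\infty(\bar W)$ by choosing $L$ large.
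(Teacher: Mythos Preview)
Your overall architecture---subelliptic estimate plus commutator control plus Gevrey bounds on $a$ and $f$---is right, but two concrete points keep the argument from closing.

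First, the commutator you must estimate is $[\mc P,\Phi_{\rho,N}D^\alpha]u$, not $[\mc P,\Phi_{\rho,N}]D^\alpha u$. You silently dropped the piece $\Phi_{\rho,N}[\mc P,D^\alpha]u$, and this is precisely where the Gevrey regularity of $a$ enters: $\Phi_{\rho,N}[a,D^\alpha]\widetilde\Lambda_v^{2\sigma}u=\sum_{0<\gamma\leq\alpha}C_\alpha^\gamma\,\Phi_{\rho,N}(D^\gamma a)\,\widetilde\Lambda_v^{2\sigma}D^{\alpha-\gamma}u$, which is the term the paper calls $(III)$ and handles by combining the Gevrey bound on $D^\gamma a$ with the inductive hypothesis on $\widetilde\Lambda_v^\sigma D^{\alpha-\gamma}u$. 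Your candidate location for the Gevrey input, namely $[a,\Phi_{\rho,N}]\widetilde\Lambda_v^{2\sigma}D^\alpha u$, is identically zero since $a$ and $\Phi_{\rho,N}$ are both multiplication operators.

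Second, the paper does not carry a secondary induction on the number of $v$-derivatives in $\alpha$. The induction is on $N=|\alpha|$ alone, but the inductive step is itself a bootstrap on $r$: from $(E)_{r,N-1}$ for all $r\in[0,1]$ one first proves $(E)_{0,N}$ (Lemma~\ref{r0}, by writing $D^\alpha=D\cdot D^\beta$ with $|\beta|=N-1$), then uses the subelliptic gain $\delta$ and the commutator machinery to climb to $(E)_{r,N}$ for $r\in[0,\delta/2]$ (Lemma~\ref{r1/3}), then to $r\in[\delta/2,\delta]$, and so on in steps of $\delta/2$ until $r=1$ is reached (Lemma~\ref{r1}). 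Without this inner staircase you cannot produce the factor $(N/\rho)^{sr}$ on the right for the full range $r\in[0,1]$, and the recursion you sketch (``same $|\alpha|$ but a smaller Sobolev index via interpolation'') is exactly this mechanism but you have not spelled out how to start it at $r=0$ nor why each $\delta/2$-step closes with a constant independent of $N$.
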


\begin{rmk}
 Here the Gevrey constant $L$ of $u$ is  determined by the
Gevrey constants $B_a$ and $B_f$ of the functions $a, f\in
G^s(\bar{\Omega})$, and depends only on $s, \sigma, n,
\norm{u}_{H^{n+6}(W)}$ and $\norm{a}_{C^{2n+2}(\Omega)}.$ This can
be seen in the  proof.
\end{rmk}

\bigbreak  As an immediate consequence of the above  proposition, we
have

\begin{prp}\label{prp5}

Under the same assumption as in Proposition \ref{prp4},  we have
$u\in G^s({\Omega}).$
\end{prp}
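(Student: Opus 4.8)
The plan is to deduce Proposition~\ref{prp5} from Proposition~\ref{prp4} by unwinding the definition of the Gevrey class~\reff{gevrey}. The key observation is that the estimate $(E)_{r,N}$ with the choice $r=0$ already contains (after dropping the second nonnegative term on the left and using $\Phi_{\rho,N}\equiv 1$ on $\Omega_\rho$) an inequality of the form
\[
\norm{D^\alpha u}_{H^{n+1}(\Omega_\rho)}\leq \frac{L^{|\alpha|-1}}{\rho^{(s+n)(|\alpha|-3)}}\bigl((|\alpha|-3)!\bigr)^s,\qquad |\alpha|=N,
\]
valid for every $0<\rho<1$ and every $N\geq 4$. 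First I would fix a compact subset $K\Subset\Omega$ and choose $\rho=\rho(K)>0$ small enough that $K\subset\Omega_\rho$; then the right-hand side above becomes a fixed constant to the power $|\alpha|$ times $\bigl((|\alpha|-3)!\bigr)^s$, which is bounded by $\tilde C^{|\alpha|+1}(|\alpha|!)^s$ for a suitable $\tilde C=\tilde C(K)$, since $(|\alpha|-3)!\leq |\alpha|!$ and the polynomial factor $\rho^{-(s+n)(|\alpha|-3)}$ is absorbed into the geometric constant. This is exactly the $L^2$-form of the Gevrey estimate recalled in the introduction, so $u\in G^s$ near each point of $\Omega$, whence $u\in G^s(\Omega)$.

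The one genuine point that needs care is the exponent bookkeeping: the bound in $(E)_{r,N}$ is stated in terms of $|\alpha|$ only through the combination $\bigl((|\alpha|-3)!\bigr)^s (N/\rho)^{sr}$, and one must check that with $r=0$ fixed and $\rho$ fixed this yields the standard form $C_K^{|\alpha|+1}(|\alpha|!)^s$ uniformly in $\alpha$. Here one uses the elementary facts that $(m-3)!\le m!$ and that $L^{m-1}\rho^{-(s+n)(m-3)}\le C_K^{m+1}$ with $C_K=\max\{L,\rho^{-(s+n)},\ldots\}$; the finitely many small values $|\alpha|<4$ are handled separately using $u\in C^\infty(\bar W)$. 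To pass from the $L^2(\Omega_\rho)$ bound on $D^\alpha u$ (with $n+1$ extra Sobolev derivatives, more than enough) to an $L^\infty$ bound on a compact $K\subset\Omega_\rho$, I would invoke the Sobolev embedding $H^{n+1}(\Real^{2n+1})\hookrightarrow L^\infty$, which is why the $n+1$ in $(E)_{r,N}$ is present; alternatively one simply cites the equivalence of the two formulations of~\reff{gevrey} stated in the introduction and stops at the $L^2$ estimate.

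I do not expect a serious obstacle here: Proposition~\ref{prp5} is a soft corollary and the real work is entirely in Proposition~\ref{prp4}. If anything, the only thing to be slightly careful about is that $(E)_{r,N}$ is proved for $N\ge 4$ and for $u\in C^\infty(\bar W)$, so the conclusion $u\in G^s(\Omega)$ is an \emph{interior} statement; the derivatives of all orders $\le 3$ contribute only a bounded additive constant and do not affect the Gevrey estimate. Thus the proof is: fix $K\Subset\Omega$, pick $\rho$ with $K\subset\Omega_\rho$, set $r=0$ in $(E)_{0,N}$, use $\Phi_{\rho,N}=1$ on $\Omega_\rho$ and Sobolev embedding to get $\norm{D^\alpha u}_{L^\infty(K)}\le C_K^{|\alpha|+1}(|\alpha|!)^s$ for all $\alpha$, and conclude $u\in G^s(\Omega)$.
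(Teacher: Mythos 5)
Your proposal is correct and follows essentially the same route as the paper: fix a compact $K\Subset\Omega$, choose $\rho_0$ with $K\subset\Omega_{\rho_0}$, apply $(E)_{0,N}$ to get $\|D^\alpha u\|_{L^2(K)}\leq\|\Phi_{\rho_0,N}D^\alpha u\|_{n+1}\leq L^{|\alpha|-1}\rho_0^{-(s+n)(|\alpha|-3)}\bigl((|\alpha|-3)!\bigr)^s\leq C_K^{|\alpha|+1}(|\alpha|!)^s$, absorb the finitely many orders $|\alpha|<4$ into the constant, and conclude via the $L^2$ characterization of $G^s$ recalled in the introduction. The paper stops at the $L^2$ estimate rather than invoking Sobolev embedding, but as you note both formulations are stated to be equivalent, so this is only a cosmetic difference.
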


Indeed, for any compact subset $K$ of $\Omega$, we have
$K\subset\Omega_{\rho_0}$ for some $\rho_0, ~0<\rho_0<1$. Then for
any $\alpha\in {\bb N}^{2n+1}, ~~|\alpha|=N\geq4$,  $(E)_{0, N}$
gives
\begin{equation*}
\begin{array}{lll}\|D^\alpha u\|_{L^2(K)}&\leq&\|\Phi_{\rho_0,N}D^\alpha
u\|_{n+1} \leq
\frac{L^{|\alpha|-1}}{{\rho_0}^{(s+n)(|\alpha|-3)}}\big((|\alpha|-3)!\big)^{s}
\leq \big({L\over{\rho_0}^{s+n}}\big)^{|\alpha|}(|\alpha|!)^s.
\end{array}
\end{equation*}
Taking $C_K={L\over{\rho_0}^{s+n}}+\norm u_{C^4(K)},$ then for all
$\alpha,$
\[
  \|D^\alpha u\|_{L^2(K)}\leq C_K^{\abs\alpha+1}(|\alpha|!)^s.
\]
The conclusion of Proposition \ref{prp5} follows.

\bigskip

\smallbreak \emph{Proof of Proposition \ref{prp4}.}  We prove the
esitimate $(E)_{r, N}$ by induction on $N$. In the proof, we use
$C_n$ to denote  constants which depend only on $n$, which may be
different in different contexts. Let $\Phi$ be an arbitrary fixed
function compactly supported in $W$ such that $\Phi=1$ in $\Omega.$
First, we prove the first step of the induction for $N=4$. For all
$\abs\alpha=4,$ we use \reff{cutoffnorm} in Lemma \ref{3.1} to
compute
\begin{align*}
    &\|\Phi_{\rho,3}D^\alpha u\|_{r+n+1}+
    \|\Phi_{\rho,3}{\widetilde\Lambda}^{\sigma} D^\alpha
    u\|_{r-\frac{\delta}{2}+n+1}\\
    &\leq C_n \inner{\frac{3}{\rho}}^{n+2}\set{\|\Phi D^\alpha u\|_{r+n+1}+
    \|\Phi {\widetilde\Lambda}^{\sigma} D^\alpha
    u\|_{r-\frac{\delta}{2}+n+1}},
\end{align*}
On the other hand , since $\abs\alpha=4,$
\[
   \|\Phi D^\alpha u\|_{r+n+1}+
    \|\Phi {\widetilde\Lambda}^{\sigma} D^\alpha
    u\|_{r-\frac{\delta}{2}+n+1}\leq C_n \|u\|_{H^{n+6}(W)}.
\]
The term on the left side is bounded by the smoothness of $u.$
Combing these, we obtain

\[
  (E)_{r, 4} \qquad  \|\Phi_{\rho,3}D^\alpha u\|_{r+n+1}+
  \|\Phi_{\rho,3}{\widetilde\Lambda}^{\sigma} D^\alpha
u\|_{r-\frac{\delta}{2}+n+1} \leq
\frac{C_n\|u\|_{H^{n+6}(W)}}{\rho^{(n+2)}}\leq
\frac{L^3_0}{\rho^{s+n}}\,\,.
\]
Thus $(E)_{r, 4}$ is true if we take $L\geq
C_n\|u\|_{H^{n+6}(W)}+1.$ Let now $N>4$ and assume that $(E)_{r,
N-1}$ holds for any $r\in[0, 1]$. We  need to show $(E)_{r, N}$
still holds with a constant $L$ independents of $N$ or $r\in[0,~1]$.
We denote
\[
\|D^ju\|_r=\sum_{|\gamma|=j}\|D^\gamma u\|_r.
\] In the
following discussion, we fix $N.$ For each $0<\rho<1,$ define
$\tilde\rho={{N-1}\over N}\rho, \: \tilde{\tilde\rho}={{N-2}\over
N}\rho.$ Let $\Phi_{\rho,N}$ be the cutoff function constructed in
the previous section which satisfies the property \reff {cutoff}.
The following fact will be used frequently
\begin{equation}\label{fre}
{1\over{\rho}^{(s+n)k}}\leq{1\over{\tilde\rho}^{(s+n)k}}
\leq{1\over{\tilde{\tilde\rho}}^{(s+n)k}} ={1\over{\rho}^{(s+n)k}}
\times\big({N\over{N-2}}\big)^{(s+n)k} \leq
{10^{s+n}\over{\rho}^{sk}},\quad k=1,2,\cdots, N.
\end{equation}

We shall proceed to prove the truth of $(E)_{r, N}$  by the
following four lemmas. The first one is a technical lemma, and the
second lemma is devote to the proof of the truth of $(E)_{r, N}$ for
$r=0.$ In the third one, we prove that $(E)_{r,N}$ holds for $0\leq
r\leq{\delta\over 2},$ and in the last one we prove that $(E)_{r,
N}$ holds for all $r$ with $0\leq r\leq1.$

\begin{lem}\label{tech}
  Let $s\geq 3$ be a given real number and  $k\geq 5$ be any given integer. Assume
  \begin{equation}\label{assum}
    \norm{\Phi_{\rho,m}D^\gamma u}_{n+1}+
    \norm{\Phi_{\rho,m}\widetilde \Lambda^\sigma D^\gamma
    u}_{-\frac{\delta}{2}+n+1}
    \leq\frac{L^{m-1}}{\rho^{(s+n)(m-3)}}\big((m-3)!\big)^{s}
  \end{equation}
holds for all $\gamma$ with $\abs\gamma=m<k,$ and all $0<\rho<1.$
Then if $L\geq 4^{n+3}(\norm{u}_{H^{n+6}(W)}+1),$ one has, for all
$\beta$ with $\abs\beta=k,$
  \begin{equation}\label{desired}
    (k/\rho)^{n+3}\norm{\Phi_{\rho,k}D^\beta u}_{0}+(k/\rho)^{n+3}
    \norm{\Phi_{\rho,k}\widetilde \Lambda^\sigma D^\beta
    u}_{0}
    \leq\frac{L^{k-2}}{\rho^{(s+n)(k-3)}}\big((k-3)!\big)^{s}.
  \end{equation}
\end{lem}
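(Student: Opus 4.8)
The plan is to prove \reff{desired} by ``peeling off'' derivatives from $D^\beta u$ until the order of differentiation drops strictly below $k$, and then feeding the outcome into the hypothesis \reff{assum}. What makes the peeling a \emph{local} operation is a compatibility property of the cut-off functions from \reff{cutoff}: put $\tilde\rho=\frac{k-1}{k}\rho$ (the parameter already used in the proof of Proposition~\ref{prp4}). Then $\mathrm{supp}\,\Phi_{\rho,k}$ is a compact subset of the open set $\Omega_{\tilde\rho}$, on which $\Phi_{\tilde\rho,m}\equiv1$; hence $\Phi_{\tilde\rho,m}$, together with every derivative of it, is constant ($1$, resp.\ $0$) on $\mathrm{supp}\,\Phi_{\rho,k}$. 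By Leibniz's rule this gives, for every smooth $g$ and every multi-index $\theta$,
\[
\Phi_{\rho,k}\,D^\theta g=\Phi_{\rho,k}\,D^\theta\biginner{\Phi_{\tilde\rho,m}\,g},\qquad
\Phi_{\rho,k}\,D^\theta\biginner{(1-\Phi_{\tilde\rho,m})\,g}=0 ,
\]
the second identity because $D^\theta$ is a differential (hence local) operator.

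For $\norm{\Phi_{\rho,k}D^\beta u}_0$ I would split $D^\beta=D^\theta D^\gamma$ with $|\gamma|=m$, $|\theta|=j:=k-m$, apply the first identity with $g=D^\gamma u$, and estimate (using $0\le\Phi_{\rho,k}\le1$ and $\norm{D^\theta w}_0\le\norm{w}_{j}$)
\[
\norm{\Phi_{\rho,k}D^\beta u}_0\le\norm{\Phi_{\tilde\rho,m}D^\gamma u}_{j}\le\norm{\Phi_{\tilde\rho,m}D^\gamma u}_{n+1}\qquad(j\le n+1),
\]
choosing $j$ as large as the constraints $j\le n+1$ and $m\ge4$ allow, i.e.\ $j=\min\set{n+1,k-4}$, $m=\max\set{k-n-1,4}$, and then invoking \reff{assum} at the order $m<k$ with $\rho$ replaced by $\tilde\rho$. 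The term $\norm{\Phi_{\rho,k}{\widetilde\Lambda}^{\sigma}_v D^\beta u}_0$ is treated the same way, commuting ${\widetilde\Lambda}^{\sigma}_v$ past $D^\theta$ and using both identities with $g={\widetilde\Lambda}^{\sigma}_v D^\gamma u$; the only difference is that \reff{assum} controls $\Phi_{\tilde\rho,m}{\widetilde\Lambda}^{\sigma}_v D^\gamma u$ only in $H^{-\delta/2+n+1}$, so one must peel one derivative fewer ($j\le n$). When $n$ is too small for this to leave any derivative to peel, one falls back on writing ${\widetilde\Lambda}^{\sigma}_v D^\gamma u=\varphi_{\tilde\rho,m}{\widetilde\Lambda}^{\sigma}_v D^\gamma u+[{\widetilde\Lambda}^{\sigma}_v,\varphi_{\tilde\rho,m}]D^\gamma u+{\widetilde\Lambda}^{\sigma}_v(1-\varphi_{\tilde\rho,m})D^\gamma u$ and bounding the last two pieces by Lemma~\ref{+lem9} and the Remark following it (they are of lower order, carrying an extra $\inner{N/\rho}^\sigma$, $\sigma<1$).

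The rest is bookkeeping. After multiplying by $(k/\rho)^{n+3}$ and using $\tilde\rho^{-(s+n)(m-3)}\le e^{s+n}\rho^{-(s+n)(m-3)}$ (valid since $\biginner{\tfrac{k}{k-1}}^{k-1}<e$ and $m\le k$), the desired estimate reduces to comparing $(k/\rho)^{n+3}e^{s+n}L^{m-1}\rho^{-(s+n)(m-3)}\biginner{(m-3)!}^{s}$ with $\dfrac{L^{k-2}}{\rho^{(s+n)(k-3)}}\biginner{(k-3)!}^{s}$. On the right one has three reserves: an extra power $L^{k-m-1}$ of $L$ (strictly positive whenever $k-m\ge2$, i.e.\ at least two derivatives were peeled); an extra power $\rho^{-(s+n)(k-m)}$, which since $s\ge3$ already dominates the prefactor's $\rho^{-(n+3)}$; and the factorial ratio $\biginner{(k-3)!/(m-3)!}^{s}$, of size $\sim k^{(k-m)s}$, which for $s\ge3$ absorbs the polynomial $k^{n+3}$ and the constant $e^{s+n}$ once $k$ is larger than an explicit bound depending only on $n,s$. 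For the finitely many remaining (small) values of $k$ — in particular $k=5$ — one discards the peeling and uses the crude bound $\norm{\Phi_{\rho,k}D^\beta u}_0+\norm{\Phi_{\rho,k}{\widetilde\Lambda}^{\sigma}_v D^\beta u}_0\le C_n\norm{u}_{H^{n+6}(W)}$ (legitimate because $k\le n+5\le n+6$ and ${\widetilde\Lambda}^{\sigma}_v$ has order $<1$); here $(k/\rho)^{n+3}$ is harmless since $\rho^{-(n+3)}\le\rho^{-(s+n)(k-3)}$ and $k\le n+5$, and the result is $\le\dfrac{L^{k-2}}{\rho^{(s+n)(k-3)}}\biginner{(k-3)!}^{s}$ on using $L^{k-2}\ge L^{3}$ and the hypothesis $L\ge4^{n+3}(\norm{u}_{H^{n+6}(W)}+1)$.

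The step I expect to be the genuine obstacle is this last growth comparison together with the correct choice of the split: one must match the integer number of peelable derivatives against the non-integer loss $\delta/2$ in the hypothesis for the ${\widetilde\Lambda}^{\sigma}_v$-term (the delicate point in low dimension), and then verify that the factorial ratio really outweighs both the $(k/\rho)^{n+3}$-prefactor and the $\rho$-discrepancy for all $k$ beyond an explicit threshold, the crude bound taking care of the finitely many $k$ below it.
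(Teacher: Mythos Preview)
Your approach is correct and essentially identical to the paper's: split off small $k$ (the paper takes $5\le k\le n+4$) via the crude bound $\norm{u}_{H^{n+6}(W)}$, and for $k>n+4$ peel $|\tilde\beta|=n+1$ derivatives using $\Phi_{\tilde\rho,\,k-n-1}\equiv1$ on $\mathrm{supp}\,\Phi_{\rho,k}$ to reduce to the hypothesis at order $k-n-1$, then finish by the factorial/$\rho$-power comparison. Your observation that the $\widetilde\Lambda^\sigma$-term requires peeling only $n$ derivatives (because the hypothesis controls it in $H^{-\delta/2+n+1}$, not $H^{n+1}$) is a point the paper silently absorbs into ``in the same way''; your commutator fallback for ``low dimension'' is unnecessary, since $n\ge1$ always leaves at least one derivative to peel.
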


\begin{proof}
  Without loss of generality, we may assume $k>n+4,$ for, otherwise, in the case when
  $5\leq k\leq n+4 $,
  it is obvious that, for all $\beta$ with $\abs\beta=k\leq n+4,$
  \begin{eqnarray*}
   & (k/\rho)^{n+3}\norm{\Phi_{\rho,k}D^\beta u}_{0}+(k/\rho)^{n+3}
    \norm{\Phi_{\rho,k}\widetilde \Lambda^\sigma D^\beta
    u}_{0}&\\
   & \leq (1/\rho)^{(s+n)(k-3)}2^{(n+3)k}\norm{u}_{H^{n+6}(W)}.&
  \end{eqnarray*}
  Then the desired inequality \reff{desired} follows if $L\geq
  4^{n+3}\inner{\norm{u}_{H^{n+6}(W)}+1}.$

  \vspace{0.5ex}
  Now for all $\beta, \abs\beta=k>n+4,$ we can find a
  multi-index $\tilde\beta\leq\beta$ such that $|\tilde\beta|=n+1.$
  First we treat $(k/\rho)^{n+3}\norm{\Phi_{\rho,k}D^\beta u}_{0}.$
  Since $\Phi_{\frac{(k-1)\rho}{k}, k-n-1}=1$ in Supp\,$\Phi_{\rho,
  k},$ then the following relation is clear:
  \[
    \norm{\Phi_{\rho,k}D^\beta u}_{0}=\norm{\Phi_{\rho,k}
    D^{\tilde\beta}\Phi_{\frac{(k-1)\rho}{k}, k-n-1}D^{\beta-\tilde\beta} u}_{0}
    \leq \norm{
    \Phi_{\frac{(k-1)\rho}{k}, k-n-1}D^{\beta-\tilde\beta} u}_{n+1}.
  \]
  Observe $\abs{\beta-\tilde\beta}=k-n-1,$ then we use the above relation and the
  assumption \reff{assum} to compute, for $L\geq
  4^{n+3}\inner{\norm{u}_{H^{n+6}(W)}+1},$
  \begin{align*}
    (k/\rho)^{n+3}\norm{\Phi_{\rho,k}D^\beta u}_{0}&\leq (k/\rho)^{n+3}\norm{
    \Phi_{\frac{(k-1)\rho}{k}, k-n-1}D^{\beta-\tilde\beta}
    u}_{n+1}\\
    &\leq (k/\rho)^{n+3}\frac{L^{k-n-2}}{\rho^{(s+n)(k-n-4)}}\big((k-n-4)!\big)^{s}
    \\
    &\leq \frac{5(n/L)^{n}L^{k-2}}{\rho^{(s+n)(k-3)}}\big((k-3)!\big)^{s}
    \\
    &\leq {\frac 1 2}\frac{L^{k-2}}{\rho^{(s+n)(k-3)}}\big((k-3)!\big)^{s}
    .
  \end{align*}
  In the same way, we can get the estimate on the term  $(k/\rho)^{n+3}
    \norm{\Phi_{\rho,k}\widetilde \Lambda^\sigma D^\beta
    u}_{0},$ that is,
    \begin{align*}
    (k/\rho)^{n+3}
    \norm{\Phi_{\rho,k}\widetilde \Lambda^\sigma D^\beta
    u}_{0}\leq {\frac 1 2}\frac{L^{k-2}}{\rho^{(s+n)(k-3)}}\big((k-3)!\big)^{s}
    .
  \end{align*}
  Thus by the above two inequalities, we get the desired inequality
  \reff{desired}. This completes the proof.
\end{proof}

\begin{lem}\label{r0}
Assume that $(E)_{r, N-1}$ is true for any $r\in[0, 1]$.  Then there
exists a constant $C_1,$ depending only on the Gevrey index $s$ and
the dimension $n$, such that, if $ L\geq \max\Big\{2^{s+1}B_a,\,
B_f, \,4^{s+1}\norm{u}_{H^{n+6}(W)}+4^{s+1}\Big\},$
\begin{equation}\label{rho0}
\|\Phi_{\rho,N}D^\alpha
u\|_{n+1}+\|\Phi_{\rho,N}{\widetilde\Lambda}^{\sigma} D^\alpha
u\|_{-{\delta\over2}+n+1}\leq \frac{C_1
L^{|\alpha|-2}}{\rho^{(s+n)(|\alpha|-3)}}\big((|\alpha|-3)!\big)^{s}
\end{equation}
for any $\alpha\in \bb{N}^{2n+1},\:\:|\alpha|= N, $ and any
$0<\rho<1$.
\end{lem}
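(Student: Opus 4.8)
The goal is to establish the base estimate $(E)_{r,N}$ at $r=0$, i.e. the bound \reff{rho0}, assuming the full induction hypothesis $(E)_{r,N-1}$ for all $r\in[0,1]$. The natural strategy is to apply the subelliptic estimate \reff{sub0} of Proposition \ref{prp2} with the test function $f=\Phi_{\rho,N}D^\alpha u$ (localized with a slightly larger cutoff where needed, in view of \reff{T}), taking $r=n+1$ there so that after the Sobolev embedding the left-hand side controls $\|\Phi_{\rho,N}D^\alpha u\|_{n+1}+\|\Phi_{\rho,N}\widetilde\Lambda^\sigma D^\alpha u\|_{-\delta/2+n+1}$ (here I use $\|\widetilde\Lambda^\sigma g\|_{-\delta/2+n+1}\le \|\widetilde\Lambda^\sigma g\|_{n+1}$ together with \reff{T}, or more sharply absorb the gain $\delta$). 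Thus everything reduces to estimating $\|\mc P\,\Phi_{\rho,N}D^\alpha u\|_{n+1}$ by the right-hand side of \reff{rho0} plus a small multiple of the quantity being estimated.

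First I would commute: write
\[
\mc P\,\Phi_{\rho,N}D^\alpha u=\Phi_{\rho,N}D^\alpha\mc P u+\bigcom{\mc P,\ \Phi_{\rho,N}D^\alpha}u
=\Phi_{\rho,N}D^\alpha f+\bigcom{\mc P,\ \Phi_{\rho,N}D^\alpha}u.
\]
The first term is handled by $f\in G^s(\bar\Omega)$ and the Gevrey constant $B_f$, producing a contribution bounded by $B_f^{|\alpha|+1}(\alpha!)^s$, absorbable once $L\ge B_f$ (with room to spare because of the $\rho$-weights and the shift from $|\alpha|$ to $|\alpha|-3$). For the commutator term I would expand $\mc P=X_0+a\widetilde\Lambda_v^{2\sigma}$ and split according to the three sources of commutators: (i) $\bigcom{X_0,\ \Phi_{\rho,N}D^\alpha}$, a first-order operator whose coefficients are derivatives of $\Phi_{\rho,N}$ bounded via \reff{cutoffnorm} by powers $(N/\rho)^{|\gamma|}$, $|\gamma|\le 2$; (ii) commutators of $D^\alpha$ with the coefficient $a$, handled by the Leibniz formula, the Gevrey hypothesis $a\in G^s$ with constant $B_a$, and a standard Gevrey combinatorial estimate on $\sum_{\beta\le\alpha}\binom{\alpha}{\beta}(B_a)^{|\beta|+1}(\beta!)^s\|\cdots D^{\alpha-\beta}u\|$; (iii) commutators of $\Phi_{\rho,N}$ with $\widetilde\Lambda_v^{2\sigma}=\widetilde\Lambda_v^\sigma\cdot\widetilde\Lambda_v^\sigma$, which is exactly where Lemma \ref{+lem9} (the estimates \reff{+com1}, \reff{+com2} and the Remark following it) enters, producing terms with the cutoff $\varphi_{\tilde\rho,N}$ and gains $(N/\rho)^\sigma$, $(N/\rho)^{2\sigma}$. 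In each case the resulting expressions involve $\Phi_{\tilde\rho,N}D^\beta u$ or $\Phi_{\tilde\rho,N}\widetilde\Lambda^\sigma D^\beta u$ with $|\beta|\le N-1$, to which the induction hypothesis $(E)_{r,N-1}$ applies (choosing the Sobolev order $r$ appropriately within $[0,1]$ so the $(N/\rho)^{sr}$ factor matches the powers of $N/\rho$ generated above), and \reff{fre} lets me pass freely between $\rho$, $\tilde\rho$, $\tilde{\tilde\rho}$ at the cost of a harmless constant like $10^{s+n}$.

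The bookkeeping then is: every commutator term yields something of the form $\big(\text{const}\big)\cdot (N/\rho)^{(\text{power})}\cdot\frac{L^{|\beta|-1}}{\rho^{(s+n)(|\beta|-3)}}\big((|\beta|-3)!\big)^s$ with $|\beta|\in\{N-1,N-2\}$; using $(N/\rho)^{\text{power}}\le C\,\rho^{-(s+n)(\text{steps})}N^{\text{power}}$ and $N^{c}\big((N-k)!\big)^s\le C^N\big((N-3)!\big)^s$ for bounded $c$, these collapse into $\frac{C\,L^{N-2}}{\rho^{(s+n)(N-3)}}\big((N-3)!\big)^s$ — note one factor of $L$ is "lost" in the commutator (we have $L^{|\beta|-1}=L^{N-2}$ or $L^{N-3}$, not $L^{N-1}$), which is precisely why \reff{rho0} carries $L^{|\alpha|-2}$ rather than $L^{|\alpha|-1}$, leaving a spare power of $L$ to absorb the universal constant $C_1$ at the next stage. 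The genuinely delicate points, which I expect to be the main obstacle, are two-fold: \textbf{(a)} getting the powers of $N/\rho$ to come out at or below $sr$ with $r\le 1$ — this forces the use of the sharp gain $\delta$ in the subelliptic estimate and the careful placement of the $-\delta/2$ shift in $(E)_{r,N}$, since the worst term, from (iii) with a double commutator of order $2\sigma$ combined with the need to apply $\widetilde\Lambda^\sigma$-estimates \reff{T}, threatens to produce a power exceeding $s$ unless $s\ge 2/\delta$; and \textbf{(b)} the low-order terms in the Leibniz expansion in (ii), where $D^{\alpha-\beta}u$ has order below $4$ and the induction hypothesis does not apply — these must be absorbed using the a priori smoothness $u\in C^\infty(\bar W)$ and Lemma \ref{tech}, which is exactly the technical device provided for this purpose. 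Once all terms are bounded by $\tfrac12$ of the target plus the stated right-hand side, absorbing the $\tfrac12$ and invoking the interpolation inequality \reff{interpolation} inside the subelliptic estimate finishes the proof of \reff{rho0}.
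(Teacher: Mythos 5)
Your approach — apply the subelliptic estimate \reff{sub0} to $\Phi_{\rho,N}D^\alpha u$ and estimate the commutator $[\mc P,\Phi_{\rho,N}D^\alpha]u$ — is not what the paper does for Lemma \ref{r0}, and as written it has a genuine gap that I believe is precisely the reason the paper does not do it this way.

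The trouble is your claim that ``in each case the resulting expressions involve $\Phi_{\tilde\rho,N}D^\beta u$ or $\Phi_{\tilde\rho,N}\widetilde\Lambda^\sigma D^\beta u$ with $|\beta|\le N-1$, to which $(E)_{r,N-1}$ applies.'' This is false for your sources (i) and (iii). The commutator $[X_0,\Phi_{\rho,N}]D^\alpha u$ still carries $D^\alpha u$, and $\Phi_{\rho,N}[X_0,D^\alpha]u$ produces $\Phi_{\rho,N}D^{\alpha_0}u$ with $|\alpha_0|=|\alpha|=N$ (commuting $v\cdot\partial_x$ past a $\partial_{v_j}$ trades a $v$-derivative for an $x$-derivative, it does not lower the total order). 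Likewise the term from (iii), $a[\widetilde\Lambda_v^{2\sigma},\Phi_{\rho,N}]D^\alpha u$, when processed via Lemma \ref{+lem9}, produces $(N/\rho)^\sigma\|\Phi_{\tilde\rho,N}\widetilde\Lambda^\sigma D^\alpha u\|_{\cdot}$ and $(N/\rho)^{2\sigma}\|\Phi_{\tilde\rho,N}D^\alpha u\|_{\cdot}$ — still with $|\alpha|=N$. Controlling exactly these quantities is the content of \reff{rho0}, so the argument is circular. In the paper's Lemma \ref{r1/3} this same decomposition \emph{is} used, but there the right-hand side carries the extra factor $(N/\rho)^{s\delta/2}$ which absorbs the $(N/\rho)^\sigma$ losses, and — crucially — Lemma \ref{r0} (already proved) is invoked to bound the $N$-th order pieces $\|\Phi_{\tilde\rho,N}\widetilde\Lambda^\sigma D^\alpha u\|$. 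You cannot run that argument to obtain Lemma \ref{r0} itself, since there is no $(N/\rho)^{s\cdot 0}$ slack at $r=0$ and no prior bound to feed in.

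The paper's actual proof of Lemma \ref{r0} is entirely elementary and bypasses $\mc P$ altogether: write $D^\alpha=D\,D^\beta$ with $|\beta|=N-1$, insert $\Phi_{\tilde\rho,N-1}$ (which equals $1$ on $\mathrm{supp}\,\Phi_{\rho,N}$), and estimate
\[
\|\Phi_{\rho,N}D^\alpha u\|_{n+1}\le C_n\Big\{\|\Phi_{\tilde\rho,N-1}D^\beta u\|_{n+2}+(N/\rho)\|\Phi_{\tilde\rho,N-1}D^\beta u\|_{n+1}+(N/\rho)^{n+2}\|\Phi_{\tilde\rho,N-1}D^\beta u\|_{0}\Big\}
\]
using Lemma \ref{3.1}. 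The first two terms are controlled directly by the induction hypothesis $(E)_{r,N-1}$ at $r=1$ and $r=0$ respectively, the third by Lemma \ref{tech} with $k=N-1$, and \reff{fre} handles the shift $\tilde\rho\to\rho$. The companion bound on $\|\Phi_{\rho,N}\widetilde\Lambda^\sigma D^\alpha u\|_{-\delta/2+n+1}$ goes the same way. The subelliptic estimate and Lemma \ref{+lem9} enter only afterwards, in Lemma \ref{r1/3}, once Lemma \ref{r0} is available as input.
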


\begin{rmk} In fact, this is $(E)_{r, N}$ for $r=0$
if we choose $L$ such that  $ L\geq C_1$ and $L\geq
\max\Big\{2^{s+1}B_a,\, B_f,\,
4^{s+1}\norm{u}_{H^{n+6}(W)}+4^{s+1}\Big\}$.
\end{rmk}

\begin{proof}

We choose a multi-index $\beta$ with $\abs\alpha=\abs\beta+1$. Then
$|\beta|=N-1$. Recall $\tilde\rho={{N-1}\over N}\rho.$  By the
construction, $\Phi_{\tilde\rho, N-1}=1$ in Supp\,$\Phi_{\rho,N}.$
Thus
\begin{align*}
&\|\Phi_{\rho,N}D^\alpha u\|_{n+1} \leq\|\Phi_{\rho, N}D^\beta
u\|_{1+n+1}+\|(D\Phi_{\rho, N})D^\beta u\|_{n+1}\\
&\leq\|\Phi_{\rho, N}\Phi_{\tilde\rho, N-1}D^\beta
u\|_{1+n+1}+\|(D\Phi_{\rho, N})\Phi_{\tilde\rho, N-1}D^\beta
u\|_{n+1}\\&\leq C_n\set{\|\Phi_{\tilde\rho, N-1}D^\beta
u\|_{1+n+1}+(N/\rho)\|\Phi_{\tilde\rho, N-1}D^\beta
u\|_{n+1}+(N/\rho)^{n+2}\|\Phi_{\tilde\rho, N-1}D^\beta u\|_{0}},
\end{align*}
In the last inequality, we have used  Lemma \ref{3.1}. For the third
term on the right-hand side, we use Lemma \ref{tech} with $k=N-1$ to
obtain
\begin{align*}
  (N/\rho)^{n+2}\|\Phi_{\tilde\rho, N-1}D^\beta u\|_{0}&=
  {{N-1}\over{\tilde\rho}}\set{\inner{\frac{N-1}{\tilde\rho}}^{n+1}
  \|\Phi_{\tilde\rho, N-1}D^\beta u\|_{0}}\\
  &\leq {{|\beta|}\over{\tilde\rho}}
  \frac{L^{|\beta|-2}}{\tilde\rho^{(s+n)(|\beta|-3)}}\big((|\beta|-3)!\big)^{s}\\
  &\leq
  \frac{2L^{|\alpha|-2}}{\tilde\rho^{(s+n)(|\alpha|-3)}}\big((|\alpha|-3)!\big)^{s}.
\end{align*}
Applying the relation \reff{fre}, we get
\begin{align*}
  (N/\rho)^{n+2}\|\Phi_{\tilde\rho, N-1}D^\beta u\|_{0}\leq
  \frac{20^{s+n}L^{|\alpha|-2}}{\rho^{(s+n)(|\alpha|-3)}}\big((|\alpha|-3)!\big)^{s}.
\end{align*}

On the other hand, by the induction assumption that $(E)_{r,N-1}$
holds for any $r$ with $0\leq r\leq1$, we have immediately
\begin{align*}
  &\|\Phi_{\tilde\rho, N-1}D^\beta
u\|_{1+n+1}+(N/\rho)\|\Phi_{\tilde\rho, N-1}D^\beta
u\|_{n+1}\\
  &\leq \frac{L^{|\beta|-1}}{\tilde\rho^{(s+n)(|\beta|-3)}}
     \big((|\beta|-3)!\big)^{s}(N/\tilde\rho)^{s}
 +(N/\rho)\frac{L^{|\beta|-1}}{\tilde\rho^{(s+n)(|\beta|-3)}}\big((|\beta|-3)!\big)^{s}\\
  &\leq \frac{2L^{|\alpha|-2}}{{\tilde\rho}^{(s+n)(|\alpha|-3)}}
     \big((|\alpha|-3)!\big)^{s}\big(N/(N-3)\big)^{s}\\
  &\leq\frac{30^{s+n}L^{|\alpha|-2}}{\rho^{(s+n)(|\alpha|-3)}}\big((|\alpha|-3)!\big)^{s}.
\end{align*}
Thus
\begin{equation*}%\label{+inequ}
\|\Phi_{\rho,N}D^\alpha u\|_{n+1}\leq \frac{30^{s+n} C_n
L^{|\alpha|-2}}{\rho^{(s+n)(|\alpha|-3)}}\big((|\alpha|-3)!\big)^{s}.
\end{equation*}
By  exactly the same calculation, we obtain
\begin{eqnarray*}
\|\Phi_{\rho,N}{\widetilde\Lambda}^{\sigma} D^\alpha
u\|_{-{\delta\over2}+n+1} \leq \frac{30^{s+n}
C_nL^{|\alpha|-2}}{\rho^{(s+n)(|\alpha|-3)}}\big((|\alpha|-3)!\big)^{s}.
\end{eqnarray*}
Taking $C_1=60^{s+n} C_n$ with $C_n$ being the constant appearing in
Lemma \ref{3.1},  we obtain \reff{rho0}. This completes the proof of
Lemma \ref{r0}.

\end{proof}

\begin{lem}\label{r1/3}
Assume that $(E)_{r, N-1}$ is true for any $r\in[0, 1]$. Then there
exists a constant $C_2,$ depending only on $\sigma, $ the Gevrey
index $s,$ the dimension $n$ and $\norm{u}_{H^{n+6}(W)},
\norm{a}_{C^{n+2}(\bar \Omega)},$ such that for any $0\leq r\leq
\frac{\delta}{2}$, if
$$
L\geq \max\Big\{2^{s+1}B_a,\, B_f,
\,4^{s+1}\norm{u}_{H^{n+6}(W)}+4^{s+1}\Big\}
$$
with $B_a, B_f$ being the Gevrey constants of $a, f\in
G^s(\bar\Omega)$, we have that
\begin{equation}\label{r}
\|\Phi_{\rho,N}D^\alpha
u\|_{r+n+1}+\|\Phi_{\rho,N}{\widetilde\Lambda}^{\sigma} D^\alpha
u\|_{r-\frac{\delta}{2}+n+1}\leq
\frac{C_{2}L^{|\alpha|-2}}{\rho^{(s+n)(|\alpha|-3)}}\big((|\alpha|-3)!\big)^{s}(N/\rho)^{rs},
\end{equation}
for any $\alpha\in \bb{N}^{2n+1},\:\:|\alpha|= N$.
\end{lem}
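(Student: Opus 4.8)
The plan is to prove $(E)_{r,N}$ for $0\le r\le\frac{\delta}{2}$ by applying the subelliptic estimate of Proposition \ref{prp2} to the function $g=\Phi_{\rho,N}\Lambda^{r}D^\alpha u$ (or, to stay within the $H^{n+1}$ scale used in $(E)_{r,N}$, to $\Phi_{\rho,N}\Lambda^{r+n+1}D^\alpha u$), and then commuting $\mc P$ past $\Phi_{\rho,N}$, past $\Lambda^r$, and past $D^\alpha$. Since $\Lambda^r$ has order $r\le\delta/2<\delta$, the gain $\delta$ in \reff{sub0} is more than enough to absorb the $\Lambda^r$, producing a factor $(N/\rho)^{rs}$ only through the commutator terms; the $r=0$ case is already Lemma \ref{r0}, so the content here is tracking that extra factor through the commutators. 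First I would write $\mc P\Phi_{\rho,N}\Lambda^{r+n+1}D^\alpha u$ as $\Phi_{\rho,N}\Lambda^{r+n+1}D^\alpha\mc P u$ plus commutator terms: $[X_0,\Phi_{\rho,N}]\Lambda^{r+n+1}D^\alpha u$, and the terms coming from $[a\widetilde\Lambda^{2\sigma}_v,\Phi_{\rho,N}\Lambda^{r+n+1}]$, which by the algebraic identity already displayed in the proof of Proposition \ref{prp2} split into a piece with one $[\widetilde\Lambda^\sigma_v,\varphi_{\rho,N}]$ and a surviving $\widetilde\Lambda^\sigma_v$, a double-commutator piece $[\widetilde\Lambda^\sigma_v,[\widetilde\Lambda^\sigma_v,\varphi_{\rho,N}]]$, and a piece $[a,\Phi_{\rho,N}\Lambda^{r+n+1}]\widetilde\Lambda^{2\sigma}_v u$. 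Also $D^\alpha$ must be commuted past the coefficient $a$, which is where the Gevrey hypothesis $a\in G^s$ and the constant $B_a$ enter, exactly as in Lemma \ref{r0}.

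The key estimates to invoke are: Lemma \ref{+lem9} for the commutators $[\widetilde\Lambda^\sigma_v,\varphi_{\rho,N}]$ and $[\widetilde\Lambda^\sigma_v,[\widetilde\Lambda^\sigma_v,\varphi_{\rho,N}]]$, which each cost a factor $(N/\rho)^\sigma$ on the top-order norm plus $(N/\rho)^{\kappa+\sigma}$ on the $L^2$ norm; Lemma \ref{3.1}/\reff{cutoffnorm} for the $[X_0,\Phi_{\rho,N}]$ and $[a,\Phi_{\rho,N}\Lambda^{r+n+1}]$ contributions, which cost $N/\rho$ per derivative falling on the cutoff; and \reff{T} to convert any residual $\|\widetilde\Lambda^\sigma_v(\cdot)\|$ back into $\|\mc P(\cdot)\|+\|\cdot\|$. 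The terms with $\widetilde\Lambda^\sigma_v D^\alpha u$ still attached are then controlled by the second quantity in the induction hypothesis $(E)_{r,N-1}$ (the $\widetilde\Lambda^\sigma$-norm at level $r-\delta/2+n+1$), which is precisely why that second term is carried along in $(E)_{r,N}$; the lower-order pieces are handled by Lemma \ref{tech} together with the relation \reff{fre} to pass between $\rho$, $\tilde\rho$, $\tilde{\tilde\rho}$. Throughout, the bookkeeping is: each of the at most $|\alpha|$ derivatives, and each factor $N/\rho$ or $(N/\rho)^\sigma$, must be matched either against a unit of $\big((|\alpha|-3)!\big)^s$ gained from lowering $|\alpha|$ to $|\alpha|-1$ via the induction, against a power $1/\rho^{s+n}$ absorbed into $L$, or — for the $r$-dependent part — against the budgeted factor $(N/\rho)^{rs}$; since $rs\ge 2r/\delta\cdot\delta\cdot\text{(something)}$... more precisely since $s\ge 2/\delta$ gives $rs\ge 2r/\delta\ge r/\delta\cdot 2$ and here $r\le\delta/2$ so $rs\ge\cdots$, one checks the exponents close with room to spare, the crucial inequality being that $sr$ dominates the $\sigma$- and order-counting coming from the commutators at level $r$.

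The main obstacle I expect is the term $[a,\Phi_{\rho,N}\Lambda^{r+n+1}]\widetilde\Lambda^{2\sigma}_v u$ together with the $D^\alpha$-commutation with $a$: one must distribute the $N=|\alpha|$ derivatives via Leibniz between $u$ and $a$, apply the Gevrey bound $\|\partial^\beta a\|_\infty\le B_a^{|\beta|+1}(|\beta|!)^s$ to the derivatives landing on $a$, and then recognize that the derivatives landing on $u$ give back a term of the form appearing in $(E)_{r',N'}$ with $N'<N$ — but now with an extra $\widetilde\Lambda^{2\sigma}_v$, which must be split as $\widetilde\Lambda^\sigma_v\cdot\widetilde\Lambda^\sigma_v$ and absorbed, one factor against \reff{T}, the other against the induction hypothesis at a shifted Sobolev level. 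Keeping all the shifted indices ($r$ versus $r-\delta/2$ versus $r-\delta/2+2\sigma$, noting $2\sigma$ may exceed $\delta/2+$ available room, which is exactly where $\delta=\max\{\sigma/4,\sigma/2-1/6\}\le\sigma$ and the choice of the norm levels $n+1$ and $-\delta/2+n+1$ in $(E)_{r,N}$ are calibrated) consistent, and verifying that the combinatorial sum over Leibniz splittings converges with a constant independent of $N$ (using $\sum_{\beta\le\alpha}\binom{\alpha}{\beta}^{-1}\le C_n$-type bounds, or the standard $\sum\frac{1}{(|\beta|+1)^2}$ trick to eat the $(|\beta|!)^s(|\alpha-\beta|!)^s/(|\alpha|!)^s$ factors since $s\ge 3>1$), is the delicate part; everything else is a routine iteration of Propositions \ref{prp2}, Lemmas \ref{3.1}, \ref{+lem9}, \ref{tech} in the manner already rehearsed in Lemma \ref{r0}.
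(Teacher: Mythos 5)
Your plan is essentially the paper's proof. Both proceed by (a) bounding the commutator $[\mc P,\Phi_{\rho,N}D^\alpha]u$ at Sobolev level $-\delta/2+n+1$ — splitting $[a\widetilde\Lambda^{2\sigma}_v,\Phi_{\rho,N}D^\alpha]$ via the single/double commutators $[\widetilde\Lambda^\sigma_v,\varphi_{\rho,N}]$ and $[\widetilde\Lambda^\sigma_v,[\widetilde\Lambda^\sigma_v,\varphi_{\rho,N}]]$ controlled by Lemma \ref{+lem9}, plus the Leibniz expansion $[a,D^\alpha]\widetilde\Lambda^{2\sigma}_v u$ where $a\in G^s$ and $B_a$ enter — then (b) using $\mc Pu=f\in G^s$ to control $\|\mc P\Phi_{\rho,N}D^\alpha u\|_{-\delta/2+n+1}$, then (c) applying the subelliptic estimate \reff{sub0} to recover $\|\Phi_{\rho,N}D^\alpha u\|_{\delta/2+n+1}$, and (d) an energy argument of the type \reff{T} for the companion $\widetilde\Lambda^\sigma$-norm; the lower-order remnants are handled by Lemma \ref{tech} and \reff{fre}, the Leibniz sum by $s-1\geq 3$, exactly as you say.

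The one organizational difference: the paper does not carry a variable $\Lambda^{r+n+1}$ inside the test function. It proves the estimate \emph{only} at the single value $r=\delta/2$ (using \reff{sub0} with the Sobolev index set to $-\delta/2+n+1$ acting on $\Phi_{\rho,N}D^\alpha u$), and then gets general $r\in[0,\delta/2]$ by interpolating \reff{interpolation} between this and the $r=0$ case of Lemma \ref{r0}. Your variant of sandwiching $\Lambda^{r+n+1}$ into $g$ works but introduces extra commutators $[\Lambda^{r+n+1},\Phi_{\rho,N}]$ and $[\Lambda^{r+n+1},a]$ that the paper avoids; neither is hard, but the paper's route is cleaner. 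You should also note the exponent bookkeeping in the double-commutator term $(N/\rho)^{2\sigma}$ is not absorbed by $(N/\rho)^{rs}$ alone; the paper peels off one derivative ($\alpha=\beta+1$) to trade $(N/\rho)$ for a factorial, and uses $1-\delta/2+2\sigma<s$ — this is where "room to spare" is actually earned, so it is worth making explicit rather than waving at.
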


\begin{proof}

In this proof, we shall use $\widetilde C_j, j\geq0,$ to denote
different constants which are greater than 1 and depend  only on $s,
\sigma, n, \norm{u}_{H^{n+6}(W)}$ and $\norm{a}_{C^{2n+2}(\Omega)}.$
The conclusion  will follow  if we prove that
\begin{equation}\label{r-theta}
  \|\Phi_{\rho, N}D^\alpha u\|_{\frac{\delta}{2}+n+1}+\|\Phi_{\rho,
 N}{\widetilde\Lambda}^{\sigma} D^\alpha u\|_{n+1}
  \leq\frac{\widetilde
 C_{0}L^{|\alpha|-2}}{\rho^{(s+n)(|\alpha|-3)}}\big((|\alpha|-3)!\big)^{s}
     (N/\rho)^{\frac{s\delta}{2}}.
\end{equation}
Indeed, from \reff{r-theta} we know that \reff{r} is true for
$r={\delta\over2}.$    The truth of \reff{r} for the general $r,
0\leq r\leq \frac{\delta}{2},$ follows  from the interpolation
inequality (\ref{interpolation}) and Lemma \ref{r0}.

To prove \reff{r-theta}, we shall proceed in the following four
steps.

\bigskip {\bf Step 1.} In this step we prove
\begin{equation}
\label{step+1}
 \|a[{\widetilde\Lambda}^{2\sigma},~~\Phi_{\rho,N}D^\alpha]u\|_{-\frac{\delta}{2}+ n+1}
    \leq\frac{\widetilde C_1 L^{|\alpha|-2}}{\rho^{(s+n)(|\alpha|-3)}}
    \big((|\alpha|-3)!\big)^{s}(N/\rho)^{\frac{s\delta}{2}}.
\end{equation}

Recall $\Phi_{\rho,N}(t,x,v)=\psi_{\rho, N}(t,x)\vpi_{\rho, N}(v)$
with $\psi_{\rho, N},\vpi_{\rho, N}$ being the cut-off functions
constructed in Section 3.  First, notice that $\psi_{\tilde\rho,
N}=1$ in the support of \:$\psi_{\rho,N},$ and $\vpi_{\tilde\rho,
N}=1$ in the support of \:$\vpi_{\rho,N}$. It then follows that
 \begin{align*}
 &\|a[{\widetilde\Lambda}^{2\sigma},~~\Phi_{\rho,N}D^\alpha]
 u\|_{-\frac{\delta}{2}+
 n+1}=\|a[{\widetilde\Lambda}^{2\sigma},~~\varphi_{\rho,N}]\psi_{\rho,N}D^\alpha
 u\|_{-\frac{\delta}{2}+
 n+1}\\
 &\leq C_a\set{
 \|[{\widetilde\Lambda}^{\sigma},~~\varphi_{\rho,N}]\psi_{\rho,N}
 {\widetilde\Lambda}^{\sigma}D^\alpha u\|_{-\frac{\delta}{2}+
 n+1}+\|[{\widetilde\Lambda}^{\sigma},\:
 [{\widetilde\Lambda}^{\sigma},~~\varphi_{\rho,N}]\:]\psi_{\rho,N}D^\alpha u\|_{-\frac{\delta}{2}+
 n+1}}\\
 &\leq C_a\Big\{
 \|[{\widetilde\Lambda}^{\sigma},~~\varphi_{\rho,N}]\psi_{\rho,N}\,\psi_{\tilde\rho,N}\,
 \vpi_{\tilde\rho,N}
 {\widetilde\Lambda}^{\sigma}D^\alpha u\|_{-\frac{\delta}{2}+
 n+1}\\
 &\indent+\|[{\widetilde\Lambda}^{\sigma},\:
 [{\widetilde\Lambda}^{\sigma},~~\varphi_{\rho,N}]\:]
 \psi_{\rho,N}\,\psi_{\tilde\rho,N}\,\vpi_{\tilde\rho,N}D^\alpha u\|_{-\frac{\delta}{2}+
 n+1}\Big\}\\
 &= C_a\Big\{
 \|[{\widetilde\Lambda}^{\sigma},~~\varphi_{\rho,N}]\psi_{\rho,N}\Phi_{\tilde\rho,N}
 {\widetilde\Lambda}^{\sigma}D^\alpha u\|_{-\frac{\delta}{2}+
 n+1}\\
 & \,\,\,\,\,\,\,\, +\|[{\widetilde\Lambda}^{\sigma},\:
 [{\widetilde\Lambda}^{\sigma},~~\varphi_{\rho,N}]\:]
 \psi_{\rho,N}\Phi_{\tilde\rho,N}D^\alpha u\|_{-\frac{\delta}{2}+
 n+1}\Big\}\\
 &=:(S_1)+(S_2),
 \end{align*}
where $C_a$  is a constants depending only on the coefficient $a$
through $\norm a_{C^{n+2}(\bar\Omega)}.$ To estimate the term
$(S_1)$, we apply the inequality \reff{+com1} in Lemma \ref{+lem9}
and then \reff{cutoffnorm} in Lemma \ref{3.1}. This gives
 \begin{align*}
 (S_1)&\leq
 C_aC_{\sigma,n}\Big\{\inner{N/\rho}^\sigma\norm{\psi_{\rho,N}\Phi_{\tilde\rho,
 N}{\widetilde\Lambda}^{\sigma} D^\alpha u}_{-\frac{\delta}{2}+
 n+1}\\
 &\hskip 1cm +\inner{N/\rho}^{n+1-\frac{\delta}{2}+\sigma}
 \norm{\psi_{\rho,N}\Phi_{\tilde\rho, N}{\widetilde\Lambda}^{\sigma}
 D^\alpha
 u}_{0}\Big\}\\
 &\leq
 C_aC_{\sigma,n}\set{\inner{N/\rho}^\sigma\norm{\Phi_{\tilde\rho,
 N}{\widetilde\Lambda}^{\sigma} D^\alpha u}_{-\frac{\delta}{2}+
 n+1}+\inner{N/\rho}^{n+1-\frac{\delta}{2}+\sigma}\norm{\Phi_{\tilde\rho, N}
 {\widetilde\Lambda}^{\sigma}
 D^\alpha
 u}_{0}}\\
 &=:(S_1)^{'}+(S_1)^{''}.
 \end{align*}
First, the estimate \reff{rho0} in Lemma \ref{r0} yields
\begin{align*}
(S_1)'&\leq C_aC_{\sigma,n}C_1\inner{{N\over\rho}}^\sigma \frac{
L^{|\alpha|-2}}{{\tilde\rho}^{(s+n)(|\alpha|-3)}}\big((|\alpha|-3)!\big)^{s}\\
&\leq \frac{\widetilde C_2
L^{|\alpha|-2}}{\rho^{(s+n)(|\alpha|-3)}}\big((|\alpha|-3)!\big)^{s}
\inner{{N\over\rho}}^{\frac{s\delta}{2}}.
\end{align*}
In the last inequality, we used the fact
$\frac{s\delta}{2}\geq1>\sigma.$  Next, we treat $(S_2)^{''}.$ By
virtue of the induction assumption,  the required condition
\reff{assum} in Lemma \ref{tech} is satisfied with $k=N$. It thus
follows from \reff{desired} that
 \begin{align*}
 (S_1)^{''}&\leq C_aC_{\sigma,n}\inner{\frac{N}{\rho}}^\sigma\frac{
 L^{|\alpha|-2}}{\tilde\rho^{(s+n)(|\alpha|-3)}}\big((|\alpha|-3)!\big)^{s}\\
 &\leq \frac{\widetilde C_2
 L^{|\alpha|-2}}{\rho^{(s+n)(|\alpha|-3)}}\big((|\alpha|-3)!\big)^{s}
 \inner{{N\over\rho}}^{\frac{s\delta}{2}}.
 \end{align*}
Thus
\begin{equation*}
(S_1) \leq \frac{\widetilde C_3
 L^{|\alpha|-2}}{\rho^{(s+n)(|\alpha|-3)}}\big((|\alpha|-3)!\big)^{s}
 \inner{{N\over\rho}}^{\frac{s\delta}{2}}.
\end{equation*}

\vspace{1ex} Now it remain to treat the term $(S_2).$  By the
similar arguments as above, the inequality \reff{+com2} in Lemma
\ref{+lem9} gives
\begin{align*}
 (S_2)\leq  \widetilde
 C_4\inner{N/\rho}^{2\sigma}\norm{\Phi_{\tilde\rho, N}D^\alpha
 u}_{-\frac{\delta}{2}+
 n+1}+\widetilde C_4
 \inner{N/\rho}^{n+1-\frac{\delta}{2}+2\sigma}\norm{\Phi_{\tilde\rho,
 N}D^\alpha u}_{0}:=\mc N_1+\mc N_2.
 \end{align*}
We first estimate $\mc N_1.$ Choose a multi-index $\beta$ with
$\abs\alpha=\abs\beta+1.$ Then the similar arguments as the proof of
Lemma \ref{r0} give
 \begin{align*}
   \norm{\Phi_{\tilde\rho, N}D^\alpha u}_{-\frac{\delta}{2}+n+1}
   &\leq C_n\Big\{\|\Phi_{\tilde{\tilde\rho}, N-1}D^\beta
   u\|_{(1-\frac{\delta}{2})+n+1}\\
   &+(N/\rho)\|\Phi_{\tilde{\tilde\rho}, N-1}D^\beta
   u\|_{-\frac{\delta}{2}+n+1}+(N/\rho)^{n+2-{\delta\over2}}
   \|\Phi_{\tilde{\tilde\rho}, N-1}D^\beta
   u\|_{0}\Big\}.
 \end{align*}
 We recall $\tilde{\tilde\rho}=\frac{(N-2)\rho}{N}.$
 By the interpolation inequality \reff{interpolation},
 \begin{align*}
  (N/\rho)\|\Phi_{\tilde{\tilde\rho}, N-1}D^\beta
   u\|_{-\frac{\delta}{2}+n+1}\leq  \|\Phi_{\tilde{\tilde\rho}, N-1}D^\beta
   u\|_{(1-\frac{\delta}{2})+n+1}+\inner{{N\over\rho}}^{n+2-{\delta\over2}}
   \|\Phi_{\tilde{\tilde\rho}, N-1}D^\beta u\|_{0}\Big\}.
 \end{align*}
Therefore
 \begin{align*}
   \norm{\Phi_{\tilde\rho, N}D^\alpha u}_{-\frac{\delta}{2}+n+1}
   &\leq C_n\Big\{\|\Phi_{\tilde{\tilde\rho}, N-1}D^\beta
   u\|_{(1-\frac{\delta}{2})+n+1}+(N/\rho)^{n+2-{\delta\over2}}
   \|\Phi_{\tilde{\tilde\rho}, N-1}D^\beta u\|_{0}\Big\}
 \end{align*}
Hence $\mc N_1\leq \mc N_{1,1}+\mc N_{1,2}$  with $\mc N_{1,1},\,
\mc N_{1,2}$ given by
\[
  \mc N_{1,1}=\widetilde C_5 \inner{\frac{N}{\rho}}^{2\sigma}
  \|\Phi_{\tilde{\tilde\rho}, N-1}D^\beta
   u\|_{(1-\frac{\delta}{2})+n+1},
   \quad   \mc N_{1,2}=\widetilde C_5
   \inner{\frac{N}{\rho}}^{n+2-{\delta\over2}+2\sigma} \|\Phi_{\tilde{\tilde\rho}, N-1}D^\beta
   u\|_{0}.
\]
Since $(E)_{r,N-1}$ holds for all $r\in[0,r],$ then it follows that
\begin{align*}
  \mc N_{1,1}&\leq \widetilde C_5\nrho^{2\sigma}
  \frac{L^{|\alpha|-2}}{\tilde{\tilde\rho}^{(s+n)(|\alpha|-4)}}\big((|\alpha|-4)!\big)^{s}
  \inner{\frac{N-1}{\tilde{\tilde\rho}}}^{s(1-\frac{\delta}{2})}
  \\
  &\leq \widetilde C_6\nrho^{2\sigma-\frac{s\delta}{2}}
  \frac{L^{|\alpha|-2}}{\rho^{(s+n)(|\alpha|-4)}}\big((|\alpha|-4)!\big)^{s}
  \inner{\frac{N-3}{\rho}}^{s}
  \\
  &\leq \widetilde C_6\nrho^{\frac{s\delta}{2}}
  \frac{L^{|\alpha|-2}}{\rho^{(s+n)(|\alpha|-3)}}\big((|\alpha|-3)!\big)^{s}
  .
\end{align*}
In the last inequality, we used again the fact
$\frac{s\delta}{2}\geq\sigma.$  For the  term $\mc N_{1,2}$, we use
Lemma \ref{tech} with $k=N-1.$ This gives
\begin{align*}
  \mc N_{1,2}&\leq \widetilde C_5
   \inner{\frac{N-2}{\tilde{\tilde\rho}}}^{n+2-{\delta\over2}+2\sigma}
   \inner{\frac{N-1}{\tilde{\tilde\rho}}}^{-(n+1)}\set{
   \inner{\frac{N-1}{\tilde{\tilde\rho}}}^{(n+1)} \|\Phi_{\tilde{\tilde\rho}, N-1}D^\beta
   u\|_{0}}\\
   &\leq \widetilde C_5
   \inner{\frac{N-1}{\tilde{\tilde\rho}}}^{1-{\delta\over2}+2\sigma}
  \frac{L^{|\beta|-2}}{\tilde{\tilde\rho}^{(s+n)(|\beta|-3)}}\big((|\beta|-3)!\big)^{s}.
\end{align*}
Since $1-{\delta\over2}+2\sigma<s,$ then it follows from the above
inequality that
\begin{align*}
  \mc N_{1,2}\leq
  \frac{\widetilde C_7L^{|\alpha|-2}}{{\rho}^{(s+n)(|\alpha|-3)}}\big((|\alpha|-3)!\big)^{s}.
\end{align*}
With the estimate on $\mc N_{1,1},$ one has
\begin{align*}
  \mc N_{1}=\mc N_{1,2}+\mc N_{1,2}\leq
  \frac{\widetilde C_8L^{|\alpha|-2}}{\rho^{(s+n)(|\alpha|-3)}}\big((|\alpha|-3)!\big)^{s}
  \nrho^{\frac{s\delta}{2}}
  .
\end{align*}

\vspace{0.5ex}
 In the following,  we treat $\mc N_2=\widetilde C_4
 \inner{N/\rho}^{n+1-\frac{\delta}{2}+2\sigma}\norm{\Phi_{\tilde\rho,
 N}D^\alpha u}_{0}.$ Using Lemma \ref{tech} with $k=N,$ we get
\begin{align*}
   \mc N_{2}&\leq \widetilde C_4
   \inner{\frac{N}{\rho}}^{n+2-{\delta\over2}+2\sigma}
   \inner{\frac{N}{{\tilde\rho}}}^{-(n+3)}\set{
   \inner{\frac{N}{{\tilde\rho}}}^{(n+3)} \|\Phi_{{\tilde\rho},
   N}D^\alpha
   u\|_{0}}\\
   &\leq \widetilde C_4
   \inner{\frac{N}{{\tilde\rho}}}^{\sigma}
  \frac{L^{|\alpha|-2}}{{\tilde\rho}^{(s+n)(|\alpha|-3)}}\big((|\alpha|-3)!\big)^{s}\\
  &\leq
  \frac{\widetilde C_{9}L^{|\alpha|-2}}{\rho^{(s+n)(|\alpha|-3)}}\big((|\alpha|-3)!\big)^{s}
  \inner{\frac{N}{{\rho}}}^{{{s\delta}\over2}}.
\end{align*}
Thus,
\begin{align*}
  (S_2)=\mc N_{1}+\mc N_{2}\leq
  \frac{\widetilde C_{10}
  L^{|\alpha|-2}}{\rho^{(s+n)(|\alpha|-3)}}\big((|\alpha|-3)!\big)^{s}
  \nrho^{\frac{s\delta}{2}}
  .
\end{align*}
With the estimate on $(S_1),$ we get the desired inequality
\reff{step+1}. This completes the proof of Step 1.

\vspace{1ex} {\bf Step 2.} In this step, we prove
\begin{equation}
\label{step1}
   \|[\p,~~ \Phi_{\rho,N}D^\alpha] u\|_{-\frac{\delta}{2}+n+1}
    \leq\frac{\widetilde C_{11}L^{|\alpha|-2}}{\rho^{(s+n)(|\alpha|-3)}}
    \big((|\alpha|-3)!\big)^{s}(N/\rho)^{\frac{s\delta}{2}}.
\end{equation}

\smallskip
Recall $\p=X_0+a{\widetilde\Lambda}^{2\sigma}$ with
$X_0=\partial_t+v\cdot\partial_x$. Then a direct computation deduces
that
\begin{eqnarray*}
\|[\p,~~ \Phi_{\rho,N}D^\alpha] u\|_{-\frac{\delta}{2}+
n+1}&\leq&\|[X_0,~~ \Phi_{\rho,N}D^\alpha]
u\|_{-\frac{\delta}{2}+n+1}+\|a[{\widetilde\Lambda}^{2\sigma},~~
\Phi_{\rho,N}D^\alpha] u\|_{-\frac{\delta}{2}+
n+1}\\&&+\|\Phi_{\rho,N}[a,~~
D^\alpha]{\widetilde\Lambda}^{2\sigma} u\|_{-\frac{\delta}{2}+n+1}\\
&=:&(I)+(II)+(III).
\end{eqnarray*}
We have already handled the second term in Step 1. It remains to
treat the first term $(I)$ and the third term $(III).$

Observe that $[X_0,~~ D^\alpha]$ equals to 0 or $D^{\alpha_0}$ for
some $\alpha_0$ with $|\alpha_0|\leq |\alpha|.$  A direct
verification yields
\begin{align*}
   (I) &\leq\|[X_0,~~ \Phi_{\rho,N}]D^\alpha u\|_{n+1}
             +\|\Phi_{\rho,N} D^{\alpha_0}u\|_{n+1}\\
       &\leq \|(D\Phi_{\rho,N})\Phi_{\tilde\rho,N}D^\alpha u\|_{n+1}
             +\|\Phi_{\rho,N} D^{\alpha_0}u\|_{n+1}\\
       &\leq C_n\Big\{~(N/\rho)\|\Phi_{\tilde\rho,N}D^\alpha
       u\|_{n+1}+
             (N/\rho)^{n+2}\|\Phi_{\tilde\rho,N}D^\alpha u\|_{0}
             +\|\Phi_{\rho,N} D^{\alpha_0}u\|_{n+1}~\Big\}.
\end{align*}
For the first term and the third term on the right-hand  side, using
\reff{rho0} in Lemma \ref{r0}, and noting that
$\frac{s\delta}{2}\geq1$, we obtain
\begin{align*}
   &C_n\Big\{~(N/\rho)\|\Phi_{\tilde\rho,N}D^\alpha u\|_{n+1}+
   \|\Phi_{\rho,N} D^{\alpha_0}u\|_{n+1}~\Big\}\\
   &\leq
   C_n\big(N/\rho+1\big)\frac{C_1L^{|\alpha|-2}}{{\tilde\rho}^{(s+n)
   (|\alpha|-3)}}\big((|\alpha|-3)!\big)^{s}
   \\
   &\leq \frac{\widetilde C_{12}L^{|\alpha|-2}}{\rho^{(s+n)(|\alpha|-3)}}
   \big((|\alpha|-3)!\big)^{s}(N/\rho)^{\frac{s\delta}{2}}.
\end{align*}
On the other hand, we use Lemma \ref{tech} with $k=N$ to get
\begin{align*}
  C_n(N/\rho)^{n+2}\|\Phi_{\tilde\rho,N}D^\alpha u\|_{0} \leq
  \frac{\widetilde C_{13}L^{|\alpha|-2}}{\rho^{(s+n)(|\alpha|-3)}}
   \big((|\alpha|-3)!\big)^{s}(N/\rho)^{\frac{s\delta}{2}}.
\end{align*}
Thus
\begin{equation*}\label{+I}
  (I)\leq \frac{\widetilde C_{14}L^{|\alpha|-2}}{\rho^{(s+n)(|\alpha|-3)}}
   \big((|\alpha|-3)!\big)^{s}(N/\rho)^{\frac{s\delta}{2}}.
\end{equation*}

\vspace{0.5ex} Now it remains to eatimate $(III)$. The Leibniz'
formula yields
\begin{align}\label{III1}
\begin{split}
  (III)&\leq \sum\limits_{0<|\gamma|\leq|\alpha|}
        C_\alpha^\gamma
  \big\|\Phi_{\rho,N}(D^{\gamma}a){\widetilde\Lambda}^{2\sigma}D^{\alpha-\gamma}
 u\big\|_{-\frac{\delta}{2}+n+1}\\
  &\leq C_n\sum\limits_{0<|\gamma|\leq|\alpha|}
       C_\alpha^\gamma
       \norm{D^{\gamma}a}_{C^{n+2}(\bar\Omega)}\cdot
       \|\Phi_{\rho,N}{\widetilde\Lambda}^{2\sigma}D^{\alpha-\gamma}
       u\big\|_{-\frac{\delta}{2}+n+1},
\end{split}
\end{align}
where $C_\alpha^\gamma=\frac{\alpha!}{\gamma!(\alpha-\gamma)!}$ are
the binomial coefficients. Since $a\in G^s(\bar\Omega)$, letting
$B_a$ be the Gevrey constant of Gevrey function $a$ on $\bar\Omega$,
we have
\begin{equation}\label{III2}
     \norm{D^{\gamma}a}_{C^{n+2}(\bar\Omega)}
     \leq B_a^{|\gamma|-1}\big((|\gamma|-2)!\big)^{s}~~
     {\rm if}\:\: |\gamma|\geq2, \quad \norm{D^{\gamma}a}_{C^{n+2}(\bar\Omega)}
     \leq B_a~~
     {\rm if}\:\:|\gamma|=0,~1.
\end{equation}
On the other hand, observe that
\begin{align*}
\|\Phi_{\rho,N}{\widetilde\Lambda}^{2\sigma}D^{\alpha-\gamma}u\|_{-\frac{\delta}{2}+n+1}
  \leq &
 \|[{\widetilde\Lambda}^{\sigma},~\Phi_{\rho,N}]
 {\widetilde\Lambda}^{\sigma}D^{\alpha-\gamma} u\|_{-\frac{\delta}{2}+n+1}
    \\
    &+\|\Phi_{\rho,N}{\widetilde\Lambda}^{\sigma}
 D^{\alpha-\gamma}u\|_{(\sigma-\frac{\delta}{2})+n+1}.
\end{align*}
We have handled in Step 1  the first term on the right hand. This
gives
\begin{align*}
  \|[{\widetilde\Lambda}^{\sigma},~\Phi_{\rho,N}]
  {\widetilde\Lambda}^{\sigma}D^{\alpha-\gamma} u\|_{-\frac{\delta}{2}+n+1}
    \leq  \frac{\widetilde C_{15}L^{|\alpha|-|\gamma|-2}}
  {\rho^{(s+n)(|\alpha|-|\gamma|-3)}}\big((|\alpha|-|\gamma|-3)!\big)^{s}
  (N/\rho)^{\frac{s\delta}{2}}.
\end{align*}
For the second term,  note that  $|\alpha|-|\gamma|\leq N-1$ for
$\gamma\neq0.$ We use the induction hypothesis that
 $(E)_{r,N-1}$ holds for all $r\in[0,1],$ to get, for $\gamma,
 0<\abs\gamma\leq\abs\alpha-3,$ that
\begin{align*}
  \|\Phi_{\rho,N}{\widetilde\Lambda}^{\sigma}
  D^{\alpha-\gamma}u\|_{(\sigma-\frac{\delta}{2})+n+1}
  \leq  \frac{L^{|\alpha|-|\gamma|-1}}
  {\rho^{(s+n)(|\alpha|-|\gamma|-3)}}\big((|\alpha|-|\gamma|-3)!\big)^{s}
  (N/\rho)^{s\inner{\sigma-\frac{\delta}{2}}}.
\end{align*}
Observe that
\begin{align*}
  (N/\rho)^{s\inner{\sigma-\frac{\delta}{2}}}\leq  (N/\rho)^{s}&\leq
  \frac{2^s(N-\abs\gamma-2)^s+2^s(\abs\gamma+2)^s}{\rho^s}\\
  &\leq
  16^s(2^s)^{\abs\gamma-1}(N-\abs\gamma-2)^s\rho^{-s},
\end{align*}
Thus  for $\gamma$ with
 $0<\abs\gamma\leq\abs\alpha-3=N-3,$ we have
\begin{align*}
  \|\Phi_{\rho,N}{\widetilde\Lambda}^{\sigma}
  D^{\alpha-\gamma}u\|_{(\sigma-\frac{\delta}{2})+n+1}
  \leq  \frac{16^s(2^s)^{\abs\gamma-1}L^{|\alpha|-|\gamma|-1}}
  {\rho^{(s+n)(|\alpha|-|\gamma|-2)}}\big((|\alpha|-|\gamma|-2)!\big)^{s}
  .
\end{align*}
Note that the above inequality still holds for $\gamma$ with
$\abs\gamma=\abs\alpha-2$ if we take $L\geq
4^{n+1}\inner{\norm{u}_{H^{n+6}(W)}+1}.$  Consequently, we combine
these inequalities to obtain, for $0<\abs\gamma\leq \abs\alpha-2,$
\begin{align*}
  \|\Phi_{\rho,N}{\widetilde\Lambda}^{2\sigma}D^{\alpha-\gamma}u\|_{-\frac{\delta}{2}+n+1}
  \leq \frac{\widetilde
 C_{16}(2^s)^{\abs\gamma-1}L^{|\alpha|-|\gamma|-1}}{\rho^{(s+n)(|\alpha|-|\gamma|-2)}}
    \big((|\alpha|-|\gamma|-2)!\big)^{s}\inner{N/\rho}^{\frac{s\delta}{2}}.
\end{align*}
This together with  \reff{III2} yields
\begin{align*}
   &\sum\limits_{2\leq|\gamma|\leq|\alpha|-2}
      C_\alpha^\gamma
      \norm{D^{\gamma}a}_{C^{n+2}(\bar\Omega)}\cdot\norm{\Phi_{\rho,N}
      {\widetilde\Lambda}^{2\sigma}D^{\alpha-\gamma} u}_{-\frac{\delta}{2}+n+1}\\
 &\leq\nrho^{\frac{s\delta}{2}}\sum\limits_{2\leq|\gamma|\leq|\alpha|-2}
     \frac{\abs\alpha!}{\abs\beta!(\abs\alpha-\abs\beta)!}
     (2^sB_a)^{|\gamma|-1}\big((|\gamma|-2)!\big)^{s}\\
&\hskip 1cm \times     \frac{\widetilde
 C_{16}L^{|\alpha|-|\gamma|}}{\rho^{(s+n)(|\alpha|-|\gamma|-2)}}
     \big((|\alpha|-|\gamma|-2)!\big)^{s}\\
 &\leq\frac{\widetilde
 C_{16}L^{|\alpha|-2}}{\rho^{(s+n)(|\alpha|-3)}}\inner{N/\rho}^{\frac{s\delta}{2}}
     \sum\limits_{2\leq|\gamma|\leq|\alpha|-2}
     \inner{{{2^sB_a}\over L}}^{|\gamma|-1}|\alpha|!
     \big((|\alpha|-4)!\big)^{s-1}
     \\
  &\leq\frac{\widetilde
 C_{16}L^{|\alpha|-2}}{\rho^{(s+n)(|\alpha|-3)}}\big((|\alpha|-3)!
 \big)^{s}\inner{N/\rho}^{\frac{s\delta}{2}}
     \sum\limits_{2\leq|\gamma|\leq|\alpha|-2}
     \inner{{{2^sB_a}\over L}}^{|\gamma|-1}
     \frac{\abs{\alpha}^{3}}{(|\alpha|-3)^{s-1}}.
\end{align*}
Observe that  $s-1\geq 3$ and  thus the series  in the last
inequality is bounded from above by a constant depending only on $n$
if we take $L>2^{s+1}B_a.$ Then we get
\begin{align*}
 \sum\limits_{2\leq|\gamma|\leq|\alpha|-2}
       C_\alpha^\gamma
       &\norm{D^{\gamma}a}_{C^{n+2}(\bar\Omega)}\cdot\norm{\Phi_{\rho,N}
       {\widetilde\Lambda}^{2\sigma}D^{\alpha-\gamma}
       u}_{-\frac{\delta}{2}+n+1}\\
 & \leq\frac{\widetilde C_{17}L^{|\alpha|-2}}{\rho^{(s+n)(|\alpha|-3)}}
       \big((|\alpha|-3)!\big)^{s}(N/\rho)^{\frac{s\delta}{2}}.
\end{align*}
For $|\gamma|=1, ~|\alpha|-1$ or $|\alpha|$, we can compute directly
    \begin{align*}
     \sum\limits_{|\gamma|=1,|\alpha|-1,\abs\alpha}C_\alpha^\gamma
       &\norm{D^{\gamma}a}_{C^{n+2}(\bar\Omega)}\cdot\norm{\Phi_{\rho,N}
       {\widetilde\Lambda}^{2\sigma}D^{\alpha-\gamma}
       u}_{-\frac{\delta}{2}+n+1}\\
      & \leq\frac{\widetilde C_{18}L^{|\alpha|-2}}{\rho^{(s+n)(|\alpha|-3)}}
       \big((|\alpha|-3)!\big)^{s}(N/\rho)^{\frac{s\delta}{2}}.
\end{align*}
Combination of the above two inequalities and \reff{III1} gives that
  \[ (III)\leq \frac{\widetilde
 C_{19}L^{|\alpha|-2}}{\rho^{(s+n)(|\alpha|-3)}}
     \big((|\alpha|-3)!\big)^{s}(N/\rho)^{\frac{s\delta}{2}}.
  \]
Consequently, the desired inequality \reff{step1} follows. This
completes the proof of Step 2.

\bigbreak {\bf Step 3.} In this  step, we prove that if $L\geq
\tilde B$ with $\tilde B$ the Gevrey constant  of  function $f,$
\begin{equation}
\label{step3}
 \|\p\Phi_{\rho,N}D^\alpha u\|_{-\frac{\delta}{2}+n+1}
 \leq\frac{\widetilde C_{20}L^{|\alpha|-2}}{\rho^{(s+n)(|\alpha|-3)}}
      \big((|\alpha|-3)!\big)^{s}(N/\rho)^{\frac{s\delta}{2}}.
\end{equation}

Indeed, observe that
\begin{align*}
\|\p\Phi_{\rho,N}D^\alpha u\|_{-\frac{\delta}{2}+n+1} \leq
\|[\p,~\Phi_{\rho,N}D^\alpha]
u\|_{-\frac{\delta}{2}+n+1}+\|\Phi_{\rho,N}D^\alpha\p
u\|_{-\frac{\delta}{2}+n+1}.
\end{align*}
Since $\p u =f \in G^s(\bar\Omega)$, then $\norm{D^\gamma \p
f}_{H^{n+2}(\Omega)}\leq \tilde
 B$ if $\abs\gamma<n+5,$ and
\[
 \norm{D^\gamma \p f}_{H^{n+2}(\Omega)}\leq \tilde
 B^{\abs\gamma-2}\inner{(\abs\gamma-n-5)!}^s, ~{\rm if ~}
 \abs\gamma\geq n+5.
\]
Hence,
\[
   \|\Phi_{\rho,N}D^\alpha\p u\|_{-\frac{\delta}{2}+n+1}\leq
   C_n (N/\rho)^{n+2}\norm{D^\alpha \p f}_{H^{n+2}(\Omega)}
     \leq\frac{\widetilde C_{21}\tilde B^{|\alpha|-2}}{\rho^{(s+n)(|\alpha|-3)}}
      \big((|\alpha|-3)!\big)^{s}.
\]
We take $L$ such that $L>\tilde B.$ Then the above inequality
together with \reff{step1} in Step 2 yields immediately the
inequality \reff{step3}.

\bigbreak {\bf Step 4.} In the last step we show \reff{r-theta}. And
hence the proof of Lemma \ref{r1/3} will be complete.

\smallskip

First we apply the subelliptic estimate (\ref{sub0}) to get
\[
  \|\Phi_{\rho,N}D^\alpha u\|_{\frac{\delta}{2}+n+1}
  \leq C(\Omega)\bigset{\|\p\Phi_{\rho,N}D^\alpha u\|_{-\frac{\delta}{2}+n+1}
    +\|\Phi_{\rho,N}D^\alpha u\|_{n+1}}
\]
with $C(\Omega)$ a constant depending only on the set $\Omega.$
Combining Lemma \ref{r0} with (\ref{step3}) in Step 3, we have
\begin{eqnarray}\label{N}
\|\Phi_{\rho,N}D^\alpha u\|_{\frac{\delta}{2}+n+1}\leq
\frac{\widetilde
C_{22}L^{|\alpha|-2}}{\rho^{(s+n)(|\alpha|-3)}}\big((|\alpha|-3)!\big)^{s}
(N/\rho)^{\frac{s\delta}{2}} .
\end{eqnarray}
Next, we prove
\begin{equation}\label{finall}
\|\Phi_{\rho,N}{\widetilde\Lambda}^{\sigma} D^\alpha
u\|_{\frac{\delta}{2}-\frac{\delta}{2}+n+1}\leq \frac{\widetilde
C_{23}L^{|\alpha|-2}}{\rho^{(s+n)(|\alpha|-3)}}\big((|\alpha|-3)!\big)^{s}
(N/\rho)^{\frac{s\delta}{2}} .
\end{equation}
Observe that
  \[\|\Phi_{\rho,N}{\widetilde\Lambda}^{\sigma} D^\alpha
 u\|_{\frac{\delta}{2}-\frac{\delta}{2}+n+1}
     \leq \|[{\widetilde\Lambda}^{\sigma},~\Phi_{\rho,N}] D^\alpha
 u\|_{n+1}
       +\|{\widetilde\Lambda}^{\sigma} \Phi_{\rho,N}D^\alpha
 u\|_{n+1}.\]
By the same method as that in Step 1, we get the estimate on the
first term of the right side, that is,
  \[
  \|[{\widetilde\Lambda}^{\sigma},~\Phi_{\rho,N}] D^\alpha u\|_{n+1}
       \leq
\frac{\widetilde
C_{24}L^{|\alpha|-2}}{\rho^{(s+n)(|\alpha|-3)}}\big((|\alpha|-3)!\big)^{s}
(N/\rho)^{\frac{s\delta}{2}} .
\]
Then it remains  to estimate the second term. A direct calculation
gives that
\begin{align*}
  &\|{\widetilde\Lambda}^{\sigma}\Phi_{\rho,N}D^\alpha u\|_{n+1}^2\\
  &=\RE \big(\p \Phi_{\rho,N}D^\alpha u,~
      a^{-1}\Lambda^{2n+2}\Phi_{\rho,N}D^\alpha u\big)
      -\RE\big(\widetilde X_0\Phi_{\rho,N}D^\alpha u,~
      a^{-1}\Lambda^{2n+2}\Phi_{\rho,N}D^\alpha
 u\big)\\
  &={\rm Re}\big(\p\Phi_{\rho,N}D^\alpha u,
    ~a^{-1}\Lambda^{2n+2}\Phi_{\rho,N}D^\alpha u\big)
    -{\frac 1 2}\big(\Phi_{\rho,N}D^\alpha
    u,~[\Lambda^{2n+2},~a^{-1}]\widetilde X_0
    \Phi_{\rho,N}D^\alpha u\big)\\
    &\indent-{\frac 1 2}\big(\Phi_{\rho,N}D^\alpha
 u,~[a^{-1}\Lambda^{2n+2},~\widetilde X_0]
    \Phi_{\rho,N}D^\alpha u\big)\\
  &\leq \widetilde C_{25}\big\{~ \|\p\Phi_{\rho,N}D^\alpha
 u\|_{-\frac{\delta}{2}+n+1}^2
    +\|\Phi_{\rho,N}D^\alpha u\|_{\frac{\delta}{2}+n+1}^2~\big\}.
\end{align*}
This along with  \reff {step3} and \reff{N} shows at once
 \[ \norm{{\widetilde\Lambda}^{\sigma}\Phi_{\rho,N}D^\alpha u}_{n+1}
    \leq\frac{\widetilde C_{26}L^{|\alpha|-2}}{\rho^{(s+n)(|\alpha|-3)}}
    \big((|\alpha|-3)!\big)^{s}(N/\rho)^{\frac{s\delta}{2}}.\]
and hence \reff{finall} follows if we choose $\widetilde
C_{23}=\widetilde C_{24}+\widetilde C_{26}.$ Now by \reff{N} and
\reff{finall}, we obtain the desired inequality \reff{r-theta} if we
choose $\widetilde C_0=\widetilde C_{22}+\widetilde C_{23}$. This
completes the proof of Step 4.

\end{proof}

In  quite  the similar way as that in the proof of Lemma \ref{r1/3},
we can prove by induction the following

\begin{lem}\label{r1}
Assume that $(E)_{r, N-1}$ is true for any $r\in[0, 1]$, then there
exists a constant $C_3,$ depending only on $\sigma, s, n, \norm
u_{H^{n+6}(W)}$ and $\norm{a}_{C^{2n+2}(\Omega)},$  such that for
any $r\in[{\delta\over2},\, \delta],$ if $ L\geq
\max\Big\{2^{s+1}B_a,\, B_f,
\,4^{s+1}\norm{u}_{H^{n+6}(W)}+4^{s+1}\Big\},$ we have, for all
$\alpha, \:\abs\alpha=N,$
\[
  \|\Phi_{\rho,N}D^\alpha
  u\|_{r+n+1}+\|\Phi_{\rho,N}{\widetilde\Lambda}^{\sigma} D^\alpha
  u\|_{r-{\delta\over2}+n+1}\leq
  \frac{C_{3}L^{|\alpha|-2}}{\rho^{(s+n)(|\alpha|-3)}}
  \big((|\alpha|-3)!\big)^{s}(N/\rho)^{sr}.
\]
Inductively, For any $m\in\Natural$ such that
${{m\delta}\over2}<1+{\delta\over2},$ the above inequality still
holds for any $r$ with $\frac{(m-1)\delta}{2}\leq r\leq
\frac{m\delta}{2},$ and hence for all $r$ with $0\leq r\leq 1.$

\end{lem}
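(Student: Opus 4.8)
The plan is to run, essentially verbatim, the four-step induction from the proof of Lemma~\ref{r1/3}, but with every Sobolev index raised by $\delta/2$ and with the estimates $(E)_{r,N}$ for $0\le r\le\delta/2$ --- now available from Lemma~\ref{r0} and Lemma~\ref{r1/3} --- playing the role that the base estimate \reff{rho0} played there. As in that proof, by the interpolation inequality \reff{interpolation} it suffices to establish the endpoint bound
\[
 \|\Phi_{\rho,N}D^\alpha u\|_{\delta+n+1}+\|\Phi_{\rho,N}\widetilde\Lambda^{\sigma}D^\alpha u\|_{\frac{\delta}{2}+n+1}
 \le\frac{\widetilde C_0L^{|\alpha|-2}}{\rho^{(s+n)(|\alpha|-3)}}\big((|\alpha|-3)!\big)^{s}(N/\rho)^{s\delta}
\]
for $|\alpha|=N$, the intermediate range $\delta/2\le r\le\delta$ then following by interpolating this against \reff{r} at $r=\delta/2$. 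All Sobolev orders that occur stay within the range $[1,n+3]$ demanded by Lemma~\ref{+lem9}, since $\delta/2<1/2$.

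For the first term I would apply the subelliptic estimate \reff{sub0} at order $r=n+1$, giving $\|\Phi_{\rho,N}D^\alpha u\|_{\delta+n+1}\le C(\Omega)\{\|\mc{P}\Phi_{\rho,N}D^\alpha u\|_{\frac{\delta}{2}+n+1}+\|\Phi_{\rho,N}D^\alpha u\|_0\}$, the last term being absorbed by Lemma~\ref{tech} with $k=N$. Writing $\mc{P}\Phi_{\rho,N}D^\alpha u=[\mc{P},\Phi_{\rho,N}D^\alpha]u+\Phi_{\rho,N}D^\alpha f$ and controlling $\|\Phi_{\rho,N}D^\alpha f\|_{\frac{\delta}{2}+n+1}$ by the Gevrey hypothesis on $f$ (with $L\ge B_f$), the main work is the commutator, split as in Step~2 of Lemma~\ref{r1/3}, using $\mc{P}=X_0+a\widetilde\Lambda^{2\sigma}$, into $(I)=[X_0,\Phi_{\rho,N}D^\alpha]u$, $(II)=a[\widetilde\Lambda^{2\sigma},\Phi_{\rho,N}D^\alpha]u$ and $(III)=\Phi_{\rho,N}[a,D^\alpha]\widetilde\Lambda^{2\sigma}u$. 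Term $(I)$ is handled through \reff{cutoffnorm}, \reff{rho0}, \reff{r} and Lemma~\ref{tech}; for $(II)$ one uses the factorization $a[\widetilde\Lambda^{2\sigma},\varphi_{\rho,N}]=2a[\widetilde\Lambda^{\sigma},\varphi_{\rho,N}]\widetilde\Lambda^{\sigma}+a[\widetilde\Lambda^{\sigma},[\widetilde\Lambda^{\sigma},\varphi_{\rho,N}]]$, the commutator bounds \reff{+com1}--\reff{+com2} (now with $\kappa=\frac{\delta}{2}+n+1$), and then \reff{rho0}, \reff{r}, the induction hypothesis $(E)_{r,N-1}$ and Lemma~\ref{tech}; for $(III)$ one expands by Leibniz, invokes the Gevrey bound $\|D^\gamma a\|_{C^{n+2}(\bar\Omega)}\le B_a^{|\gamma|-1}((|\gamma|-2)!)^{s}$, applies $(E)_{r,N-1}$ to the factors with $\gamma\neq0$ (legitimate since $|\alpha-\gamma|\le N-1$), and closes by the combinatorial estimate that $\sum_{2\le|\gamma|\le|\alpha|-2}(2^sB_a/L)^{|\gamma|-1}|\alpha|^{3}/(|\alpha|-3)^{s-1}$ is bounded once $L>2^{s+1}B_a$ and $s-1\ge3$.

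For the second term I would bound $\|\Phi_{\rho,N}\widetilde\Lambda^{\sigma}D^\alpha u\|_{\frac{\delta}{2}+n+1}$ by $\|[\widetilde\Lambda^{\sigma},\Phi_{\rho,N}]D^\alpha u\|_{\frac{\delta}{2}+n+1}$ (controlled via Lemma~\ref{+lem9} exactly as in $(II)$) plus $\|\widetilde\Lambda^{\sigma}\Phi_{\rho,N}D^\alpha u\|_{\frac{\delta}{2}+n+1}$, and run the energy identity
\[
 \|\widetilde\Lambda^{\sigma}\Phi_{\rho,N}D^\alpha u\|_{\frac{\delta}{2}+n+1}^2
 =\RE\big(\mc{P}\Phi_{\rho,N}D^\alpha u,\,a^{-1}\Lambda^{\delta+2n+2}\Phi_{\rho,N}D^\alpha u\big)
 -\RE\big(\widetilde{X}_0\Phi_{\rho,N}D^\alpha u,\,a^{-1}\Lambda^{\delta+2n+2}\Phi_{\rho,N}D^\alpha u\big),
\]
integrating the $\widetilde{X}_0$-term by parts and using that $[\Lambda^{\delta+2n+2},a^{-1}]$ and $[a^{-1}\Lambda^{\delta+2n+2},\widetilde{X}_0]$ are of lower order, which reduces everything to $\|\mc{P}\Phi_{\rho,N}D^\alpha u\|_{\frac{\delta}{2}+n+1}$ and $\|\Phi_{\rho,N}D^\alpha u\|_{\delta+n+1}$, both already in hand. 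For the inductive tail of the statement, once $(E)_{r,N}$ is known for $0\le r\le\frac{m\delta}{2}$ --- true for $m=1$ by Lemma~\ref{r1/3} and for $m=2$ by the step just described --- one repeats the whole argument with all Sobolev indices raised by a further $\delta/2$ and with $(E)_{r,N}$, $0\le r\le\frac{m\delta}{2}$, as the base input, obtaining $(E)_{r,N}$ for $0\le r\le\frac{(m+1)\delta}{2}$; since $\delta>0$ this covers $0\le r\le1$ after finitely many $m$, which, together with the induction on $N$, completes Proposition~\ref{prp4}. Throughout, the standing hypothesis $s\ge2/\delta$, i.e.\ $\frac{s\delta}{2}\ge1\ge\sigma$, is what lets one dominate the stray factors $(N/\rho)^{\sigma}$, $(N/\rho)^{2\sigma}$, $(N/\rho)^{s(\sigma-\delta/2)}$ arising in Steps~1--2 by the target power $(N/\rho)^{s\delta/2}$, while the scaling relation \reff{fre} absorbs the loss in passing from $\rho$ to $\tilde\rho$ and $\tilde{\tilde\rho}$.

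I expect the principal obstacle to be the same as the one dominating the proof of Lemma~\ref{r1/3}, namely the coefficient term $(III)$: there the Gevrey bound on $a$, the induction hypothesis $(E)_{r,N-1}$, the crude estimate $(N/\rho)^{s(\sigma-\delta/2)}\le(N/\rho)^{s}\le16^{s}(2^s)^{|\gamma|-1}(N-|\gamma|-2)^{s}\rho^{-s}$, and a careful split of the binomial sum must be combined so that the resulting constant is uniform in $N$ and $\rho$ and is absorbed into $L$ under the single condition $L\ge\max\{2^{s+1}B_a,\,B_f,\,4^{s+1}\|u\|_{H^{n+6}(W)}+4^{s+1}\}$. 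Verifying that all the $((|\alpha|-3)!)^{s}$-factors and $(N/\rho)$-powers line up after the $\rho\to\tilde\rho\to\tilde{\tilde\rho}$ replacements is the bookkeeping heart of the argument; everything else is a mechanical re-reading of Lemma~\ref{r1/3} with indices shifted by $\delta/2$.
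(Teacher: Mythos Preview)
Your proposal is correct and is precisely the approach the paper intends: the paper gives no detailed proof of Lemma~\ref{r1} but only the remark that it is obtained ``in quite the similar way as in the proof of Lemma~\ref{r1/3}'' by induction, which is exactly the shift-by-$\delta/2$ scheme you describe, using Lemma~\ref{r1/3} (and Lemma~\ref{r0}) as the new base input in place of \reff{rho0}. One small index slip: applying \reff{sub0} with $r=n+1$ yields $\|\Phi_{\rho,N}D^\alpha u\|_{\delta+n+1}\le C\{\|\mc{P}\Phi_{\rho,N}D^\alpha u\|_{n+1}+\|\Phi_{\rho,N}D^\alpha u\|_0\}$, so the commutator $[\mc{P},\Phi_{\rho,N}D^\alpha]u$ need only be controlled at order $n+1$ (the exact $\delta/2$-shift of the index $-\delta/2+n+1$ used in Lemma~\ref{r1/3}), not at $\frac{\delta}{2}+n+1$ as you wrote; this only makes your task easier and the rest of the sketch goes through as stated.
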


\bigbreak Recall that the constants $C_1, C_2, C_3$   in Lemma
\ref{r0}, Lemma \ref{r1/3} and Lemma \ref{r1} depend only on $s,
\sigma, n, \norm{u}_{H^{n+6}(W)}$ and $\norm{a}_{C^{2n+2}(\Omega)}$.
Now take $L$ in such a way that $L>\max\Big\{C_1,\,C_2,\,C_3,\,
2^{s+1}B_a, \,B_f,\, 4^{s+1}(\norm{u}_{H^{n+6}(W)}+1)\Big\}$. Then
by the above three Lemmas, we get the truth of $(E)_{r,N}$ for any
$r\in[0,~1].$ This complete the proof of Proposition \ref{prp4}.

%%%%%%%%%%%%%%%%%%%%%%%%%%%%%%%%%%%%%%%%%%%%%%%%%%%%%%%%%%%%%%%%%%%%%
%%%%%%%%%%%%%%%%%%%%%%%%%%%%%%%%%%%%%%%%%%%%%%%%%%%%%%%%%%%%%%%%%%%%%

\section{Gevrey regularity of nonlinear equation}
\label{sect4} \setcounter{equation}{0}

In this section, $\mc C_j, j\geq 4,$ will be used to denote suitable
constants depending only on $\sigma,$ the Gevrey index $s$, the
dimension $n$ and the Gevrey constants of the functions $a, F$. The
existence and the Sobolev regularity of weak solutions for
non-linear Cauchy problems was proved in \cite{MX}.  Now let $u\in
L^{\infty}_{loc}(\bb R^{2n+1})$ be a weak solution of (\ref{++1.2}).
We first prove $u\in C^\infty(\bb R^{2n+1}),$ and we need the
following stability results by nonlinear composition (see for
example \cite{Xu}).

\begin{lem}\label{composition}
Let $F(t,x,v,q)\in C^\infty(\bb R^{2n+1}\times\bb R)$ and $r\geq 0$.
If $u\in L^{\infty}_{loc}(\bb R^{2n+1})\cap H_{loc}^{r}(\bb
R^{2n+1})$, then $F(\cdot,u(\cdot))\in H_{loc}^{r}(\bb R^{2n+1}).$
\end{lem}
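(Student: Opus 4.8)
The plan is to reduce the assertion to a global statement and then establish it via the classical Moser-type (tame) estimates for Nemytskii operators; since a detailed proof is available in \cite{Xu}, I would only outline the argument. As the claim is local, I would fix $\phi\in C_0^\infty(\mathbb{R}^{2n+1})$ and set $w=\phi u$, so that $w\in L^\infty(\mathbb{R}^{2n+1})\cap H^r(\mathbb{R}^{2n+1})$ is compactly supported, say $\operatorname{supp}w\subset B$, with $\|w\|_{L^\infty}\le M$. Then I would pick $\phi_1\in C_0^\infty(\mathbb{R}^{2n+1})$ equal to $1$ on a neighbourhood of $\bar B$ and a truncation $\theta\in C_0^\infty(\mathbb{R})$ with $\theta(q)=q$ for $\abs q\le M$, and put $\tilde F(t,x,v,q)=\phi_1(t,x,v)F(t,x,v,\theta(q))$. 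Then $\tilde F\in C_0^\infty(\mathbb{R}^{2n+1}\times\mathbb{R})$, all its derivatives are bounded, and $\tilde F(\cdot,w(\cdot))=\phi_1(\cdot)F(\cdot,w(\cdot))$ coincides with $F(\cdot,u(\cdot))$ near $\operatorname{supp}\phi$; so it would suffice to prove $\tilde F(\cdot,w(\cdot))\in H^r(\mathbb{R}^{2n+1})$ for $w\in L^\infty\cap H^r$ with compact support and $\tilde F$ together with all its derivatives bounded.

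For $r=m\in\mathbb{N}$ I would use the Fa\`a di Bruno / Leibniz formula: for $\abs\alpha\le m$, $\partial^\alpha\big(\tilde F(x,w(x))\big)$ is a finite sum of terms $\big(\partial_x^{\beta}\partial_q^{k}\tilde F\big)(x,w(x))\,\partial^{\gamma_1}w(x)\cdots\partial^{\gamma_k}w(x)$ with $\beta+\gamma_1+\cdots+\gamma_k=\alpha$, $1\le\abs{\gamma_i}$ (together with the term $(\partial_x^\alpha\tilde F)(x,w(x))$ when $k=0$). Since the coefficients are bounded, each term is bounded in $L^2$ by $\prod_{i=1}^k\|\partial^{\gamma_i}w\|_{L^{p_i}}$ with $\sum_i p_i^{-1}\le 1/2$ (so the product, being compactly supported, lies in $L^2$), and then the Gagliardo--Nirenberg inequalities $\|\partial^{\gamma_i}w\|_{L^{p_i}}\le C\|w\|_{L^\infty}^{1-\abs{\gamma_i}/m}\|w\|_{H^m}^{\abs{\gamma_i}/m}$ (valid since $\abs{\gamma_i}\le m$ and $\sum_i\abs{\gamma_i}\le m$) would give $\|\partial^\alpha(\tilde F(\cdot,w))\|_{L^2}\le C(M)(1+\|w\|_{H^m})$; summing over $\abs\alpha\le m$ proves $\tilde F(\cdot,w)\in H^m$.

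For non-integer $r=m+\theta$ ($m\in\mathbb{N}$, $0<\theta<1$), in the case $m=0$ I would bound the Gagliardo seminorm directly: the Lipschitz estimate $\abs{\tilde F(x,w(x))-\tilde F(y,w(y))}\le\|\nabla\tilde F\|_{L^\infty}(\abs{x-y}+\abs{w(x)-w(y)})$ together with the compact support of $\tilde F(\cdot,w)$ gives $[\tilde F(\cdot,w)]_{H^\theta}\le C(1+[w]_{H^\theta})$, using $\iint_{\abs{x-y}\le1}\abs{x-y}^{2-(2n+1)-2\theta}\,dx\,dy<\infty$ for $\theta<1$. For $m\ge1$ I would combine the integer case with this fractional one: $\partial^\alpha(\tilde F(\cdot,w))$ with $\abs\alpha=m$ is the sum of products above, whose $H^\theta$ norm is controlled by the fractional Leibniz (Kato--Ponce) rule, reducing matters to $H^\theta$- and $L^{p_i}$-norms of the factors $\partial^{\gamma_i}w$ and to the $H^\theta$-norm of $(\partial_x^\beta\partial_q^k\tilde F)(\cdot,w)$ (the last handled by the $m=0$ case, since $w\in H^r\hookrightarrow H^\theta$ locally), the exponents being balanced by Gagliardo--Nirenberg so that exactly $m+\theta$ derivatives of $w$ are spent; this yields $\tilde F(\cdot,w)\in H^r$.

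The routine part is the integer case. The hard part will be the non-integer case: tracking fractional derivatives through both the composition (fractional chain rule) and the products (fractional Leibniz) while keeping the Gagliardo--Nirenberg book-keeping consistent. An alternative route avoiding these explicit computations is to write $\tilde F(\cdot,w)$ via the Bony paraproduct decomposition and invoke the continuity of paraproducts and of the paralinearization remainder on Sobolev spaces; I would refer to \cite{Xu} and the references therein for the complete details.
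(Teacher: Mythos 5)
Your proposal is correct and follows essentially the same route the paper intends: the paper does not prove Lemma~\ref{composition} itself but defers to the reference \cite{Xu} (recording only the Moser-type product estimate \eqref{4.1}, which is then used later in the proof of Lemma~\ref{+Friedman}), and the content of that reference is precisely the localization plus tame/Moser composition estimates you outline (Fa\`a di Bruno and Gagliardo--Nirenberg for integer $r$, Gagliardo seminorms or fractional Leibniz / paraproducts for non-integer $r$). Your sketch is sound; the one place that deserves care when writing it out in full is, as you note, the mixed case $r=m+\theta$ with $m\ge1$, where the fractional Leibniz rule must be combined with Gagliardo--Nirenberg so that the total number of derivatives spent on the $\partial^{\gamma_i}w$ factors does not exceed $m+\theta$ and the H\"older exponents still sum to $\le 1/2$.
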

In fact, if $u_1, u_2\in H^{r}(\bb R^{2n+1}) \cap L^{\infty}(\bb
R^{2n+1})$, then
\[
\|u_1 u_2\|_{r}\leq C_n\{ \|u_1\|_{L^{\infty}}\|u_2\|_{r} +
\|u_2\|_{L^{\infty}}\|u_1\|_{r}\}.
\]
Thus if $r>(2n+1)/2$, the Sobolev embedding theorem implies that
\begin{equation}\label{4.1}
\|u_1 u_2\|_{r}\leq C \|u_1\|_{r}\|u_2\|_{r}.
\end{equation}

Suppose that  $u\in L^{\infty}_{loc}(\bb R^{2n+1})$ is a weak
solution of (\ref{++1.2}). Then by the subelliptic estimate
(\ref{sub0}), one has
\begin{eqnarray}\label{+estimate}
\|\psi_1u\|_{r+\delta}\leq C \{~\|\psi_2
F(\cdot,u(\cdot))\|_r+\|\psi_2 u\|_r~\},
\end{eqnarray}
where $\psi_1, \psi_2 \in C_0^{\infty}(\bb R^{2n+1})$ and $\psi_2=1
$ in the support of $\psi_1$. Combining Lemma \ref{composition} and
the above subelliptic estimate (\ref{+estimate}), we have $u\in
H_{loc}^{\infty}(\bb R^{2n+1})$ by standard iteration. We state this
result in the following Proposition:

\smallskip

\begin{prp}\label{+smooth}

Let $u\in L^{\infty}_{loc}(\bb R^{2n+1})$ be a weak solution of
(\ref{++1.2}). Then $u\in C^\infty(\bb R^{2n+1})$.

\end{prp}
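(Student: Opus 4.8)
The argument is a routine Sobolev bootstrap, and most of the ingredients are already in place: the linear subelliptic estimate \reff{sub0} of Proposition~\ref{prp2} supplies a fixed gain $\delta>0$, and Lemma~\ref{composition} controls the nonlinear term. Since smoothness is a local property, it suffices to show that for every relatively compact open set and every $m\in\NN$ one has $u\in H^m$ on a slightly smaller open set; the Sobolev embedding theorem then yields $u\in C^\infty(\RR^{2n+1})$.

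The first thing I would do is upgrade \reff{sub0} from an a priori estimate for test functions to a genuine regularity statement: if $w\in\D'(\RR^{2n+1})$ satisfies $\mc Pw=g$ with $g\in H^r_{loc}(\RR^{2n+1})$, then $w\in H^{r+\delta}_{loc}(\RR^{2n+1})$. This is exactly the step invoked just after Theorem~\ref{th2.1}: one localises with a cutoff $\psi$, inserts Friedrichs mollifiers $J_\eps$ so that $J_\eps(\psi w)\in C_0^\infty$, applies \reff{sub0} to it, and checks that the commutator error terms $\psi[\mc P,J_\eps]w$ and $[\mc P,\psi]J_\eps w$ stay bounded in $H^r$ as $\eps\to0$. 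These commutator bounds are the same as in Section~4 of \cite{MX}; the only structural inputs are that $a\in C^\infty$ with bounded derivatives (as normalised in Section~\ref{sect1}) and the elementary mapping properties of $\widetilde\Lambda^\sigma_v$, so nothing new is needed for the semilinear case — the nonlinearity has been pushed entirely onto the right-hand side.

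The iteration is then immediate. Put $r_0=0$: since $u\in L^\infty_{loc}\subset H^0_{loc}$, Lemma~\ref{composition} gives $F(\cdot,u(\cdot))\in H^0_{loc}$, and the regularity estimate above applied to $\mc Pu=F(\cdot,u)$ yields $u\in H^\delta_{loc}$. Inductively, if $u\in H^{k\delta}_{loc}$, then $u\in L^\infty_{loc}\cap H^{k\delta}_{loc}$, so Lemma~\ref{composition} gives $F(\cdot,u)\in H^{k\delta}_{loc}$ (for $k\delta\le(2n+1)/2$ via the product inequality with an $L^\infty$ factor, for $k\delta>(2n+1)/2$ via the algebra inequality \reff{4.1}; in either case membership of $u$ in $L^\infty_{loc}$ is precisely what is required), whence $\mc Pu\in H^{k\delta}_{loc}$ and therefore $u\in H^{(k+1)\delta}_{loc}$. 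Since $\delta>0$, finitely many steps reach any prescribed $H^m_{loc}$, and the proposition follows.

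The only genuinely delicate point is the first step — converting the test-function estimate \reff{sub0} into the distributional regularity gain while keeping the mollifier commutators under control at the current level of regularity of $u$; everything downstream is bookkeeping. As this mollifier computation is carried out verbatim as in \cite{MX}, I would cite that reference rather than reproduce it.
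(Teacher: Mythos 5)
Your argument is correct and follows essentially the same route as the paper: localize the subelliptic estimate \reff{sub0}, use Lemma~\ref{composition} to carry $H^r_{loc}$ regularity through the nonlinearity, and bootstrap by increments of $\delta$. The only difference is that you make explicit the mollifier/commutator step (citing \cite{MX}) needed to turn the a priori estimate into a genuine regularity gain for a distributional solution, whereas the paper passes over this point silently in writing \reff{+estimate}; that is a faithful rendering of what the paper relies on, not a different approach.
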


\smallskip

In this section we keep the same notations that we have set up in
the previous sections. We prove the Gevrey regularity of the smooth
solution $u$ of Equation (\ref{++1.2}) on $\Omega$. Set
$W=2\Omega=\big\{(t,x);\:\inner{t^2+\abs
x^2}^{1/2}<2\big\}\times\set{v\in\Real^n,\:\abs v<2}$ and
$$
M=\max_{(t, x, v)\in{\bar {W}}}|u(t, x, v)|.
$$
Let $\{M_j\}$ be a sequence of positive coefficients. We say that it
satisfies the monotonicity condition if there exists $B_0>0$ such
that for any $j\in\bb N$,
\begin{equation}\label{monotonicity}
\frac{j!}{i!(j-i)!}M_iM_{j-i}\leq B_0M_j, ~~(i=1,2,\cdots,j).
\end{equation}
Let $\norm{u}_{C^k(\Omega)}$ be the classic H\"{o}rder norm, that
is, $\norm{u}_{C^k(\Omega)}=\sum_{j=0}^{k}\norm{D^j
u}_{L^\infty(\Omega)}.$

We study now the stability of the Gevrey regularity by the non
linear composition, which is an analogue of Lemma 1 in Friedman's
work \cite{Friedman}.

\begin{lem}\label{+Friedman}

Let $N>n+2$ and $0<\rho<1$ be given.  Let $\{M_j\}$ be a positive
sequence satisfying the monotonicity condition \reff{monotonicity}
and that for some constant $\mc C_n$ depending only on $n,$
\begin{equation}\label{++}
  \nrho^{n+2}M_{N-n-2}\leq \mc C_n M_{N-2};\qquad  M_{j}\geq \rho^{-j},\quad j\geq 2.
\end{equation}
Suppose that there exists $\mc C_4>1,$ depending only on the Gevrey
constant of $F,$ such that:

1) the function $F(t, x, v; q)$ satisfies the following condition:
for any $k, l\geq 2$,
\begin{eqnarray}\label{condition1}
\big\|D_{t,x,v}^{\gamma}D_q^lF\big\|_{C^{n+2}(\bar{\Omega}\times
[-M, M])}\leq \mc C_4^{k+l} M_{k-2}M_{l-2},\quad \forall
~\abs\gamma=k,
\end{eqnarray}

2) the smooth function $g(t, x, v)$ satisfies the following
conditions: $\norm{g}_{L^\infty(\bar W)}\leq M$ and
\begin{eqnarray}\label{condition2}
\|D^j g\|_{C^{n+3}\inner{\bar W}}\leq H_0, \quad 0\leq j\leq 1,
\end{eqnarray}
and for any $0<\rho<1$ and any $j, 2\leq j\leq N,$ one has
\begin{eqnarray}\label{condition3}
\|\Phi_{\rho,j}D^\gamma g\|_{\nu+n+1}\leq H_1^{j-2}M_{j-2}, \quad
\forall ~ \abs \gamma=j,
\end{eqnarray}
where  $\nu$ is a real number satisfying $-1/2<\nu\leq 1,$ and
$H_0,H_1\geq1, \:H_1\geq \inner{4^{n+2}\mc C_4H_0}^2.$

Then there exists $\mc C_5>1,$ depending only the Gevrey constant of
$F$ and the dimension $n,$ such that for all $\rho, 0<\rho<1,$ and
all $\alpha\in\bb N^{2n+1}$ with $|\alpha|=N$,
\begin{eqnarray}\label{conclusion}
\big\|\Phi_{\rho,N}D^{\alpha}\big[F\big(\cdot,
g(\cdot)\big)\big]\big\|_{\nu+n+1}\leq \mc C_5
H_0^2H_1^{N-2}M_{N-2}.
\end{eqnarray}

\end{lem}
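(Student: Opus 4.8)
The plan is to estimate $\Phi_{\rho,N}D^\alpha[F(\cdot,g(\cdot))]$ by expanding $D^\alpha$ via the Faà di Bruno / Leibniz formula into a sum over ways of distributing the $N$ derivatives among one pure derivative $D_{t,x,v}^{\gamma_0}$ hitting $F$ directly and several factors $D^{\gamma_i}g$ arising from the chain rule, weighted by $D_q^l F$ where $l$ is the number of $g$-factors. Thus
\[
D^\alpha\bigl[F(\cdot,g(\cdot))\bigr]=\sum_{l\geq 0}\ \sum_{\gamma_0+\gamma_1+\cdots+\gamma_l=\alpha,\ \gamma_i\neq 0\ (i\geq 1)}
c_{\gamma_0,\dots,\gamma_l}\,\bigl(D_{t,x,v}^{\gamma_0}D_q^l F\bigr)(\cdot,g(\cdot))\prod_{i=1}^l D^{\gamma_i}g,
\]
with combinatorial coefficients $c$ controlled by multinomial factors. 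Multiplying by $\Phi_{\rho,N}$ and taking the $H^{\nu+n+1}$ norm, I would use that $\nu+n+1>(2n+1)/2$ so that $H^{\nu+n+1}$ is a Banach algebra (inequality \reff{4.1}), which lets the norm of a product be bounded, up to $C_n$, by the product of the norms — after first inserting intermediate cutoffs $\Phi_{\tilde\rho,\cdot}$ as in Step 1 of Lemma \ref{r1/3} so that each factor $D^{\gamma_i}g$ can legitimately be localized. The factor $(D^{\gamma_0}D_q^lF)(\cdot,g(\cdot))$ is itself estimated by the composition lemma (Lemma \ref{composition}) together with the chain-rule bound and hypothesis \reff{condition1}, picking up $M_{|\gamma_0|-2}\prod$(something in $M_{l-2}$) and a power of $\mc C_4$; the $g$-factors of low order ($|\gamma_i|\le 1$) are absorbed into $H_0$ via \reff{condition2}, and those of order $\ge 2$ into $H_1^{|\gamma_i|-2}M_{|\gamma_i|-2}$ via \reff{condition3}.

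The heart of the argument is then the combinatorial bookkeeping: after inserting all these bounds one is left with a sum, over all ordered partitions of $\alpha$, of products of the form (power of $\mc C_4$)$\cdot H_0^{\#\{\text{low-order factors}\}}\cdot H_1^{\sum(|\gamma_i|-2)}\cdot M_{|\gamma_0|-2}\prod_i M_{|\gamma_i|-2}$. The monotonicity condition \reff{monotonicity} is exactly what collapses the product $M_{|\gamma_0|-2}\prod_i M_{|\gamma_i|-2}$ into a single $M_{N-2}$ at the cost of a geometric factor $B_0^{l}$, and the second part of \reff{++}, $M_j\ge\rho^{-j}$, together with $\nrho^{n+2}M_{N-n-2}\le \mc C_n M_{N-2}$, handles the exponents of $N/\rho$ and the borderline cases where $|\gamma_0|$ or some $|\gamma_i|$ is $0$ or $1$ (so that $M_{|\gamma|-2}$ is not literally defined and must be replaced by $H_0$ or by a power of $\rho^{-1}$). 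The number of ordered partitions of a multi-index of length $N$ into $l+1$ parts is bounded by $2^{N}l!\big/\text{something}$, which is defeated by choosing the final constant $\mc C_5$ large compared with $2^{2n+1}B_0\mc C_4$ and using the hypothesis $H_1\ge(4^{n+2}\mc C_4 H_0)^2$ to kill the surplus powers of $H_0$ against powers of $H_1$; this is where the precise shape of the hypotheses on $H_0,H_1$ is used.

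Concretely the steps are: (i) write out the generalized Leibniz/Faà di Bruno expansion and record the combinatorial coefficients; (ii) insert intermediate cutoffs and invoke the algebra property of $H^{\nu+n+1}$ to reduce to a product of factor-norms; (iii) bound the "outer" factor $(D^{\gamma_0}D_q^lF)(\cdot,g)$ using Lemma \ref{composition}, the chain rule, \reff{condition1} and $\|g\|_{L^\infty}\le M$; (iv) bound each $g$-factor by \reff{condition2}–\reff{condition3}; (v) apply the monotonicity \reff{monotonicity} and the growth conditions \reff{++} to merge all the $M$'s into $M_{N-2}$ and absorb the powers of $N/\rho$; (vi) sum the resulting geometric-type series, using the size assumptions on $H_0,H_1,\mc C_4$ to get a bound of the form $\mc C_5 H_0^2 H_1^{N-2}M_{N-2}$. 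I expect step (v)–(vi) — the uniform control of the combinatorial sum and the verification that the borderline terms with $|\gamma_i|\in\{0,1\}$ fit into the claimed bound — to be the main obstacle, since it requires juggling the monotonicity constant $B_0$, the partition count $2^N$, and the algebra constant $C_n$ simultaneously against the single free parameter $\mc C_5$; everything else is a routine, if lengthy, chain of Banach-algebra and cutoff estimates already rehearsed in Section \ref{sect3+}.
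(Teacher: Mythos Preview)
Your overall strategy is correct and matches the paper through step (iv): Fa\`a di Bruno expansion, the Banach-algebra property of $H^{\nu+n+1}$ (since $\nu+n+1>(2n+1)/2$), insertion of intermediate cutoffs so that each $D^{\gamma_i}g$ factor can be localized and bounded via \reff{condition2}--\reff{condition3}, and the $C^{n+2}$ bound \reff{condition1} on the $F$-factor. One small correction: Lemma~\ref{composition} is not the right tool for the factor $(D_{t,x,v}^{\gamma_0}D_q^lF)(\cdot,g(\cdot))$; that lemma only gives qualitative $H^r_{loc}$ regularity. What is actually needed (and what the paper does) is a direct estimate of $\|\Phi_{\tilde\rho,3}(D_{t,x,v}^{\gamma_0}D_q^lF)(\cdot,g)\|_{\nu+n+1}$ via the $H^{n+2}$ norm, the chain rule, and the $C^{n+2}$ bound \reff{condition1}, together with \reff{++} to absorb the $(3/\rho)^{n+2}$ coming from the cutoff.

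The genuine divergence is in your steps (v)--(vi). You propose to collapse the product $\prod M_{|\gamma_i|-2}$ into $M_{N-2}$ by iterating the monotonicity condition \reff{monotonicity} term by term, paying $B_0^l$, and then to sum the resulting series directly. The paper instead follows Friedman's majorant method: it introduces explicit polynomial majorants
\[
  w(y)=H_0\Big(y+\sum_{j=2}^N \frac{H_1^{j-2}M_{j-2}}{j!}y^j\Big),\quad
  X_1(w),\ X_2(y),\ X(y,w)=X_1(w)X_2(y),
\]
shows that $\|\Phi_{\tilde\rho,3}D^\alpha[F(\cdot,g)]\|_{\nu+n+1}\leq C_nH_0\,\frac{d^N}{dy^N}X(y,w(y))\big|_{y=0}$, and then bounds this single derivative by computing $w^i(y)\ll 35^{i-1}H_0^i(\cdots)$ inductively and reading off the $y^N$ coefficient. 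The majorant device packages all the Fa\`a di Bruno combinatorics (multinomial coefficients, ordered partitions, symmetry factors) into a single composition of formal series, so the only place \reff{monotonicity} is invoked is in the simple estimate of $w^2(y)$, and the hypothesis $H_1\geq(4^{n+2}\mc C_4H_0)^2$ enters cleanly when comparing the coefficients of $X_1^{(j)}(0)$.

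Your direct approach can in principle be made to work, but the bookkeeping you flag as ``the main obstacle'' is precisely what the majorant method is designed to avoid: matching the multinomial coefficients in Fa\`a di Bruno against iterated applications of \reff{monotonicity}, keeping track of how many factors have $|\gamma_i|\leq 1$, and summing over $l$ without losing a factor like $N!$ or $2^N$ that $\mc C_5$ alone cannot absorb. If you pursue the direct route you will essentially be re-deriving the majorant calculation by hand; it is cleaner to set up the generating functions from the start.
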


\begin{proof} In the proof, we use $C_n$ to denote  constants which depend only on
$n$ and may be different in different contexts. In the following,
for each $\rho,$ we always denote
\[
 \tilde\rho=\frac{(N-1)\rho}{N},
 \qquad \tilde{\tilde\rho}=\frac{(N-2)\rho}{N}.
\]
Observe that for $\rho,\, \tilde\rho, \,\tilde{\tilde\rho},$ we have
the relation \reff{fre}. Since $\Phi_{\tilde\rho, 3}=1$ in the
support of $\Phi_{\rho, N},$ then by Lemma \ref{3.1}, one has
\begin{align*}
  &\bignorm{\Phi_{\rho,N}D^\alpha[F(\cdot,g(\cdot))]}_{\nu+n+1}
  =\bignorm{\Phi_{\rho,N}\Phi_{\tilde\rho, 3}D^\alpha[F(\cdot,g(\cdot))]}_{\nu+n+1}\\
  &\leq C_n\Big\{\bignorm{\Phi_{\tilde\rho, 3}
  D^\alpha[F(\cdot,g(\cdot))]}_{\nu+n+1}+\nrho^{n+1+\nu}\bignorm{\Phi_{\tilde\rho, 3}
  D^\alpha[F(\cdot,g(\cdot))]}_{0}\Big\} \\
  &=:\mc I_1+\mc I_2.
\end{align*}
The proof will be completed if we can show that there exists a
constant $\mc E$ depending only the Gevrey constant of $F$ and the
dimension $n,$ such that
\begin{equation}\label{mc1}
  \mc I_1\leq
  \mc EH_0^2H_1^{N-2}M_{N-2}.
\end{equation}

\vspace{0.5ex} Indeed, choose a multi-index $\tilde\alpha\leq\alpha$
such that $\abs{\tilde\alpha}=n.$ Then
\begin{align*}
  \mc I_2&=C_n
  \nrho^{n+1+\nu}\bignorm{\Phi_{\tilde\rho, 3}D^{\tilde\alpha}\Phi_{\tilde{\tilde\rho}, 3}
  D^{\alpha-\tilde\alpha}[F(\cdot,g(\cdot))]}_{0}\\
  & \leq C_n
  \nrho^{n+1+\nu}\bignorm{\Phi_{\tilde{\tilde\rho}, 3}
  D^{\alpha-\tilde\alpha}[F(\cdot,g(\cdot))]}_{n}\\
  & \leq C_n
  \nrho^{n+2}\bignorm{\Phi_{\tilde{\tilde\rho}, 3}
  D^{\alpha-\tilde\alpha}[F(\cdot,g(\cdot))]}_{\nu+n+1}.
\end{align*}
Assuming that \reff{mc1} holds, then by virtue of the condition
\reff{++}, we have
\begin{align*}
  \mc I_2\leq C_n
  \nrho^{n+2}\bignorm{\Phi_{\tilde{\tilde\rho}, 3}
  D^{\alpha-\tilde\alpha}[F(\cdot,g(\cdot))]}_{\nu+n+1}&\leq
  C_n\nrho^{n+2}\mc EH_0^2H^{N-n-2}M_{N-n-2}\\
  &\leq C_n\mc EH_0^2H^{N-2}M_{N-2}.
\end{align*}
With \reff{mc1}, the conclusion follows at once.

\vspace{0.5ex} The rest is devoted to the proof of \reff{mc1}. By
Faa di Bruno' formula,
$\Phi_{\tilde\rho,3}D^\alpha[F(\cdot,g(\cdot))]$ is the linear
combination of terms of the form
\begin{eqnarray}\label{++A}
\Phi_{\tilde\rho,3}\inner{D_{t,x,v}^\beta\partial_q^lF}(\cdot,g(\cdot))\cdot\prod_{j=1}^l
D^{\gamma_j}g,
\end{eqnarray}
where $|\beta|+l\leq |\alpha|$ and $
\gamma_1+\gamma_2+\cdots+\gamma_l=\alpha-\beta,$  and  if
$\gamma_i=0$,  $D^{\gamma_i}g$ doesn't appear in (\ref{++A}).

\vspace{0.5ex} Next we estimate the Sobolev norm of the form
\reff{++A}. Take a function $\Psi\in C_0^\infty(W)$ such that
$\Psi=1$ in $\Omega.$ Note that $n+1+\nu>(2n+1)/2.$ We apply
(\ref{4.1}) to compute
\begin{align*}
  &\|\Phi_{\tilde\rho,3}\inner{D_{t,x,v}^\beta\partial_q^lF}(\cdot,g(\cdot))\cdot\prod_{j=1}^l
D^{\gamma_j}g\|_{\nu+n+1} \\
&\leq\big\|\Phi_{\tilde\rho,3}
  \inner{D_{t,x,v}^\beta\partial_q^lF}(\cdot,g(\cdot))\big\|_{\nu+n+1}
  \cdot\prod_{j=1}^
  {l}
\big\| \Psi_j D^{\gamma_j}g\big\|_{\nu+n+1},
\end{align*}
where  $\Psi_j$ is given by setting $\Psi_j=\Psi$ if
$\abs{\gamma_j}=1,$ and $\Psi_j=\Phi_{\tilde{\tilde\rho},
\abs{\gamma_j}}$ if $\abs{\gamma_j}\geq2.$ Moreover a direct
computation yields
\begin{align*}
&\big\|\Phi_{\tilde\rho,3}
  \inner{D_{t,x,v}^\beta\partial_q^lF}(\cdot,g(\cdot))\big\|_{\nu+n+1}
  \leq\big\|\Phi_{\tilde\rho,3}\inner{D_{t,x,v}^\beta\partial_q^lF}(\cdot,g(\cdot))
 \big\|_{n+2}\\
  & \leq C_n H_0 \set{\sup\abs{D^{n+2}\Phi_{\tilde\rho,3}}
  \cdot\bignorm{D_{t,x,v}^{\beta}\partial_q^{l}F}_{C(\bar{\Omega}\times[-M,M])}
  +\bignorm{D_{t,x,v}^{\beta}\partial_q^{l}F}_{C^{n+2}(\bar{
  \Omega}\times[-M,M])}}\\
  &\leq C_n H_0 \set{\inner{3\over\rho}^{n+2}\bignorm{D_{t,x,v}^{\beta}\partial_q^{l}F}_{C(\bar{
\Omega}\times[-M,M])}
  +\bignorm{D_{t,x,v}^{\beta}\partial_q^{l}F}_{C^{n+2}(\bar{ \Omega}\times[-M,M])}}
\end{align*}
In the last inequality, we have used \reff{cutoff}. Without loss of
generality we may assume $\abs\beta\geq n+2.$ Then we may choose
$\tilde\beta\leq \beta$ such that
$\big|\tilde\beta\big|=\abs\beta-(n+2).$ Thus by \reff{++},
\reff{condition1} and the monotonicity condition
\reff{monotonicity}, one has
\[
  \bignorm{D_{t,x,v}^{\beta}\partial_q^{l}F}_{C^{n+2}(\bar{ \Omega}\times[-M,M])}
  \leq M_{|\beta|-2}M_{l-2},
\]
and
\begin{align*}
  \inner{3\over\rho}^{n+2}\bignorm{D_{t,x,v}^{\beta}\partial_q^{l}F}_{C(\bar{
  \Omega}\times[-M,M])}&\leq\inner{3\over\rho}^{n+2}\bignorm{D_{t,x,v}^{\tilde\beta}
  \partial_q^{l}F}_{C^{n+2}(\bar{\Omega}\times[-M,M])}\\
  &\leq3^{n+2}M_{n+2}M_{|\tilde\beta|-2}M_{l-2}\\
  &\leq3^{n+2}M_{|\beta|-2}M_{l-2}.
\end{align*}
Hence,
\begin{align*}
  \big\|\Phi_{\tilde\rho,3}
  \inner{D_{t,x,v}^\beta\partial_q^lF}(\cdot,g(\cdot))\big\|_{\nu+n+1}
  \leq C_n H_0 M_{|\beta|-2}M_{l-2}.
\end{align*}
Hence
\begin{equation}\label{formnorm}
  \|\Phi_{\tilde\rho,3}\inner{D_{t,x,v}^\beta\partial_q^lF}(\cdot,g(\cdot))\prod_{j=1}^l
D^{\gamma_j}g\|_{\nu+n+1} \leq  C_n H_0 M_{|\beta|-2}M_{l-2}
  \prod_{j=1}^
  {l}
\big\| \Psi_j D^{\gamma_j}g\big\|_{\nu+n+1},
\end{equation}

\vspace{0.5ex} By virtue of (\ref{condition2})-(\ref{condition3})
and (\ref{++A})-\reff{formnorm}, the situation is now similar to
\cite{Friedman}. In fact, we work with the Sobolev norm, and we
shall follow the idea of \cite{Friedman} to prove \reff{mc1}.  First
we define the polynomial functions \dol{w,\,X_1,\,X_2} in
\dol{\Real} as follows:
\[
  w=w(y)=H_0\inner{y+\sum_{j=2}^N \frac{H_1^{j-2}M_{j-2}y^j}{j!}}, \quad y\in \Real;
\]
\[
  X_1(w)=1+\mc C_4w+\sum_{j=2}^N
  \frac{\mc C_4^{j}M_{j-2}w^j}{j!};
\]
\[
  X_2(y)=1+\mc C_4y+\sum_{j=2}^N \frac{\mc C_4^{j}M_{j-2}y^j}{j!},\quad
  y\in\Real.
\]
By the conditions \reff{condition2} and \reff{condition3},  we have
\[
 \big\|\Psi_{j}D ^j
 g\big\|_{\nu+n+1}
 \leq \frac{d^{j}w(y)}{dy^{j}}\Big|_{y=0},\quad \forall~~1\leq j\leq
N;
\]
Define \dol{X(y,w)=X_1(w)X_2(y).} Then by virtue of
\reff{condition1}, it follows
\[
 M_{k-2}M_{l-2}\leq \frac{\partial^{k+l}X(y,w)}{\partial y^{k}\partial w^l}\Big|_{(y,w)=(0,0)}
 ,\quad
  \forall~~2\leq k,\:l\leq N.
\]
By \reff{formnorm} and the above two inequalities, we get that for
all \dol{\alpha, \:\abs\alpha=N,}
\begin{equation*}%\label{keypoint}
   \mc I_1=C_n\bignorm{\Phi_{\tilde\rho, 3}
  D^\alpha[F(\cdot,g(\cdot))]}_{\nu+n+1}\leq C_nH_0\frac{d^N}{dy^N}
  X\inner{y,w(y)}\Big|_{y=0}.
\end{equation*}
Hence, the proof of \reff{mc1} will be complete if we show that,
\begin{equation}\label{+++++a}
 \frac{d^N}{dy^N}
  \biginner{X_1\inner{w(y)}X_2(y)}\Big|_{y=0}\leq 72 \mc C_4H_0H_1^{N-2}M_{N-2}.
\end{equation}

To prove the above inequality, we need to treat
$X_j^{(k)}(0):=\frac{d^k}{dy^k}X_j(y)\Big|_{y=0},\,0\leq k\leq
N,\,j=1,2.$ We say \dol{w(y)\ll h(y)} if the following relation
holds:
\[
  w^{(j)}(0)\leq h^{(j)}(0),\quad 0\leq j\leq N,
\]
Obviously,
\[
 w(y)\ll w(y)=H_0\inner{y+\sum_{j=2}^N
 \frac{H_1^{j-2}M_{j-2}y^j}{j!}}.
\]
 We next prove
\begin{equation}\label{w2}
  w^2(y)\ll 35 H_0^2\inner{y^2+\sum_{j=3}^N
 \frac{H_1^{j-3}M_{j-3}y^j}{(j-1)!}}.
\end{equation}
In fact, direct verification shows that
\[
  w^2(y)=H_0^2\set{y^2+\sum_{j=3}^N\bigcom{\frac{2H_1^{j-3}M_{j-3}}{(j-1)!}+\sum_{i=2}^{j-2}
  \frac{H_1^{j-4}M_{i-2}M_{j-i-2}}{i!(j-i)!}}y^j}+O(y^{N+1}).
\]
Since \dol{\set{M_j}} satisfies the monotonicity condition
\reff{monotonicity},  we compute
\[
  \sum_{i=2}^{j-2}
  \frac{H_1^{j-4}M_{i-2}M_{j-i-2}}{i!(j-i)!}\leq
  \frac{4 H_1^{j-4}M_{j-4}}{(j-4)!j^2}\sum_{i=2}^{j-2}
  \frac{j^2}{i^2(j-i)^2}\leq\frac{32 H_1^{j-3}M_{j-3}}{(j-1)!}.
\]
Combing these, we obtain \reff{w2}.  Inductively, we have the
following relations:
\[
   w^i(y)\ll 35^{i-1} H_0^i\inner{y^{i}+\sum_{j=i+1}^N
 \frac{H_1^{j-i-1}M_{j-i-1}y^j}{(j-i+1)!}},\quad 2\leq i\leq N-1;
\]
\[
  w^N(y)\ll 35^N H_0^Ny^N.
\]
Thus by the definition of \dol{X_1}, it follows that
\begin{align*}
  &X_1(y)=X_1\inner{w(y)}\ll 1+\mc C_4H_0
  y+\inner{H_0M_0/2+35\mc C_4^2M_0H_0^2/2}y^2\\
  &+\sum_{j=3}^N\inner{\frac{H_0H_1^{j-2}M_{j-2}}{j!}+\frac{35^{j-1}\mc C_4^jH_0^jM_{j-2}}{j!}
  +\sum_{i=2}^{j-1}\frac{35^{i-1}\mc C_4^iH_0^iH_1^{j-i-1}M_{i-2}M_{j-i-1}}{i!(j-i+1)!}}y^j.
\end{align*}
This gives
\[
  X_1(0)=1,\quad X_1'(0)\leq \mc C_4H_0,\quad X_1^{(2)}(0)\leq
  H_0M_0+35\mc C_4^2M_0H_0^2,
\]
and moreover for \dol{j\geq3,}
\[
  X_1^{(j)}(0)\leq \mc C_4H_0H_1^{j-2}M_{j-2}+35^{j-1}\mc C_4^jH_0^jM_{j-2}
  +\sum_{i=2}^{j-1}\frac{j!35^{i-1}\mc C_4^iH_0^iH_1^{j-i-1}M_{i-2}M_{j-i-1}}{i!(j-i+1)!}.
\]
Observe that \dol{H_1\geq (35\mc C_4H_0)^2,} and hence
\dol{X_1^{(2)}\leq 2\mc C_4H_0H_1M_0,} and for \dol{j\geq3,}
\begin{align*}
  X_1^{(j)}(0)&\leq 2C_{4}H_0H_1^{j-2}M_{j-2}+
  \frac{4C_{4}(j-2)!H_0H_1^{j-2}M_{j-3}}{(j-3)!}\sum_{i=2}^{j-1}\frac{j^2}{i^2(j-i)^2}\\
&  \leq 6C_{4}H_0H_1^{j-2}M_{j-2} .
\end{align*}
On the other hand, it is clear that
\[
  X_2(0)=1,\quad X_2'(0)\leq \mc C_4,\quad X_2^{(j)}(0)\leq \mc C_4^jM_{j-2},\:\,2\leq j\leq N.
\]
By virtue of the above relations, we have, for \dol{H_1\geq
\inner{35\mc C_4H_0}^2,}
\begin{align*}
  &\frac{d^N}{dy^N}X\inner{y,
  w(y)}\Big|_{y=0}=\sum_{j=0}^N\frac{N!}{j!(N-j)!}X_1^{(j)}(0)X_2^{(N-j)}(0)\\
  &\leq \mc C_4^{N}M_{N-2}+\mc C_4^NN
  H_0M_{N-3}+2N(N-1)\mc C_4^{N-1}H_0H_1M_0M_{N-4}+6\mc C_4^2 H_0H_1^{N-3}M_{N-3}\\
  &\indent+6\mc C_4H_0H_1^{N-2}M_{N-2}+6C_{4}\sum_{j=3}^{N-2}
  \frac{N!}{j!(N-j)!}H_0H_1^{j-2}M_{j-2}
  \mc C_4^{N-j}M_{N-j-2}\\
  &\leq 72C_{4}H_0H_1^{N-2}M_{N-2}.
\end{align*}
This gives \reff{+++++a}, and hence \reff{mc1}. This completes the
proof of Lemma \ref{+Friedman}.
\end{proof}

Now starting from the smooth solution $u$, we prove the Gevrey
regularity result as follows:

\begin{prp}\label{prp4'}

Let $\delta=\max\set{{\sigma\over4},~{\sigma\over2}-{1\over6}},$ and
let $s\geq {2\over\delta}$ be a real number.  Let
$W=2\Omega=\set{(t,x,v);\:(\frac{t}{2},\frac{x}{2},\frac{v}{2})\in\Omega}.$
Suppose that $u\in C^\infty(\bar W)$ is a solution of \reff{++1.2}
where $ a(t,x,v) \in G^s(\bar \Omega), a>0$ and $F(t,x,v,q)\in
G^s(\bar \Omega\times[-M,M]).$ Then there exits a constant $R$ such
that for any $r\in[0,1]$ and any $N\in\bb{N}$, $N\geq4,$
$$
(E)_{r, N}'\quad\quad
\begin{array}{l}   \|\Phi_{\rho,N}D^\alpha
u\|_{r+n+1}+\|\Phi_{\rho,N}{\widetilde\Lambda}^{\sigma} D^\alpha
u\|_{r-{\delta\over2}+n+1}\\
\hskip 1cm \leq
\frac{R^{|\alpha|-1}}{\rho^{(s+n)(|\alpha|-3)}}\big((|\alpha|-3)!\big)^{s}
\inner{\frac{N}{\rho}}^{sr}
\end{array}
$$
holds for all $\alpha,\:~|\alpha|= N$ and all $0<\rho< 1.$ Thus,
$u\in G^s(\Omega).$
\end{prp}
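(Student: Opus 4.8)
The plan is to run the induction of Proposition~\ref{prp4} essentially verbatim, substituting for the hypothesis $f\in G^s(\bar\Omega)$ the nonlinear composition estimate of Lemma~\ref{+Friedman} in the single step that uses the equation. Since $u\in C^\infty(\bar W)$ is part of the hypothesis (it is furnished by Proposition~\ref{+smooth} when one starts from a merely $L^\infty_{loc}$ weak solution), I would prove $(E)'_{r,N}$ by induction on $N$. The first step $N=4$ is obtained, exactly as $(E)_{r,4}$, from \reff{cutoffnorm} in Lemma~\ref{3.1} and the finiteness of $\norm{u}_{H^{n+6}(W)}$, fixing $R\geq C_n\norm{u}_{H^{n+6}(W)}+1$. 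For the inductive step one assumes $(E)'_{r,N-1}$ for all $r\in[0,1]$ and runs the scheme of Lemmas~\ref{tech},~\ref{r0},~\ref{r1/3},~\ref{r1}, with Lemma~\ref{tech} and all the equation-free parts carried over unchanged.

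The equation enters only through the identity
\[
  \p\bigl(\Phi_{\rho,N}D^\alpha u\bigr)=\bigl[\p,\;\Phi_{\rho,N}D^\alpha\bigr]u+\Phi_{\rho,N}D^\alpha\bigl[F(\cdot,u(\cdot))\bigr].
\]
Accordingly, the analogue of Lemma~\ref{r0} --- the case $r=0$, i.e.\ $(E)'_{0,N}$ --- is proved by descending to $D^\beta u$ with $\abs\beta=N-1$ and invoking Lemma~\ref{3.1}, Lemma~\ref{tech} and the induction hypothesis, with no reference to the right-hand side; the estimate of the commutator $\norm{[\p,\Phi_{\rho,N}D^\alpha]u}_{-\frac{\delta}{2}+n+1}$, which corresponds to Steps~1 and~2 of the proof of Lemma~\ref{r1/3} (using Lemma~\ref{+lem9}, Lemma~\ref{3.1}, Lemma~\ref{tech} and $(E)'_{r,N-1}$), is identical; and the passage from a bound on $\norm{\p\Phi_{\rho,N}D^\alpha u}_{-\frac{\delta}{2}+n+1}$ to $(E)'_{r,N}$ for all $r\in[0,1]$ --- via the subelliptic estimate \reff{sub0}, the treatment of the $\widetilde\Lambda^\sigma$-term as in Step~4 of the proof of Lemma~\ref{r1/3}, and the bootstrap in steps of $\delta/2$ as in Lemma~\ref{r1} --- is the same.

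The one new input replaces the step in the proof of Lemma~\ref{r1/3} where $f\in G^s(\bar\Omega)$ was used: the term $\norm{\Phi_{\rho,N}D^\alpha[F(\cdot,u(\cdot))]}_{-\frac{\delta}{2}+n+1}$ is estimated by Lemma~\ref{+Friedman} with $g=u$ and $\nu=-\delta/2$, which is admissible since $\delta<1/3$ forces $-1/2<-\delta/2\leq1$. I would take $H_0$ a multiple of $\norm{u}_{C^{n+4}(\bar W)}+1$, $H_1=(4^{n+2}\mc C_4H_0)^2$, and for $\{M_j\}$ a majorant sequence of Gevrey type $\bigl((j-1)!\bigr)^{s}$ times the appropriate negative power of $\rho$, calibrated so that $H_1^{j-2}M_{j-2}$ reproduces the right side of $(E)'_{0,N-1}$. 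Then the monotonicity condition \reff{monotonicity} holds (for $s\geq1$ the factorials dominate the binomial coefficients, and the $\rho$-powers only help since $\rho<1$); the growth conditions \reff{++} hold with a suitable dimensional $\mc C_n$; condition \reff{condition1} is precisely the hypothesis $F\in G^s(\bar\Omega\times[-M,M])$; condition \reff{condition2} is the smoothness of $u$ on $\bar W$; and condition \reff{condition3} with $\nu=-\delta/2$ follows from $(E)'_{r,N-1}$ for $2\leq j\leq N-1$ and from the already-established $(E)'_{0,N}$ for $j=N$, using $\norm{\,\cdot\,}_{-\frac{\delta}{2}+n+1}\leq\norm{\,\cdot\,}_{n+1}$ (the range $N\leq n+2$ is treated directly, as in Lemma~\ref{tech}). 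The conclusion \reff{conclusion} then yields a bound on $\norm{\Phi_{\rho,N}D^\alpha[F(\cdot,u(\cdot))]}_{-\frac{\delta}{2}+n+1}$ of the same shape as the right side of \reff{step3}, and, combined with the commutator estimate and the subelliptic estimate, closes the induction for a single $R$ chosen larger than all the ($N$- and $r$-independent) constants that appeared. Finally $(E)'_{0,N}$ gives $u\in G^s(\Omega)$, exactly as Proposition~\ref{prp5} is deduced from Proposition~\ref{prp4}.

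The main obstacle is the nonlinear composition step. One must choose $\{M_j\}$ so that Lemma~\ref{+Friedman} returns exactly the Gevrey profile demanded by $(E)'$ while $\{M_j\}$ still obeys the structural hypotheses \reff{monotonicity}--\reff{++}, and --- the genuinely new difficulty over the linear case --- one must accommodate that $F$ is evaluated at $u$ itself: the top-order term $(\partial_qF)(\cdot,u)\,D^\alpha u$ of the Faa di Bruno expansion of $D^\alpha[F(\cdot,u(\cdot))]$ carries $u$-derivatives of full order $N$, so condition \reff{condition3} of Lemma~\ref{+Friedman} at $j=N$ forces $(E)'_{0,N}$ to be in hand before the lemma can be applied; this is why the case $r=0$ must be dispatched first, and without using the equation. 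The remaining book-keeping --- verifying that the Gevrey constant $R$ of $u$ may be fixed uniformly in $N$ and $r$, so that the geometric series in $R$ and the sum over derivatives of $a$ converge --- is then routine, being identical to the linear case.
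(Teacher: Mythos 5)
Your proposal follows essentially the same route as the paper's proof: induction on $N$ with the case $r=0$ dispatched first and without using the equation, the commutator term $[\mc P,\Phi_{\rho,N}D^\alpha]u$ handled exactly as in Lemma~\ref{r1/3}, the nonlinear term $\Phi_{\rho,N}D^\alpha[F(\cdot,u(\cdot))]$ estimated via Lemma~\ref{+Friedman} with $\nu=-\delta/2$, $g=u$, $H_1=R$, and a sequence $M_j$ of Gevrey type calibrated to reproduce the right side of $(E)'_{0,N-1}$, and then the subelliptic estimate and the $\delta/2$-step bootstrap to close the induction for all $r\in[0,1]$. You also correctly identify the key structural point that condition~\reff{condition3} at $j=N$ requires $(E)'_{0,N}$ to be in hand before Lemma~\ref{+Friedman} can be invoked, which is exactly why the paper establishes the $r=0$ case first.
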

\begin{rmk}
  Here the Gevrey constant $L$ of $u$ is  determined by the
Gevrey constants $B_a$ and $B_F$ of the functions $a, F$, and
depends only on $s, \sigma, n, \norm{u}_{H^{n+6}(W)}$ and
$\norm{a}_{C^{2n+2}(\Omega)}.$
\end{rmk}

\begin{proof}
We  prove the estimate $(E)_{r, N}'$ by induction on $N$.  We shall
follow the same procedure as that in  the proof of Proposition
\ref{prp4}. First, the truth of $(E)_{r,4}'$ can be deduced by the
same argument as that in  the proof of $(E)_{r,4}$ in the previous
section.

Let now $N>4$ and assume that $(E)_{r, N-1}'$ holds for any $r\in[0,
1]$.  We need to prove the truth of $(E)_{r,N}'$ for $0\leq r\leq
1.$  In the following discussion, we fix $N$  and for each
$0<\rho<1,$ define $\tilde\rho={{N-1}\over N}\rho, \:
\tilde{\tilde\rho}={{N-2}\over N}\rho.$  Let $\Phi_{\rho,N}$ be the
cutoff function which satisfies the property \reff {cutoff}.

First,  the same argument as the proof of Lemma \ref{r0} yields
\begin{equation}\label{r:0}
\|\Phi_{\rho,N}D^\alpha
u\|_{n+1}+\|\Phi_{\rho,N}{\widetilde\Lambda}^{\sigma} D^\alpha
u\|_{-{\delta\over2}+n+1}\leq \frac{ C_1
R^{|\alpha|-2}}{\rho^{(s+n)(|\alpha|-3)}}\big((|\alpha|-3)!\big)^{s},\quad
\forall\:\:0<\rho<1.
\end{equation}
Next we prove,  for all $r, 0<r\leq {\delta\over2},$
\begin{equation}\label{r-theta+}
  \|\Phi_{\rho,N}D^\alpha u\|_{r+n+1}+\|\Phi_{\rho,N}{\widetilde\Lambda}^{\sigma}
 D^\alpha u\|_{r-{\delta\over2}+n+1}
  \leq\frac{
 \mc C_{6}R^{|\alpha|-2}}{\rho^{(s+n)(|\alpha|-3)}}\big((|\alpha|-3)!\big)^{s}
     (N/\rho)^{sr}.
\end{equation}
Observe that  we need only to show the above inequality in the case
when $r={\delta\over2},$ that is
\begin{equation}\label{r-theta++}
  \|\Phi_{\rho, N}D^\alpha u\|_{{\delta\over2}+n+1}+\|\Phi_{\rho,
 N}{\widetilde\Lambda}^{\sigma} D^\alpha u\|_{n+1}
  \leq\frac{
 \mc C_{6}R^{|\alpha|-2}}{\rho^{(s+n)(|\alpha|-3)}}\big((|\alpha|-3)!\big)^{s}
     (N/\rho)^{{{s\delta}\over2}},
\end{equation}
and the truth of \reff{r-theta+} for general
$r\in]0,\,{\delta\over2}[$ follows by the interpolation inequality
\reff{interpolation}.

To prove \reff{r-theta++}, we first show the following inequality
\begin{equation}
\label{step3'}
 \|\p\Phi_{\rho,N}D^\alpha u\|_{-{\delta\over2}+n+1}
 \leq\frac{ \mc C_{7}R^{|\alpha|-2}}{\rho^{(s+n)(|\alpha|-3)}}
      \big((|\alpha|-3)!\big)^{s}(N/\rho)^{{{s\delta}\over2}}.
\end{equation}
In fact,
\begin{align*}
  \|\p\Phi_{\rho,N}D^\alpha u\|_{-{\delta\over2}+n+1}
  &\leq \|[\p,~\Phi_{\rho,N}D^\alpha]u\|_{-{\delta\over2}+n+1}
    +\|\Phi_{\rho,N}D^\alpha\p u\|_{-{\delta\over2}+n+1}\\
  &\leq \|[\p,~\Phi_{\rho,N}D^\alpha]u\|_{-{\delta\over2}+n+1}
  +\|\Phi_{\rho,N}D^\alpha[F(\cdot,u(\cdot))]\|_{-{\delta\over2}+n+1}.
\end{align*}
Since there is no nonlinear form involved in the first term  on the
right-hand  side of the above inequality,  the  same argument as in
the proof of \reff{step1} gives that
\begin{equation}\label{abcd+}
   \|[\p,~~ \Phi_{\rho,N}D^\alpha] u\|_{-{\delta\over2}+n+1}
    \leq\frac{\mc C_8 R^{|\alpha|-2}}{\rho^{(s+n)(|\alpha|-3)}}
    \big((|\alpha|-3)!\big)^{s}(N/\rho)^{{{s\delta}\over2}},
\end{equation}
Thus we need only to treat the second term
$\|\Phi_{\rho,N}D^\alpha[F(\cdot,u(\cdot))]\|_{-{\delta\over2}+n+1}$.
The smoothness of $u$ gives
\begin{equation}\label{++++100}
    \|D^ju\|_{ C^{n+3}(\bar W)}\leq
  \|u\|_{ C^{n+5}(\bar W)},
 \qquad 0
    \leq j\leq2,
\end{equation}
and by the induction hypothesis, for any $3\leq j<N$ and  any
$0<\rho<1, $
\begin{equation}\label{100}
\begin{split}
   \|\Phi_{\rho,j} D^\beta u\|_{ -{\delta\over2}+n+1}
   &\leq \| \Phi_{\rho,j}D^\beta u\|_{n+1}
   \leq \frac{
 C_{1}R^{j-2}}{\rho^{(s+n)(j-3)}}\big((j-3)!\big)^{s}\\
   &\leq\frac{
 C_{1}R^{j-2}}{\rho^{(s+n)(j-3)}}\big((j-3)!\big)^{s}
   (j/\rho)^{{{s\delta}\over2}},\qquad \forall ~\beta, \:\abs\beta=j,
\end{split}
\end{equation}
Similarly,  by \reff{r:0},  we have for any $0<\rho<1, $
\begin{equation}\label{++N}
  \|\Phi_{\rho,N} D^\alpha u\|_{ -{\delta\over2}+n+1}
  \leq\frac{
  C_{1}R^{N-2}}{\rho^{(s+n)(N-3)}}\big((N-3)!\big)^{s}
   (N/\rho)^{{{s\delta}\over2}},\qquad \forall ~\alpha,
   \:\abs\alpha=N.
\end{equation}
Since $F\in G^s(\bar \Omega\times [-M, M])$, then
\begin{equation}\label{200}
  \|D_{t,x,v}^k\partial_q^lF\|_{C^{n+2}(\bar \Omega\times
 [-M,
 M])}
  \leq  B_{F}^{k+l}\big((k-3)!\big)^{s}\big((l-3)!\big)^{s},
 \quad k,l\geq3.
\end{equation}
Define $M_j, H_0, H_1$ by setting
  \[
   H_1=R; \:\: H_0=\norm{u}_{C^{n+3}(\bar W)}+1;\:\: M_0=1; \:\:
M_j={{\big((j-1)!\big)^{s}}\over{\rho^{(s+n)(j-1)}}}
((j+2)/\rho)^{{{s\delta}\over2}},
     \:\:j\geq1.
  \]
We can choose $R$ large enough such that $H_1=R\geq (4^{n+1}B_F
H_0)^2.$ Then (\ref{++++100})-(\ref{200}) can be rewritten as
\begin{equation}\label{+a}
  \|D^ju\|_{ C^{n+3}(\bar W)}\leq H_0,\quad
 0\leq
 j\leq1,
\end{equation}
\begin{equation}\label{+b}
  \| \Phi_{\rho,j}D^\gamma u\|_{ -{\delta\over2}+n+1}
  \leq H_0H_1^{j-2}M_{j-2},\quad\forall~0<\rho<1,~\forall~\abs\gamma=j,\quad 2\leq j\leq N,
\end{equation}
\begin{equation}\label{+c}
\|D_{t,x,v}^k\partial_q^lF\|_{C^{n+2}(\bar \Omega\times [-M, M])}
  \leq  B_{F}^{k+l}M_{k-2}M_{l-2},\quad
k,l\geq2.
\end{equation}
For each $j$, note that $s\geq{2\over\delta}.$ Hence we compute
\begin{align}\label{328}
\begin{split}
  \frac{j!}{i!(j-i)!}M_iM_{j-i}
  &=\frac{j!}{i(j-i)}\big((i-1)!\big)^{s-1}\big((j-i-1)!\big)^{s-1}
     \rho^{-(s+n)(i-1)}\rho^{-(s+n)(j-i-1)}\\
  &\indent\times(i+2)^{{{s\delta}\over2}}(j-i+2)^{{{s\delta}\over2}}\rho^{-s\delta}\\
  &\leq{j!}\big((j-2)!\big)^{s-1}\rho^{-(s+n)(j-2)}
(j+2)^{{{s\delta}\over2}}(j+2)^{{{s\delta}\over2}}\rho^{-s\delta}\\
&\leq\frac{j(j+2)^{{{s\delta}\over2}}}{(j-1)^{s-1}}\big((j-1)!\big)^{s}\rho^{-(s+n)(j-1)}
(j+2)^{{{s\delta}\over2}}\rho^{-{{s\delta}\over2}}\rho^{s+n-{{s\delta}\over2}}\\
  &\leq\frac{j(j+2)^{{{s\delta}\over2}}}{(j-1)^{s-1}}M_j\\
  &\leq\widetilde C_s M_j.
\end{split}
\end{align}
In the last inequality we used the fact that $s-1\geq
1+{{s\delta}\over2},$ where $\widetilde C_s $ is a constant
depending only on $s.$  Moreover, it is easy to verify that, $
M_j\geq {\rho}^{-(s+n)(j-1)}\geq
  \rho^{-j}$ for each $j\geq 2,$ and
\begin{align*}
  \nrho^{n+2}M_{N-n-2}&=\nrho^{n+2}{{\big((N-n-3)!\big)^{s}}\over{\rho^{(s+n)(N-n-3)}}}
  ((N-n)/\rho)^{{{s\delta}\over2}}\\
  &\leq \mc C_n {{\big((N-1)!\big)^{s}}\over{\rho^{(s+n)(N-1)}}}
  ((N+2)/\rho)^{{{s\delta}\over2}}=\mc C_n M_{N-2}.
\end{align*}
Thus $\set{M_j}$ satisfies the monotonicity condition
(\ref{monotonicity}) and  the condition \reff{++}. By virtue of
(\ref{+a})-(\ref{328}), we can use Lemma \ref{+Friedman} with $\nu=
-{\delta\over2}>-{\frac 1 2} $ to obtain
\begin{align*}
  \|\Phi_{\rho,N}D^\alpha[F(\cdot,u(\cdot))]\|_{ -{\delta\over2}+n+1}
  &\leq  \mc C_{5}H_0^2H_1^{|\alpha|-2}M_{|\alpha|-2}\\
  &\leq  2\mc C_{5}\inner{1+\norm
 u_{C^{n+3}(\bar W)}^2}\frac{R^{|\alpha|-2}}{\rho^{(s+n)(|\alpha|-3)}}
    \big((|\alpha|-3)!\big)^{s}(N/\rho)^{{{s\delta}\over2}}.
\end{align*}
This along with \reff{abcd+} yields \reff{step3'}, if we choose $\mc
C_7= \mc C_8+2\mc C_{5}\inner{1+\norm u_{C^{n+3}(\bar W)}^2}$. By
virtue of \reff{step3'}, we can repeat the discussion as in Step 4
in the previous section. This gives  \reff{r-theta++}, and hence
\reff{r-theta+}.

Similarly,  we can prove that for any $r$ with ${\delta\over2}\leq
r\leq\delta$,
\[
  \|D^\alpha
  u\|_{r+n+1,\Omega_\rho}+\|{\widetilde\Lambda}^{\sigma} D^\alpha
  u\|_{r-{\delta\over2}+n+1,\Omega_\rho}\leq
  \frac{\mc C_{9}R^{|\alpha|-2}}{\rho^{(s+n)(|\alpha|-3)}}
  \big((|\alpha|-3)!\big)^{s}(N/\rho)^{sr}.
\]
Inductively, for any $m\in\Natural$ with
${m\delta\over2}<1+{\delta\over2},$ the above inequality still holds
for any $r$ with ${{(m-1)\delta}\over2}\leq r\leq
{{m\delta}\over2}.$ Hence, for  $r$ with $0\leq r\leq 1,$  we obtain
the truth of $(E)_{r,N}'$. This completes the proof of Proposition
\ref{prp4'}.
\end{proof}

%----------------------------------------


\begin{thebibliography}{99}

\bibitem{aumxy} R.Alexandre, Y.Morimoto, S.Ukai, C.-J.Xu, T.Yang,
 Uncertainty principle and kinetic equations,  {\em J. Funct. Anal.}
 {\bf 255}  (2008), 2013-2066

\bibitem{al-1} R. Alexandre, L. Desvillettes, C. Villani, B. Wennberg,
Entropy  dissipation and long-range interactions, {\em Arch.
Rational Mech. Anal.} {\bf 152} (2000) 327-355.

\bibitem{al-2} R. Alexandre, M. Safadi, Littlewood Paley decomposition
 and regularity issues in Boltzmann equation homogeneous equations. I.
Non cutoff and Maxwell cases, {\em Math. Models Methods Appl. Sci.}
{\bf 15} (2005) 907-920.

\bibitem{Bou}\label{Bou}F. Bouchut,~
Hypoelliptic regularity in kinetic equations. {\em J. Math. Pure
Appl.} {\bf 81} (2002), 1135-1159.

\bibitem{clx} H. Chen,  W.-X. Li, C.-J. Xu,
Gevrey hypoellipticity for linear and non-linear Fokker-Planck
equations, {\em J. Diff. Equ.} {\bf 246} (2009), 320-339.

\bibitem{CR} H. Chen,  L.Rodino,
General theory of PDE and Gevrey class. {\sl General theory of
partial differential equations and microlocal analysis}(Trieste
1995), Pitman Res. Notes in Math. Ser., {\bf 349}, Longman, Harlow,
6-81, (1996).

\bibitem{DZ}\label{DZ}M.Derridj,   C.Zuily,
Sur la r\'{e}gularit\'{e} Gevrey des op\'{e}rateurs de
H\"{o}rmander. {\em J. Math. Pure Appl.} {\bf 52} (1973), 309-336.

\bibitem{desv-wen1} L. Desvillettes, B. Wennberg, Smoothness of the
 solution of the spatially homogeneous Boltzmann equation without cutoff. {\em
Comm. Partial Differential Equations} {\bf 29} (2004), no. 1-2,
133--155.

\bibitem{DFT}\label{DFT} L. Desvillettes, G. Furioli and E. Terraneo,
Propagation of Gevrey regularity for solutions of Boltzmann equation
for Maxwellian molecules, {\em Trans. Amer. Math. Soc.} {\bf 361}
(2009) 1731-1747.

\bibitem{Dur} \label{Dur}M. Durand,
R\'{e}gularit\'{e} Gevrey d'une classe d'op\'{e}rateurs
hypo-elliptiques. {\em J. Math. Pure Appl.} {\bf 57} (1978),
323-360.

\bibitem{Friedman} A.Friedman,
On the Regularity of the solutions of Non-linear Elliptic and
Parabolic Systems of Partial Differential Equations. {\em J. Math.
Mech}. {\bf 7} (1958), 43-59.

\bibitem{helffer-nier} B. Helffer, F. Nier, Hypoelliptic estimates and
spectral theory for Fokker-Planck operators and Witten Laplacians.
{\em Lecture Notes in Mathematics}, {\bf 1862} Springer-Verlag,
Berlin, 2005.

\bibitem{herau-nier}F. H\'erau, F. Nier, Isotropic hypoellipticity and
trend to equilibrium for the Fokker-Planck equation with a
high-degree potential. {\em Arch. Ration. Mech. Anal.} {\bf 171}
(2004), no. 2, 151--218.

\bibitem{Kohn}  \label{Kohn}J.Kohn,
Lectures on degenerate elliptic problems. Psedodifferential
operators with applications, C.I.M.E., Bressanone 1977,
89-151(1978).

\bibitem{MX}\label{MX}Y.Morimoto, C.-J. Xu,
Hypoellipticity for a class of kinetic equations. {\em J. Math.
Kyoto Univ} {\bf 47} (2007), 129--152.

\bibitem{MX2}Y. Morimoto and C.-J. Xu, Ultra-analytic effect of Cauchy problem
for a class of kinetic equations, {\em to appear in "Journ. Diff.
Equ."}

\bibitem{MUXY1} Y.Morimoto, S.Ukai, C.-J.Xu, T.Yang, Regularity of
solutions to the spatially homogeneous Boltzmann equation without
Angular cutoff, {\em Discrete and Continuous Dynamical Systems -
Series A} {\bf 24} (2009) 187-212.

\bibitem{Ro} \label{Ro} L.Rodino,
{\sl Linear partial differential operators in Gevrey class.}  World
Scientific, Singapore, 1993.

\bibitem {Tr}\label{Treves} F.Treves,
{\sl Introduction to Pseudodifferential and Fourier Integral
Operators.} Plenum, New York, 1980.

\bibitem{ukai}S. Ukai, Local solutions in Gevrey classes to the
 nonlinear Boltzmann equation
without cutoff, {\em Japan J. Appl. Math.} {\bf 1}(1984), no. 1,
141--156.

\bibitem{villani}C. Villani, On a new class of weak solutions to the
 spatially
homogeneous Boltzmann and Landau equations, {\em Arc. Rational Mech.
Anal., } {\bf 143}, 273--307, (1998).

\bibitem{Xu} C.-J. Xu,  Nonlinear microlocal analysis. {\sl General
 theory of partial differential equations and microlocal analysis}(Trieste
1995), Pitman Res. Notes in Math. Ser., {\bf 349}, Longman, Harlow,
155-182, (1996).
\end{thebibliography}
\end{document}